\let\ORIlabel\label
\let\ORIrefstepcounter\refstepcounter
  \let\label\ORIlabel 
  \let\refstepcounter\ORIrefstepcounter
\crefname{hypothesis}{Hypothesis}{Hypotheses}
\crefname{exmp}{Example}{Examples}
\crefname{probl}{Problem}{Problems}
\title{A minimal compact description of the diversity index polytope \thanks{
\funding{The first author acknowledges support from the European Union’s Horizon 2020 research and innovation programme under the Marie Skłodowska-Curie grant agreement no. 101034253, and by the NWO Gravitation project NETWORKS under grant no. 024.002.003, and by grant OCENW.M.21.306 from the Dutch Research Council (NWO). The second author acknowledges support from the New Zealand Marsden Fund (MFP-UOC2005).}}}
\author{Martin Frohn\thanks{Maastricht University, Department of Advanced Computing Sciences, Paul Henri Spaaklaan 1, 6229 EN Maastricht, The Netherlands (\email{martin.frohn@maastrichtuniversity.nl}). Corresponding author.}
\and Kerry Manson\thanks{University of Canterbury, School of Mathematics and Statistics, Private Bag 4800, Christchurch 8140, New Zealand
  (\email{kerry.manson@pg.canterbury.ac.nz}).}
}
\newcommand{\CorrB}[1]{{\color{black} #1}}
\begin{document}

\maketitle

\begin{abstract}
A phylogenetic tree is an edge-weighted binary tree, with leaves labelled by a collection of species, that
represents the evolutionary relationships between those species. For such a tree, a phylogenetic diversity index is a function that apportions the biodiversity of the collection across its constituent species. The diversity index polytope is the convex hull of the images of phylogenetic diversity indices. We study the combinatorics of phylogenetic diversity indices to provide a minimal compact description of the diversity index polytope. Furthermore, we discuss extensions of the polytope to expand the study of biodiversity measurement.
\end{abstract}

\begin{keywords}
Polyhedral combinatorics; facets; phylogenetics; diversity indices
\end{keywords}

\begin{MSCcodes}
90C57, 52B05, 92B10
\end{MSCcodes}

\section{Introduction}\label{intro}
The study of biodiversity has played a prominent role in conservation biology for centuries~\cite{myers89,Singh02,spicer13}. The current threat of extinction in many populations~\cite{iucn06} has elevated the search for prioritization protocols to protect species with high levels of evolutionary distinctiveness, genetic information or community differentiation~\cite{wright90,crozier97,hartmann06,tucker17}. Quantitative, phylogeny-based methods can inform such priorities if they are accurate and robust. Without these qualities, prioritization protocols run the risk of being unhelpful or being subject to large changes when extinctions occur or when further resolution is added to a phylogeny. 
Such views have spurred recent research into the mathematical properties of existing measures~\cite{semple23, fischer23, DIopt1, manson24, moulton2024phylogenetic}. 
Another motivator is that these methods have been used as part of real-world conservation decision making processes for determining funding allocations, such as those undertaken by the EDGE of Existence programme~\cite{EDGEofExistence_2017}.
We consider two types, namely phylogenetic diversity~\cite{faith92} and phylogenetic diversity indices~\cite{DIopt1}. The former measures the evolutionary history shared by species. The latter derive from a desire to individualize phylogenetic diversity by seeking to share out or attribute the biodiversity represented by a phylogenetic tree to the individual species it contains. However, different diversity indices can lead to quite distinct priorities and conservation strategies, making the choice of index important. The great expense and long-term nature of conservation projects both reinforce the need to make the best possible decisions.

One peculiarity of this field is that various quantitative methods appear on an ad hoc basis, as some fairly intuitive solution to a particular biological question about a particular dataset. 
The tendency for new methods to be invented, ahead of using existing ones, has lead to a so-called `jungle of indices' for biologists to choose from. 
Some have argued that the proliferation hinders rather than helps conservation management, understanding and evaluation \cite{ricotta2005through}.
Resolving this `jungle' has become a topic of interest for those interested in phylogenetic trees from a theoretical standpoint.
For this reason we advocate applying a `properties first' strategy to the family of phylogenetic diversity indices. This parallels an approach previously used to evaluate abundance-based diversity measures, such as Hill numbers~\cite{daly2018ecological}.

In this article, we follow this strategy by establishing desirable properties of quantitative phylogeny-based methods in the form of diversity indices first. Next, we show how the combinatorial properties of diversity indices can be characterized by a polytope of polynomial-size in the real numbers. This new insight makes the study of biodiversity admissible to techniques from mathematical programming, establishing a first connection (to the best of our knowledge) between polyhedral combinatorics and the study of diversity indices.
This shifts the focus on to the strength or importance of the desirable properties, while remaining agnostic about the particular methods that arise.
We expect that this approach provides a foundation for the development of new solutions in conversation biology, from which practitioners will be able to select measures that better fit the properties they require.

We begin by defining phylogenetic trees: given a set of $n$ species $X=\{x_1,\dots,x_n\}$, a \emph{rooted $X$-tree} is a connected acyclic digraph $T=(V,E)$ with leafset $X$ and a vertex $\rho\in V$ with in-degree~$0$ and out-degree~$2$ such that there exists a (unique) directed path from $\rho$ to every leaf of $T$. We call $T$ \emph{binary} if every vertex in $V\setminus(X\cup\{\rho\})$ has in-degree~$1$ and out-degree~$2$. We shall restrict our attention to binary rooted $X$-trees throughout, examples of which are shown in Figure~\ref{fig::df1}. We call $X$ the set of \emph{taxa} and $\rho$ the \emph{root} of $T$. For $Y\subseteq X$, let $T_Y=(V_Y,E_Y)$ denote the spanning subtree of $T$ rooted at $\rho$ with leafset $Y$ and let $\ell :E\to\mathbb{R}_{\geq 0}$ be an edge length function. For a binary rooted $X$-tree $T$, we call the tuple $(T,\ell)$ a \emph{rooted phylogenetic $X$-tree}. We note that the exclusion of polytomies, i.e., non-binary branching in rooted phylogenetic $X$-trees, simplifies the measurement of biodiversity~\cite{swenson2018phylogenetic}. However, our combinatorial study of diversity indices will not be affected by the binary assumption because any polytomy can be artificially resolved by introducing edges of length zero. These resolutions do not alter diversity indices as defined in the next paragraph as long as the symmetries of induced subtrees are preserved. 

To introduce a measure of biodiversity which apportions the biodiversity of $X$ across a subset of species, we formalize the similarities of paths from the root to pairs of distinct taxa. To this end, for $v\in V$ and $e\in E$, let $P(v)$ and $P(e)$ denote the directed path from root $\rho$ to (and including) vertex $v$ and edge $e$, respectively, and let $|P(v)|$ and $|P(e)|$ denote the number of edges in $P(v)$ and $P(e)$, respectively. Moreover, for $v\in V$, $v\neq\rho$, we denote $T[v]$ as the maximal subtree of $T$, rooted at $v$, obtained by deleting the edge $(u,v)\in E$. We call $T[v]$ a \emph{pendant subtree} of $T$ and denote $X[v]\subseteq X$ as the leafset of $T[v]$. For a pendant subtree $H$ of $T$, we call $T$ \emph{$H$-balanced} if there exists a maximal integer $p\geq 1$, $i(1),\dots,i(p)\in\{1,\dots,n\}$ and a collection $\left\{T\left[v_{i(1)}\right],\dots,T\left[v_{i(p)}\right]\right\}$ of pendant subtrees of $T$ such that
\begin{enumerate}
\item $\left|P\left(v_{i(1)}\right)\right|=\left|P\left(v_{i(2)}\right)\right|=\dots =\left|P\left(v_{i(p)}\right)\right|$,
\item for $q\in\{1,\dots,p\}$, pendant subtrees $T\left[v_{i(q)}\right]$ and $H$ are \emph{isomorphic}, i.e., there exists a graph isomorphism $\phi_{i(q)}$ from the vertices of $T\left[v_{i(q)}\right]$ to the vertices of $H$ which is the identity map when restricted to vertices $X$.
\end{enumerate}
If $T$ is $H$-balanced for the singleton graph $H$, then we simply say that $T$ is \emph{balanced}. For example, the pendant subtree $T'[v_4]$ on the right in Figure~\ref{fig::df1} is $H$-balanced for the rooted subtree $H$ shown in the same figure. The subtrees $T'[v_1]$ and $T'[v_6]$ are balanced. 
\begin{figure}[t]
\centering
\includegraphics[scale=0.425]{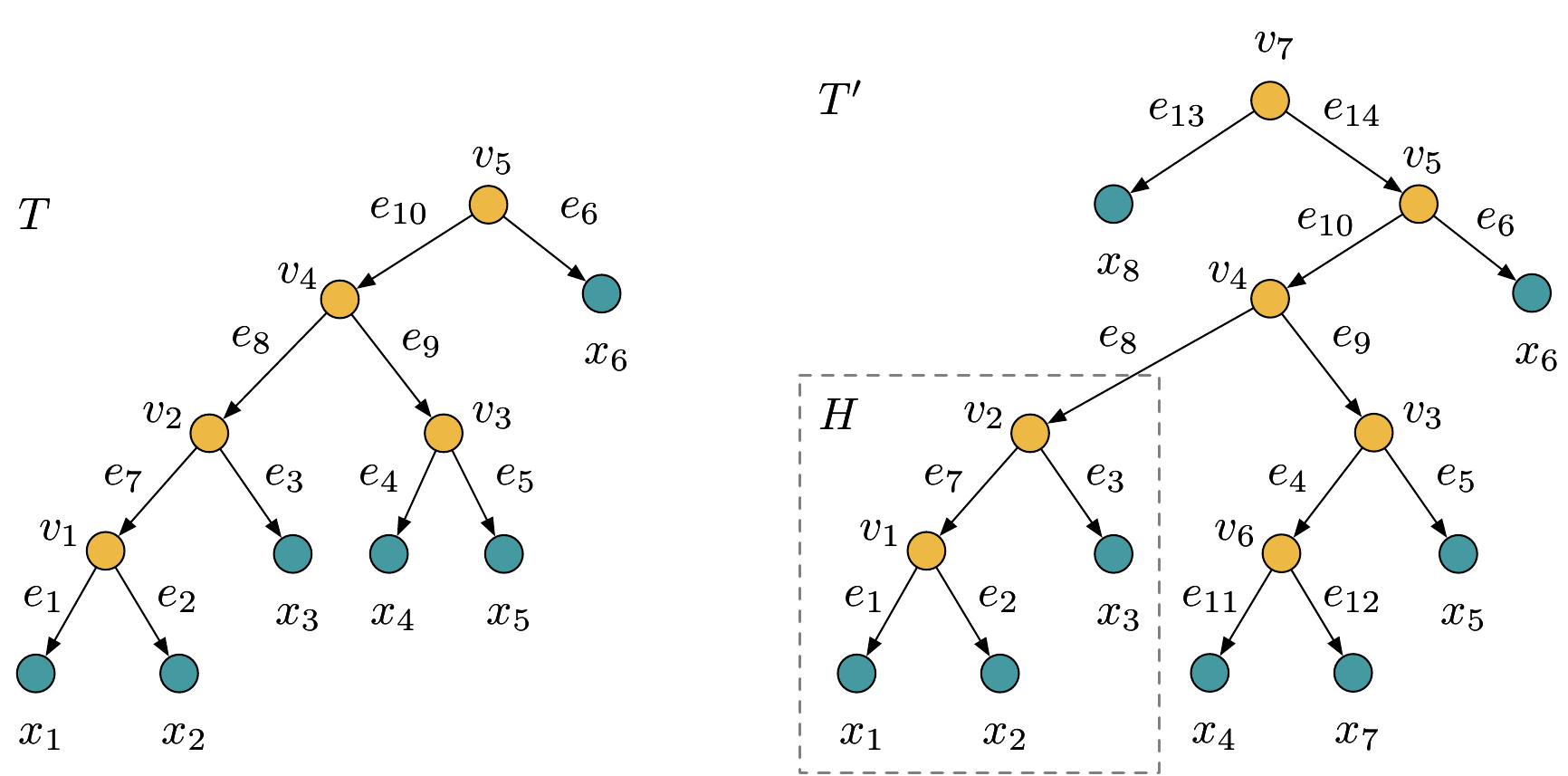}
\caption{On the left (right) a rooted $X$-tree $T$ ($X'$-tree $T'$) for $X=\{x_1,x_2,\dots,x_6\}$ ($X'=\{x_1,x_2,\dots,x_8\}$) with edges $\{e_1,e_2,\dots,e_{10}\}$ ($\{e_1,e_2,\dots,e_{14}\}$), internal vertices $\{v_1,v_2,\dots,v_5\}$ ($\{v_1,v_2,\dots,v_7\}$) and root $v_5$ ($v_7$). $H$ is a pendant subtree of $T'$.}\label{fig::df1}
\end{figure}
Observe that the notion of $H$-balanced subtrees captures local subtree structures $H$ whose replacement by singleton vertices in $T$ yields a balanced subtree of $T$ with $p$ leaves. For example, on the right in Figure~\ref{fig::df1}, \CorrB{replacing the pendant subtrees $T'[v_2]$ and $T'[v_3]$ isomorphic to $H$ by their root yields the balanced subtree rooted in $v_4$ with edge set $\{e_8,e_9\}$. Here, the replacement of two pendant subtrees by their roots is maximal for the choice of $H$.} Then, for a rooted phylogenetic $X$-tree $\hat{T}=(T,\ell)$, we call the linear function $\varphi_{\hat{T}}:X\to\mathbb{R}_{\geq 0}^{n}$ defined by
\begin{align*}
\varphi_{\hat{T}}(x)=\sum_{e\in E}\gamma(x,e)\ell(e)
\end{align*}
a \emph{diversity index} of $\hat{T}$ where, for $x\in X$, $e\in E$, the $\gamma(x,e)$ are non-negative real numbers under the following constraints:
\begin{flalign}
&\text{Convexity conditions:}\label{div::1}\\
&~\sum_{x\in X}\gamma(x,e)=1~~~~~~~~~~~~~~~~~~\,\forall\,e\in E,\nonumber\\
&\text{Descent conditions:}\label{div::2}\\
&\,~~~~~~~\gamma(x,e)=0~~~~~~~~~~~~~~~~~~\,\forall\,x\in X,e\in E\setminus P(x),\nonumber\\
&\text{Neutrality conditions:}\label{div::3}\\
&\begin{aligned}\begin{cases}
\gamma(x^1,e_1)&=\gamma(x^2,e_2)~~~~~~~~~\forall\,v_1,v_2\in V,\,x^i\in X[v_i],\,e_i=(u_i,v_i)\in E\\
&~~~~~~~~~~~~~~~~~~~~~~~~~~~\text{such that }T[v_1]\cong T[v_2],~\phi_1(x^1)=\phi_2(x^2),\\
\gamma(x^1,e)&=\gamma(x^2,e)~~~~~~~~~~\,\forall\,e=(u,v)\in E,\,x^1,x^2\in X[v],\,T[v]~\text{balanced}.
\end{cases}\end{aligned}\nonumber
\end{flalign}
For $i\in\{1,\dots,n\}$, we call $\varphi_{\hat{T}}(x_i)$ the \emph{diversity index score} of taxon~$x_i$. 
\CorrB{\begin{definition}
We call the union of images of all diversity indices for taxa $X$ the \emph{diversity index polytope} and denote it by $\mathcal{P}(T,\ell)$.
\end{definition}}
\CorrB{Observe that $\mathcal{P}(T,\ell)$ is a subset of $\mathbb{R}_{\geq 0}^n$ and} a polytope because the domain $X$ of diversity indices $\varphi_{\hat{T}}$ is bounded, conditions~\eqref{div::1} and~\eqref{div::2} are invariant under convex combinations and conditions~\eqref{div::3} are agnostic with respect to the choice of values for numbers $\gamma(x,e)$. In addition, conditions~\eqref{div::3} ensure that $H$-balanced pendant subtrees are treated by any diversity index like balanced pendant subtrees. The following example illustrates the restriction to the affine subspace defined by conditions~\eqref{div::1} to~\eqref{div::3}:
\begin{exmp}
\label{ex::def1}
Consider the rooted $X$-tree $T$ on the left in Figure~\ref{fig::df1}, any edge length function $\ell$ on $E(T)$ and any diversity index $\varphi_{(T,\ell)}$. Then, we have $\gamma(x_i,e_i)=1$ and $\gamma(x_j,e_i)=0$ for all $i,j\in\{1,\dots,6\},i\neq j$ due to the convexity and descent condition. Moreover, $x_1,x_2\in X[v_1]$, $x_4,x_5\in X[v_3]$ and $T[v_1],T[v_3]$ are balanced, i.e., $$\gamma(x_1,e_7)=\gamma(x_2,e_7)=\frac{1}{2}~~~\text{and}~~~\gamma(x_4,e_9)=\gamma(x_5,e_9)=\frac{1}{2}$$ by all three conditions~\eqref{div::1},~\eqref{div::2} and~\eqref{div::3} together. In this case, we say that the pairs $(x_1,x_2)$ and $(x_4,x_5)$ each satisfy a shared neutrality condition.
\end{exmp}
While $\mathcal{P}(T,\ell)$ contains vastly different diversity index scores, any particular choice of a diversity index can be informative under additional assumptions. For example, choosing numbers $\gamma(x,e)$, for $x\in X$, $e\in E$, such that
\begin{align*}
\gamma(x,(u,v))&=\begin{cases}\frac{1}{|X[v]|}~&\text{if }x\in X,\,(u,v)\in P(x),\\
0~&\text{otherwise.}
\end{cases}
\end{align*}
defines the \emph{Fair Proportion Index}~\cite{fischer23}. This index is defined based on the assumption that all species with a common ancestor $v$ should be treated uniformly. Observe that, for $(u,v)\in E$, the convexity condition holds:
\begin{align*}
\sum_{x\in X}\gamma(x,(u,v))=\sum_{x\in X,\,(u,v)\in P(x)}\frac{1}{|X[v]|}=\sum_{x\in X[v]}\frac{1}{|X[v]|}=1.
\end{align*}
Moreover, the descent conditions hold by definition. In addition, for $i\in\{1,2\}$, $v_i\in V$, $x^i\in X[v_i]$, $e_i=(u_i,v_i)\in E$ such that $T[v_1]\cong T[v_2]$ we know that $|X[v_1]|=|X[v_2]|$, i.e.,
\begin{align*}
\gamma(x^1,e_1)=\frac{1}{|X[v_1]|}=\frac{1}{|X[v_2]|}=\gamma(x^2,e_2)
\end{align*}
and for $e=(u,v)\in E$, $x^1,x^2\in X[v]$ we have
\begin{align*}
\gamma(x^1,e)=\frac{1}{|X[v]|}=\gamma(x^2,e).
\end{align*}
Thus, the neutrality conditions hold and therefore the Fair Proportion Index is a diversity index.

Our primary motivation to study the diversity index polytope is its usefulness for conservation projects. However, since we take a purely combinatorial approach, extensions of the diversity index polytope by adding further desired properties to the established conditions~\eqref{div::1} to~\eqref{div::3} are of interest, too. We mention this aspect briefly in the last section and do not study it in this article but point out that the analysis of additional diversity index properties can make use of the results in this article to derive more complex polytopes based on diversity indices. We will see that the diversity index polytope does not have a large number of facets because of the independence conditions~\eqref{div::2} on subtree structures. By introducing the notion of a \emph{diversity ranking}~\cite{fischer23}, i.e., a partial order on the taxa to determine which species contribute more to the biodiversity of a population (in the face of extinction events), more dependencies between the vertices of $\mathcal{P}(T,\ell)$ arise. Thus, this article provides the necessary ground work to pose questions of interest to the broader operations research community.

Before we begin our study of the polytope $\mathcal{P}(T,\ell)$ we familiarize the reader with a classic approach from polyhedral combinatorics which we will follow: a minimal compact description of $\mathcal{P}(T,\ell)$ is a linear inequality-system such that a solution to the system encodes the diversity index scores of a particular diversity index if and only if the solution is feasible. This means, from a geometric point of view we want to construct a linear inequality-system which corresponds to the description of $\mathcal{P}(T,\ell)$ as the intersection of halfspaces. If an inequality in this system defines a face of $\mathcal{P}(T,\ell)$ of maximal dimension, it is called facet-defining. Hence, after constructing the system we prove that we have included all facet-defining inequalities for $\mathcal{P}(T,\ell)$ in the system. The key insight to get to this main result will be Proposition~\ref{prop::basis} by providing a basis in a vector space embedding of $\mathcal{P}(T,\ell)$, allowing us to pairwise compare all linear independence relations between vectors of diversity index scores. Intuitively, the basis vectors we will construct look as follows: consider the phylogenetic $X$-tree on the left in Figure~\ref{fig::df1}. Choose any taxon, say $x_1$, and maximize its diversity index score $s_1$, i.e., $s_1=\ell(e_1)+(\ell(e_7)+\ell(e_8)+\ell(e_{10}))/2$. Then, consider the forest we obtain by removing path $P(x_1)$ and recursively maximize the diversity index score of taxa in the connected components of the forest without violating any of the conditions~\eqref{div::1} to~\eqref{div::3}. The resulting vector of diversity index scores will form one of our desired basis vectors for $\mathcal{P}(T,\ell)$. However, we have to address two obstacles first to eventually arrive at a basis construction. On the one hand, diversity index scores are not a local property of a phylogenetic $X$-tree because they can be defined by large weighted sums of edge lengths offering many degrees of freedom in the coefficients $\gamma(x,e)$. We will localize the calculation of diversity index scores by enforcing consistency conditions~\cite{DIopt1} during our analysis. On the other hand, the symmetries introduced by neutrality conditions~\eqref{div::3} can lower the dimension of $\mathcal{P}(T,\ell)$ significantly in any vector space embedding. Hence, we parameterize them by the notion of a set of independent edges which we introduce in the next section to provide explicit formulas for the basis vector calculation we hinted at.

In Section~\ref{sec2} we introduce some notation and background information on phylogenetic diversity indices that will prove useful to describe $\mathcal{P}(T,\ell)$. In Section~\ref{sec4}, we investigate the combinatorial properties of diversity indices to prove a minimal compact description of $\mathcal{P}(T,\ell)$. In Section~\ref{sec5} we discuss extensions of $\mathcal{P}(T,\ell)$ and further research directions.

\section{Notation and background}\label{sec2}
Let $T$ be a rooted $X$-tree. We write $V_{\text{int}}(T)$, $V(T)$ and $E(T)$ to refer to the set of interior vertices, all vertices and all edges of $T$, respectively. 
For an edge length function $\ell :E(T)\to\mathbb{R}_{\geq 0}$, to simplify our notation, we denote any restriction of $\ell$ to subsets of $E(T)$ by $\ell$, too. For $s\in \mathcal{P}(T,\ell)$, we can write $s=\Gamma_Tl$ for a suitable matrix $\Gamma_T\in\mathbb{R}^{n\times|E(T)|}$ of real numbers $\gamma(x,e)$ and a vector $l\in\mathbb{R}^{|E(T)|}$ of edge lengths. Moreover, $\Gamma_T(e)$ denotes the column vector of $\Gamma_T$ corresponding to edge $e\in E(T)$. For $i\in\{1,\dots,n\}$, we also refer to the diversity index score $s_i$ as an \emph{allocation of edge lengths} to taxon $x_i$. Hence, a diversity index can be viewed as an allocation of the total edge lengths to the leaves. Our approach to characterize $\mathcal{P}(T,\ell)$ will involve maximizing and minimizing these scores for either individual leaves or for sets of leaves.

Then, to localize the allocation of edge lengths, Manson and Steel~\cite{DIopt1} introduced the notion of consistency for diversity indices: let $\hat{T}=(T,\ell)$ be a rooted phylogenetic $X$-tree and let $\varphi_{\hat{T}}$ be a diversity index of $\hat{T}$. Then, we call $\varphi_{\hat{T}}$ \emph{consistent} if and only if for all $v\in V_{\text{int}}(T)$, $e_1,e_2\in P(v)$ there exists $k\in\mathbb{R}_{\geq 0}$ such that
\begin{align}\label{def::consistency}
\sum_{x\in X[w]}\gamma(x,e_1)=k\cdot\sum_{x\in X[w]}\gamma(x,e_2)~&~&\forall\,(v,w)\in E(T).
\end{align}
We call the constants~$k$ the \emph{consistency constants} of $\varphi_{\hat{T}}$ and condition~\eqref{def::consistency} the \emph{consistency conditions}. Notice that the consistency conditions are redundant for $e_1=e_2$ because the corresponding consistency constant $k=1$ does not \CorrB{constrain} the choice of numbers $\gamma(x,e_1)$. We illustrate the consistency conditions and the dependency of consistency constants $k$ on vertices $v\in V_{\text{int}}(T)$ and edges in $P(v)$ in the following example:
\begin{exmp}\label{ex::consistency}
Consider the rooted $X$-tree $T$ on the left in Figure~\ref{fig::df1} and any edge length function $\ell$ on $E(T)$. Let $\hat{T}=(T,\ell)$ and let $\varphi_{\hat{T}}$ be a consistent diversity index of $\hat{T}$. Then, continuing Example~\ref{ex::def1},
\begin{align*}
\Gamma_T=\left(\begin{array}{cccccccccc}
1 & 0 & 0 & 0 & 0 & 0 & \frac{1}{2} & \gamma(x_1,e_8) & 0 & \gamma(x_1,e_{10})\\
0 & 1 & 0 & 0 & 0 & 0 & \frac{1}{2} & \gamma(x_2,e_8) & 0 & \gamma(x_2,e_{10})\\
0 & 0 & 1 & 0 & 0 & 0 & 0 & \gamma(x_3,e_8) & 0 & \gamma(x_3,e_{10})\\
0 & 0 & 0 & 1 & 0 & 0 & 0 & 0 & \frac{1}{2} & \gamma(x_4,e_{10})\\
0 & 0 & 0 & 0 & 1 & 0 & 0 & 0 & \frac{1}{2} & \gamma(x_5,e_{10})\\
0 & 0 & 0 & 0 & 0 & 1 & 0 & 0 & 0 & 0
\end{array}\right)
\end{align*}
is the matrix of numbers $\gamma(x,e)$ defining $\varphi_{\hat{T}}$. Observe that $\varphi_{\hat{T}}$ is not consistent if we choose $\Gamma_T(e_8)=(1/3,1/3,1/3,0,0,0)$ and $\Gamma_T(e_{10})=(1/8,1/8,1/2,1/8,1/8,0)$ because $$\gamma(x_1,e_8)+\gamma(x_2,e_8)=\frac{2}{3}=\frac{8}{3}\cdot\frac{1}{4}=\frac{8}{3}\cdot\left(\gamma(x_1,e_{10})+\gamma(x_2,e_{10})\right),$$ but $$\gamma(x_3,e_8)=\frac{1}{3}=\frac{2}{3}\cdot\frac{1}{2}=\frac{2}{3}\cdot\gamma(x_3,e_{10}).$$
If instead we choose $\Gamma_T(e_{10})=(1/7,1/7,1/7,2/7,2/7,0)$, then
\begin{align*}
\gamma(x_1,e_8)+\gamma(x_2,e_8)&=\frac{2}{3}=\frac{7}{3}\cdot\frac{2}{7}=\frac{7}{3}\cdot\left(\gamma(x_1,e_{10})+\gamma(x_2,e_{10})\right),\\
\gamma(x_3,e_8)&=\frac{1}{3}=\frac{7}{3}\cdot\frac{1}{7}=\frac{7}{3}\cdot\gamma(x_3,e_{10}),
\end{align*}
i.e., we find consistency constant $k=7/3$ for vertex $v_2$ and edges $e_8,e_{10}$. Furthermore,
\begin{align*}
\gamma(x_i,e_7)&=\frac{1}{2}=\frac{3}{2}\cdot\frac{1}{3}=\frac{3}{2}\cdot\gamma(x_i,e_{8})~&~&\forall\,i\in\{1,2\},\\
\gamma(x_i,e_7)&=\frac{1}{2}=\frac{7}{2}\cdot\frac{1}{7}=\frac{7}{2}\cdot\gamma(x_i,e_{10})~&~&\forall\,i\in\{1,2\},\\
\gamma(x_i,e_9)&=\frac{1}{2}=\frac{7}{4}\cdot\frac{2}{7}=\frac{7}{4}\cdot\gamma(x_i,e_{10})~&~&\forall\,i\in\{4,5\},
\end{align*}
which means $k=3/2$ for $v_1,e_7,e_8$, $k=7/2$ for $v_1,e_7,e_{10}$ and $k=7/4$ for $v_3,e_9,e_{10}$. Hence, $\varphi_{\hat{T}}$ is consistent for the second choice of $\Gamma_T(e_{10})$. Observe that imposing the consistency condition equations reduces the number of free choices for entries of $\Gamma_T$ by 4. This means, together with the convexity condition, we reduce the remaining $8$ free choices of numbers $\gamma(x,e)$ to define $\Gamma_T$ to $2$.
\end{exmp}

We call the number of free choices we can make to define a consistent diversity index of a rooted phylogenetic $X$-tree $\hat{T}=(T,\ell)$ the \emph{degrees of freedom} $d_{\hat{T}}$ of $\hat{T}$. In Example~\ref{ex::consistency} we observed that $d_{\hat{T}}=2$. We generalize this observation in the following proposition by characterizing matrices $\Gamma_T$ for consistent diversity indices. \CorrB{To this end, we consider the equivalence relation on $E(T)$ induced by the neutrality conditions which invoke the isomorphisms $T[v_1]\cong T[v_2]$.} Let $E_1(T),\dots,E_p(T)\subset E(T)$ be the resulting equivalence classes and associate with them the definitions in Table~\ref{tab::symbols1}: we call $E_f$ a \emph{set of independent edges}. \CorrB{For example, for $T'$ in Figure~\ref{fig::df1}, $\{e_1,e_2,e_3,e_5,e_6,e_{11},e_{12},e_{13}\}$, $\{e_4,e_7\}$, $\{e_8,e_9\}$, $\{e_{10}\}$, $\{e_{14}\}$ are the equivalence classes on $E(T')$ and both $\{e_8,e_{14}\}$ and $\{e_9,e_{14}\}$ are sets of independent edges.}

\begin{table*}[!t]
\centering
    \begin{tabular}{ll}
     \toprule
     	symbol & definition\\ \midrule \\ [-1em]
	$E_c$ & a set of representatives $(u,v)\in E(T)$ of the equivalence classes\\ \\ [-1em] 
	& $E_1(T),\dots,E_p(T)$ such that there exists a pendant subtree $H$ of $T$\\ \\ [-1em] 
	& for which $T[v]$ is $H$-balanced.\\ \\ [-1em]
	$E_f$ & a set of representatives $(u,v)\in E(T)$ of the equivalence classes\\ \\ [-1em] 
	& $E_1(T),\dots,E_p(T)$ such that there does not exist a pendant subtree $H$\\ \\ [-1em] 
	& of $T$ for which $T[v]$ is $H$-balanced.\\ \\ [-1em]
	$\mathcal{C}(T)$ & the set of all subsets $E_c$ in $T$\\ \\ [-1em]
	$U_c(T)$ & the union of all edges $e\in E(T)$ with $e\in E_c$ for some subset $E_c$\\ \\ [-1em]
	$\mathcal{F}(T)$ & the set of all subsets $E_f$ in $T$\\ \\ [-1em]
	$U_f(T)$ & the union of all edges $e\in E(T)$ with $e\in E_f$ for some subset $E_f$\\
	\bottomrule \\
    \end{tabular}
    \caption{A collection of definitions associated with the equivalence classes $E_1(T),\dots, E_p(T)$ of a phylogenetic $X$-tree $(T,\ell)$.}
    \label{tab::symbols1}
\end{table*}

\begin{proposition}\label{prop::degfree}
Let $\hat{T}=(T,\ell)$ be a rooted phylogenetic $X$-tree. Then, the degrees of freedom $d_{\hat{T}}$ is the size of a set of independent edges of $T$.
\end{proposition}
\begin{proof}
Let $E_f\in\mathcal{F}(T)$. First, assume there exists $e=(u,v)\in E_f$ such that $\{(u,v)\}=E_q(T)$ for some $q\in\{1,\dots,p\}$. By definition of equivalence class $E_q(T)$, for all edges $e'=(u',v')\in E(T)$ distinct from $(u,v)$ we have \CorrB{$T[v]\ncong T[v']$}. Hence, the neutrality conditions do not induce \CorrB{a dependence} between columns $\Gamma_T(e)$ and $\Gamma_T(e')$. Moreover, we show that for a consistent diversity index $\varphi_{\hat{T}}$ with a matrix of coefficients $\gamma(x,e)$ denoted by $\Gamma_T$, there exists exactly one free choice of entries in $\Gamma_T(e)$. 

To this end, consider $e_1=(v,w_1)$, $e_2=(v,w_2)\in E(T)$. Then, there exist consistency constants $k_1$ and $k_2$ for $w_1,e_1,e$ and $w_2,e_2,e$, respectively: \CorrB{~
\begin{align*}
\sum_{x\in X[w_{1j}]}\gamma(x,e_1)&=k_1\cdot\sum_{x\in X[w_{1j}]}\gamma(x,e)~&~&\forall\,(w_1,w_{1j})\in E(T),\\
\sum_{x\in X[w_{2j}]}\gamma(x,e_2)&=k_2\cdot\sum_{x\in X[w_{2j}]}\gamma(x,e)~&~&\forall\,(w_2,w_{2j})\in E(T).
\end{align*}
Hence,}
\begin{align}
\sum_{x\in X[v]}\gamma(x,e)&=\sum_{i,j\in\{1,2\}}\sum_{x\in X[w_{ij}]}\gamma(x,e)=\sum_{i,j\in\{1,2\}}\frac{1}{k_i}\gamma(x,e_i)\nonumber\\
&=\frac{1}{k_1}\sum_{x\in X[w_1]}\gamma(x,e_1)+\frac{1}{k_2}\sum_{x\in X[w_2]}\gamma(x,e_2).\label{eq::decompG}
\end{align}
The same argument can be extended to all edges in $T[w_1]$ and $T[w_2]$. \CorrB{At some point in this recursion both $T[w_1]$ and $T[w_2]$ are balanced. In this case, for $n_1=|X[w_1]|$ and $n_2=|X[w_2]|$, the height of $T[w_1]$ and $T[w_2]$ are $\log n_1$ and $\log n_2$, respectively. Then, the number of free choices for coefficients $\gamma(x,e)$, $x\in X[v]$, is reduced by $n_i-1$ consistency conditions for each $i\in\{1,2\}$. This means,} there exists exactly one free choice of entries in $\Gamma_T(e)$ independent of $\Gamma_T(e')$ for all $e'$ in $T[w_1]$. The same conclusion can be drawn for $T[w_2]$. \CorrB{Combining both yields $n_1+n_2-2$ consistency conditions reducing the free choices of entries in $\Gamma_T(e)$.} Hence, by the convexity condition there exists \CorrB{at most $n-(n_1+n_2-2)-1=1$} free choice of entries in $\Gamma_T(e)$ independent of $\Gamma_T(e')$ for all $e'$ in $T[v]$. \CorrB{Following the recursive decomposition~\eqref{eq::decompG} back up to the initial choice of edge $e$ this conclusion persists because of the consistency conditions for both $w_1,e_1,e$ and $w_2,e_2,e$. Indeed, by the definition of $E_f$, for $e=(u,v)\in E_f$, no further conditions are imposed on $\gamma(x,e)$, $x\in X[v]$. In other words, exactly one free choice of entries in $\Gamma_T(e)$ is independent of $\Gamma_T(e')$ for all $e'$ in $T[v]$. In total,} we conclude that if all edges in $E_f$ are unique representatives of their respective equivalence classes $E_1(T),\dots,E_p(T)$, then $d_{\hat{T}}=p=|E_f|$. \CorrB{If $|E_q(T)|\geq 2$ for any $q\in\{1,\dots,p\}$, no further neutrality conditions are present for pairs of coefficients appearing in equation~\eqref{eq::decompG}. Thus, our claim follows.}
\end{proof}

We call $E_c\in\mathcal{C}(T)$ a \emph{set of dependent edges} (see Table~\ref{tab::symbols1}). Observe from the proof of Proposition~\ref{prop::degfree} that for all $E_f\in\mathcal{F}(T)$ there exists $E_c\in\mathcal{C}(T)$ such that, for $e=(u,v)\in E_c$, either $T[v]$ is balanced or $\Gamma_T(e)$ is uniquely defined by $\Gamma_T(e')$ for some $e'\in E(T[v])\cap E_f$ because only edges in $U_f(T)$ offer free choices to define $\Gamma_T$ under consistency conditions. The same holds for subsets of~$E_f$ and suitable subsets of~$E_c$. In this case we call (subsets of)~$E_c$ \emph{dependent on} (subsets of)~$E_f$. Throughout this article we will use sets $E_f\in\mathcal{F}(T)$ to define matrices $\Gamma_T$ and only consider sets $E_c\in\mathcal{C}(T)$ dependent on $E_f$ when calculating an allocation of edge lengths to taxa.

Proposition~\ref{prop::degfree} also shows that consistency conditions do not localize all neutrality conditions to their respective edges. \CorrB{In other words, there can exist pairs of edges with a shared neutrality condition which is not affected by the imposition of consistency conditions.} Specifically, \CorrB{the edge length allocation for} edges $(u,v)$ and $(u',v')$ which satisfy a shared neutrality condition but for which $T[v]$ and $T[v']$ do not share any vertices or edges cannot \CorrB{be restricted to either $(u,v)$ or $(u',v')$. For example, edges $e_8$ and $e_9$ in $T'$ in Figure~\ref{fig::df1} share a neutrality condition and appear in consistency conditions when paired with edges from subtrees $T'[v_2]$ or $T'[v_3]$ in the same figure, respectively. However, both $e_8$ or $e_9$ can be chosen to determine $\Gamma_T(e_8)$ and $\Gamma_T(e_9)$ for a consistent diversity index.} Another aspect of describing the neutrality conditions of consistent diversity indices concerns not only localizing them to sets of independent edges but taxa, too. This is of particular interest when we will calculate diversity index scores. To this end, let $\hat{X}_{\hat{x},e}\subseteq X$, $\hat{x}\in \hat{X}_{\hat{x},e}$, $e\in E(T)$, denote the set of maximum cardinality of taxa with constant $\gamma(\hat{x},e)$ induced by neutrality and consistency conditions. 
\begin{exmp}\label{ex::consistent}
Consider the rooted $X$-tree on the left in Figure~\ref{fig::df1}. We have $\hat{X}_{x_1,e}=\{x_1,x_2\}$ for all edges $e\in\{e_7,e_8,e_{10}\}$ because (i) taxa $x_1$ and $x_2$ satisfy a neutrality condition with respect to edge $e_7$; (ii) edges $e_8$ and $e_{10}$ satisfy consistency conditions for $v_1, e_7, e_8$ and $v_1, e_7, e_{10}$, respectively. In this example the dependency of $\hat{X}_{x_1,e}$ on edges $e\in P(x_1)$ is redundant. However, in general this is not the case. For example, on the right in Figure~\ref{fig::df1}, we have $\hat{X}_{x_1,e_7}=\hat{X}_{x_1,e_8}=\{x_1,x_2\}$ and $\hat{X}_{x_1,e_{10}}=\hat{X}_{x_1,e_{14}}=\{x_1,x_2,x_4,x_7\}$ because $e_{10}$ satisfies neutrality conditions 
\begin{align*}
\gamma(x_1,e_{10})=\gamma(x_4,e_{10}),~\gamma(x_2,e_{10})=\gamma(x_7,e_{10})
\end{align*}
or
\begin{align*}
\gamma(x_1,e_{10})=\gamma(x_7,e_{10}),~\gamma(x_2,e_{10})=\gamma(x_4,e_{10})
\end{align*}
and consistency conditions for $v_2,e_8,e_{10}$ and $v_2,e_9,e_{10}$, i.e., 
\begin{align*}
\gamma(x_1,e_{10})=\gamma(x_2,e_{10}),~\gamma(x_4,e_{10})=\gamma(x_7,e_{10}).
\end{align*}
\end{exmp}

\begin{table*}[!t]
\centering
    \begin{tabular}{ll}
     \toprule
     	symbol & definition\\ \midrule \\ [-1em]
	$T(Y)$ & the minimum spanning tree $T[v]$ in $T$ on leafset $Y$ \\ \\ [-1em] 
	& joined with the parent $u$ of $v$ and edge $(u,v)$ if $u$ exists\\ \\ [-1em] 
	$F(Y-Z)$ & the forest we obtain from $T(Y)$ by removing edges \\ \\ [-1em]
	& in paths $P(z)$, $z\in Z$, and deleting resulting singletons\\ \\ [-1em]
	$T_j(Y-x_i)$ & the connected component of $F(Y-\{x_i\})$ which has \\ \\ [-1em]
	& $x_j$ among its leaves\\ \\ [-1em] 
	$E_{f|i}$ & the constriction of $E_f$ to edges in $F(X-\{x_i\})$ \\
	\bottomrule \\
    \end{tabular}
    \caption{A collection of definitions associated with subsets $Z\subseteq Y\subseteq X$ and taxa $x_i,x_j$, $i\neq j$ of a phylogenetic $X$-tree $(T,\ell)$ and $E_f\in\mathcal{F}(T)$.}
    \label{tab::symbols2}
\end{table*}

We will use the set of definitions in Table~\ref{tab::symbols2} to study the allocation of edge lengths to taxa in sets $\hat{X}_{\hat{x},e}$. Specifically, we will decompose $T$ to recursively allocate lengths of edges $e$ to taxa in a set $Y\subseteq X$, $\hat{x}\in Y$, with respect to the neutrality and consistency conditions as encoded by $\hat{X}_{\hat{x},e}$. Considering Table~\ref{tab::symbols2} with a slight abuse of notation we write $F(Y-z)$ whenever $Z=\{z\}$. Observe that connected components $T_j(Y-x_i)$ form an equivalence class in $Y\setminus\{x_i\}$. Then, as a first ingredient for our recursive decomposition approach, for $x_i\in X$, $Y\subseteq X$, we define $\mathcal{R}(x_i,Y)$ as the following set of maximum cardinality:
\begin{enumerate}
\item for each equivalence class $T_j(Y-x_i)$, the set $\mathcal{R}(x_i,Y)$ contains at most one representative,
\item the representatives in $\mathcal{R}(x_i,Y)$ are chosen to pairwise satisfy a maximal number of neutrality and consistency conditions.
\end{enumerate}
Observe that $\mathcal{R}(x_i,Y)$ is not uniquely defined:
\begin{exmp}\label{ex::Ixi}
For the rooted $X'$-tree on the right in Figure~\ref{fig::df1}, we can choose $\mathcal{R}(x_1,X)=\{x_2,x_3,x_j,x_6,x_8\}$ for $j\in\{4,7\}$ to satisfy both properties~1 and~2. \CorrB{Observe that both} $x_4$ and $x_7$ are representatives of $T_5(Y-x_1)$ and share a neutrality condition with $x_2$ for edges $e_8$ and $e_9$. \CorrB{In contrast,} taxon~$x_5$ can not appear in $\mathcal{R}(x_1,X)$ because despite being a representative of equivalence class $T_5(Y-x_1)$, $x_5$ does not share any neutrality condition with the unique representative $x_2$ of $T_2(Y-x_1)$.
\end{exmp}

We will make use of set $\mathcal{R}(x_i,Y)$ at the end of this section when discussing the allocation of edges in $E_{f|i}$ to taxa in $Y\setminus\{x_i\}$ without violating neutrality and consistency conditions. Indeed, if we want to maximize the allocation of edge lengths to taxa in $Y$ under all conditions for diversity indices, starting with an allocation of edge lengths to taxon $x_i$, then property~2 excludes choices of representatives which obstruct maximality. 

As a second ingredient we provide useful formulas for diversity index scores of a single taxon. First, for $Y\subseteq X$, $\hat{x}\in Y$ and a pendant subtree $S$ of $T(Y)$, we call
\begin{align*}
\text{LB}(\hat{x},T)&=\sum_{\substack{e=(u,v)\in P(\hat{x})\cap U_c(T)\\T[v]\text{ balanced}}}\frac{\ell(e)}{\left|\hat{X}_{\hat{x},e}\right|}
\end{align*}
and
\begin{align*}
\text{UB}\left(\hat{x},S\right)&=\,\text{LB}(\hat{x},T)+\sum_{\substack{e=(u,v)\in P(\hat{x})\cap U_c(S)\\T[v]\text{ not balanced}}}\frac{\ell(e)}{\left|\hat{X}_{\hat{x},e}\right|}+\sum_{e\in P(\hat{x})\cap U_f(S)}\frac{\ell(e)}{\left|\hat{X}_{\hat{x},e}\right|}
\end{align*}
the \emph{minimum allocation from $T$} and \emph{maximum allocation from $S$ to $\hat{x}$}, respectively. Notice that the quantity UB$(\hat{x},S)$ contains the term LB$(\hat{x},T)$ which one might expect \CorrB{to depend} on $S$. Here our definition of UB$(\hat{x},S)$ simplifies our notation throughout the article by overlooking evaluation differences between LB$(\hat{x},T)$ and LB$(\hat{x},S)$ which will not affect our structural results.

Next, we parameterize the allocation of lengths for edges $e\in E(S)$ to $\hat{x}$ to move beyond allocations LB$(\hat{x},T)$ and UB$(\hat{x},S)$. At this stage the following definitions are very technical. However, later their high level of detail will simplify our proof of the main result in this article. To this end, we consider the two parameters $E_f\in\mathcal{F}(T(Y))$ and $Z\subseteq Y$ to define $E_f(Z)=\bigcup_{z\in Z}P(z)\cap E_f$ as the set of edges in $E_f$ affecting the allocation of edge lengths to taxa in $Z$ without taking neutrality conditions into account.  Hence, for $e\in E(T)$, the set $E_f\left(\hat{X}_{\hat{x},e}\cap Y\right)$ encodes the edges in $E_f$ affecting the allocation of $\ell(e)$ to $\hat{x}$ in $T(Y)$. Then, for $E^-\subseteq E_f\cap E(S)$, define set
\begin{align*}
\mathcal{E}\left(\hat{x},Y,E_f,E^-,S\right)=&\left(P(\hat{x})\cap E^-\right)\cup\left\{e\in P(\hat{x})\cap E(S)\,:\,\exists\,i\in\{1,\dots,p\},\,e\in E_i(T),\right.\\
&~~~~~~~~~~~~~~~~~~~~~~~~~~~~~~~~~~~~~~~~~~~~\left.\,E_f\left(\hat{X}_{\hat{x},e}\cap Y\right)\cap E_i(T)\neq\emptyset\right\}
\end{align*}
to encode the edges in $P(\hat{x})\cap E(S)$ which either appear in $E^-$ or in the same equivalence class $E_i(T)$, $i\in\{1,\dots,p\}$, as an edge in $E_f\left(\hat{X}_{\hat{x},e}\cap Y\right)$. The purpose of this highly parameterized set is to quantify the dependence of the allocation of a select subset of edge lengths $\{\ell(e)\,:\,e\in E^-\}$ in a pendant subtree of $T(Y)$ to the allocation of edge lengths $\{\ell(e)\,:\,e\in E_f\}$ in $T(Y)$, localized to the set of taxa~$\hat{X}_{\hat{x},e}$. For example, for $E_f'\in\mathcal{F}(S)$ with $E_f'\subset E_f$,
\begin{align*}
\mathcal{E}\left(\hat{x},X,E_f,E_f',S\right)=E_f''\in\mathcal{F}(S)~~~\text{such that}~~~\left|P(\hat{x})\cap E(S)\cap E_f''\right|~~~\text{is maximum.}
\end{align*}
Analogously, we can employ set $\mathcal{E}\left(\hat{x},Y,E_f,E_f',S\right)$ to maximize the allocation of edge lengths $\{\ell(e)\,:\,e\in E_f\cap E(S)\}$ to $\hat{x}$ in $T(Y)$ for any choice of $E_f\in\mathcal{F}(T(Y))$. This formalization will be useful when arguing about maximal allocations of edge lengths to taxa in a recursive setting in the next section. To make such allocations comparable to the quantities LB$(\hat{x},T)$ and UB$(\hat{x},S)$ we extend our definitions a bit further to include sets of dependent edges $E_c\in\mathcal{C}(T(Y))$ that dependent on $\mathcal{E}\left(\hat{x},Y,E_f,E_f',S\right)$: let $\mathcal{E}^*\left(\hat{x},Y,E_f,E^-,S\right)$ denote the union of $\mathcal{E}\left(\hat{x},Y,E_f,E^-,S\right)$ with all sets $E_c'$ such that
\begin{enumerate}
\item $E_c'$ is dependent on $\mathcal{E}\left(\hat{x},Y,E_f,E^-,S\right)$,
\item $\left|P(\hat{x})\cap E_c'\right|$ is maximal in $\mathcal{C}(S)$,
\item $E_c'\subseteq E_c\in\mathcal{C}(S)$ is the smallest subset satisfying (1) and (2).
\end{enumerate}
For example,
\begin{align*}
\mathcal{E}^*\left(\hat{x},X,E_f,E_f',S\right)=P(\hat{x})\cap E(S).
\end{align*}
While the definition of $\mathcal{E}^*\left(\hat{x},Y,E_f,E^-,S\right)$ appears to be overparameterized to describe the allocation of some edge lengths to taxon $\hat{x}$, the free choice of the set of independent edges $E_f$, the set of taxa $Y$ and the pendant subtree $S$ makes our final definition in this section of a maximum allocation from $T$ to a set of taxa $Y$ more concise. We illustrate the definitions introduced after Example~\ref{ex::consistent}:

\begin{figure}[t]
\centering
\includegraphics[scale=0.42]{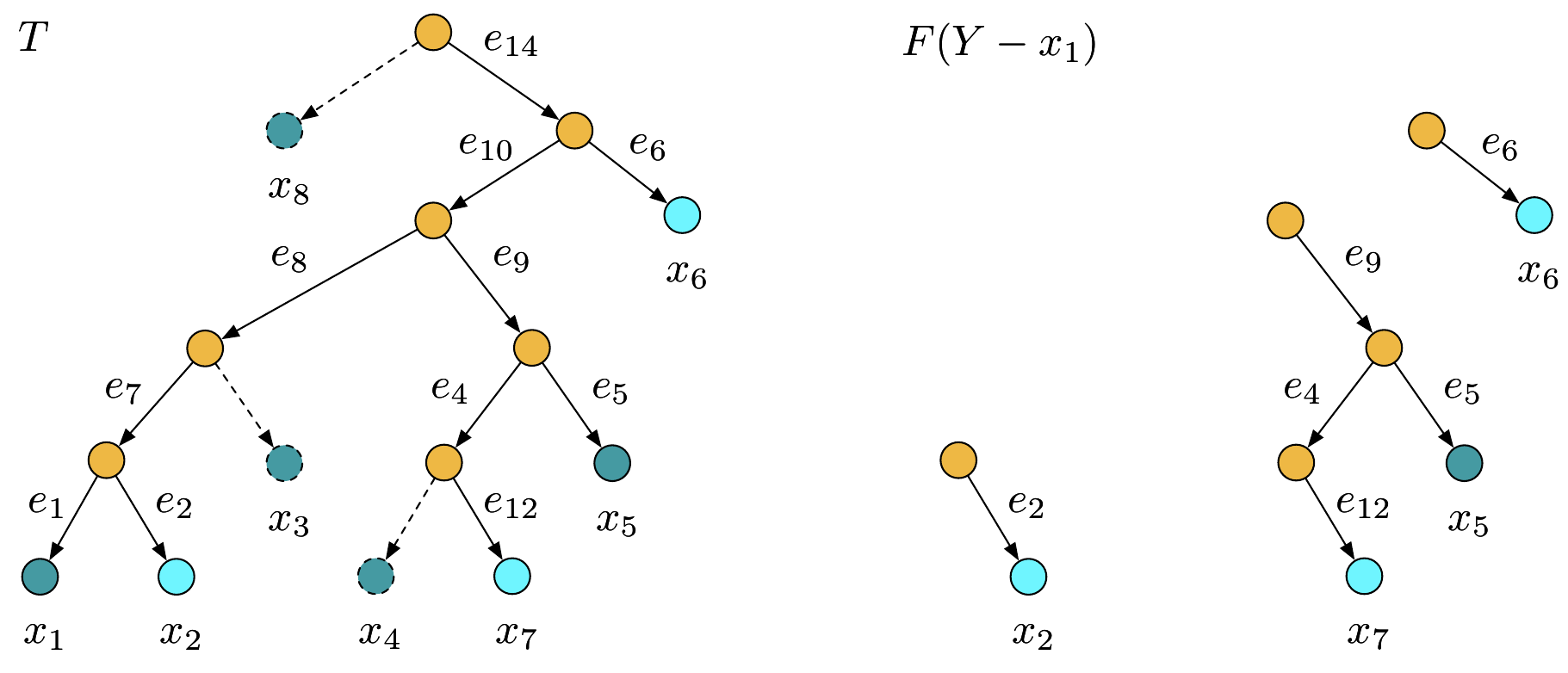}
\caption{On the left a rooted $X$-tree $T$ for $X=\{x_1,x_2,\dots,x_8\}$. For $Y=\{x_1,x_2,x_5,x_6,x_7\}$, the induced subtree $T(Y)$ of $T$ is shown in non-dashed lines. The set $\mathcal{R}(x_1,Y)=\{x_2,x_6,x_7\}$ is shown in light blue. On the right we have the resulting forest $F(Y-x_1)$.}\label{fig::df1.2}
\end{figure}
\begin{exmp}\label{ex::mathcal{E}}
Consider the rooted $X$-tree $T$ on the left in Figure~\ref{fig::df1.2} and the corresponding sets $Y=\{x_1,x_2,x_5,x_6,x_7\}$ and $\mathcal{R}(x_1,Y)=\{x_2,x_6,x_7\}$. Then, for $E_f=\{e_8,e_{14}\}\in\mathcal{F}(T)$ and $E^-=E_f\cap E(T(Y))=E_f$,
\begin{align*}
\mathcal{E}\left(x_1,Y,E_f,E^-,T(Y)\right)=E_f
\end{align*}
because $$\left|P(x_1)\cap E(T(Y))\cap E_f\right|=\left|\{e_1,e_7,e_8,e_{10},e_{14}\}\cap E_f\right|=E_f$$
is maximum among all sets of independent edges in $\mathcal{F}(T(Y))$. This means, since $E_c=\{e_1,e_7,e_{10}\}$ is dependent on $E_f$ with $|P(x_1)\cap E_c|=3$ maximum in $\mathcal{C}(T(Y))$, we have
\begin{align*}
\mathcal{E}^*\left(x_1,Y,E_f,E^-,T(Y)\right)=E_f\cup E_c=\{e_1,e_7,e_8,e_{10},e_{14}\}.
\end{align*}
\end{exmp}
Finally, we are able to describe our recursive decomposition approach to the allocation of edge lengths from $T$. As shorthand notation, let $\hat{X}_{\hat{x}}$ denote the set defining the largest denominator in the definition of LB$(\hat{x},T)$ and let $N(\hat{x})=\left|\hat{X}_{\hat{x}}\right|$. In addition, for $x_i\in Y$, $Z\subseteq Y$, let
\begin{align*}
N(x_i,e,Y)&=\left|\hat{X}_{x_i,e}\cap\bigcup_{y\in Y}\hat{X}_y\right|/\left|\hat{X}_{x_i,e}\right|\in[0,1]~&~&\forall\,e\in E(T).
\end{align*}
Then, we call
\begin{align*}
\text{LB}\left(x_i,Y,T\right)=\sum_{\substack{e=(u,v)\in P(x_i)\cap U_c(T)\\T[v]\text{ balanced}}}N(x_i,e,Y)\cdot \ell(e)=\sum_{\substack{e=(u,v)\in P(x_i)\cap U_c(T)\\T[v]\text{ balanced}}}\ell(e)
\end{align*}
the \emph{minimum allocation from $T(Y)$ to $x_i$}. Here, we have $N(x_i,e,Y)=1$ because $T[v]$ is balanced, i.e., $\hat{X}_{x_i,(u,v)}\subseteq\hat{X}_{x_i}$. Furthermore, for $E^-=E_f\cap E(T(Z))$, we call
\begin{align*}
r_i(T(Z),E_f)=&\sum_{e\in \mathcal{E}^*(x_i,Y,E_f,E^-,T(Z))}N(x_i,e,Y)\cdot\ell(e)\\
&~~~+\sum_{x_j\in \mathcal{R}\left(x_i,Z\right)}r_j\left(T_j(Z-x_i),E_{f|i}\right)
\end{align*}
the \emph{maximum allocation from $T$ to $Y$ with respect to $x_i$, $Z$ and $E_f$}. Intuitively $r_i(T(Z),E_f)$ provides a rough upper bound for the greatest allocation of edge lengths to $x_j$ given that as much as possible is being allocated to $Z$ by leveraging the topological properties of $T(Z)$. The following example illustrates $r_i(T(Z),E_f)$:
\begin{exmp}\label{ex::ri}
Continuing Example~\ref{ex::mathcal{E}}, we consider the rooted $X$-tree $T$ on the left in Figure~\ref{fig::df1.2}, i.e., $Y=\{x_1,x_2,x_5,x_6,x_7\}$, $E^-=E_f=\{e_8,e_{14}\}$, $$\mathcal{R}(x_1,Y)=\{x_2,x_6,x_7\}~~\text{and}~~\mathcal{E}^*\left(x_1,Y,E_f,E^-,T(Y)\right)=\{e_1,e_7,e_8,e_{10},e_{14}\}.$$ 
Since $\bigcup_{y\in Y}\hat{X}_y=Y\cup\{x_4\}$, we get
\begin{align*}
N(x_1,e_1,Y)&=\left|\{x_1\}\cap\left(Y\cup\{x_4\}\right)\right|/\left|\{x_1\}\right|=1\\
\text{and}~~N(x_1,e,Y)&=\left|\{x_1,x_2\}\cap\left(Y\cup\{x_4\}\right)\right|/\left|\{x_1,x_2\}\right|=1~&~&\forall\,e\in\{e_7,e_8,e_{10},e_{14}\}.
\end{align*}
Therefore, $r_1\left(T(Y),E_f\right)$ can be written as
\begin{align*}
&\sum_{e\in\mathcal{E}^*(x_1,Y,E_f,E^-,T(Y))}N(x_1,e,Y)\cdot\ell(e)+\sum_{x_j\in \mathcal{R}\left(x_1,Y\right)}r_j\left(T_j(Y-x_1),E_{f|1}\right)\\
=&\,\text{LB}(x_1,Y,T)+\sum_{e\in\{e_8,e_{10},e_{14}\}}\ell(e)+\sum_{x_j\in\{x_2,x_6,x_7\}}r_j\left(T_j(Y-x_1),E_{f|1}\right).
\end{align*}
Since the constriction $E_{f|1}$ of $E_f$ to edges in $F(Y-x_1)$ is the empty set (see Figure~\ref{fig::df1.2}), for $S=T_2(Y-x_1)$, 
\begin{align*}
r_2\left(S,E_{f|1}\right)=\sum_{e\in\mathcal{E}^*(x_2,Y\setminus\{x_1\},E_f,\emptyset,S)}N(x_i,e,Y)\cdot\ell(e)=\,\text{LB}(x_2,Y,T).
\end{align*}
Analogously, $r_6\left(T_6(Y-x_1),E_{f|1}\right)=\,\text{LB}(x_6,Y,T)$. Similarly, for $S=T_7(Y-x_1)$ and $E^-=E_f\cap E(T(\{x_5,x_7\}))=\emptyset$,
\begin{align*}
\mathcal{E}(x_7,Y\setminus\{x_1\},E_f,E^-,S)=\{e_9\}
\end{align*}
and therefore $\mathcal{E}^*(x_7,Y\setminus\{x_1\},E_f,E^-,S)=\{e_4,e_9,e_{12}\}$. Hence,
\begin{align*}
r_7\left(S,E_{f|1}\right)&=\,\text{LB}(x_7,Y,T)+\ell(e_9)+r_5\left(T'_5(\{x_7\}-x_7),\emptyset\right)\\
&=\,\text{LB}(x_7,Y,T)+\ell(e_9)+\,\text{LB}(x_5,Y,T).
\end{align*}
Thus, in total
\begin{align*}
r_1\left(T(Y),E_f\right)-\sum_{y\in Y}\text{LB}\left(y,Y,T\right)=\ell(e_8)+\ell(e_9)+\ell(e_{10})+\ell(e_{14}).
\end{align*}
Notice that, like with our definition of the quantity UB$(x_i,S)$, the terms LB$(x_i,Y,T)$ appearing in $r_1(T(Z),E_f)$ do not reflect an intuitive understanding of a maximum allocation because for our choice of $Y$ excluding taxon $x_4$ from all calculations yields a tighter upper bound on edge lengths allocated to taxa in $Y$. Again, overlooking this discrepancy will simplify the notation of our further analysis in this article.

Similarly, if we keep $Y=\{x_1,x_2,x_5,x_6,x_7\}$ but consider taxon $x_5$ instead of $x_1$, we arrive at
\begin{align*}
r_5\left(T(Y),E_f\right)-\sum_{x_i\in Y}\text{LB}(x_i,Y,T)=\ell(e_9)+\frac{1}{2}\ell(e_{10})+\frac{1}{2}\ell(e_{14}).
\end{align*}
\end{exmp} 

To conclude this section, we summarize some additional properties of diversity indices investigated by Manson and Steel~\cite{DIopt1} as follows:
\begin{proposition}\label{prop::MS}
Let $\hat{T}=(T,\ell)$ be a rooted phylogenetic $X$-tree.
\begin{enumerate}
\item For every diversity index $\varphi_{\hat{T}}$ which is not consistent there exists a consistent diversity index with the same diversity index scores as $\varphi_{\hat{T}}$.
\item For any diversity index $\varphi_{\hat{T}}$ and $x\in X$, we have $\varphi_{\hat{T}}(x)\in[\text{LB}(x,T),\text{UB}(x,T)]$.
\item $\mathcal{P}(T,\ell)$ is a finite-dimensional convex set.
\item If all diversity indices are consistent, then the dimension of $\mathcal{P}(T,\ell)$ is the maximum number of free choices which specify $\Gamma_T$ for any $\Gamma_Tl\in \mathcal{P}(T,\ell)$.
\end{enumerate}
\end{proposition}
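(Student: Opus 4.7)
The proposition bundles four distinct statements, so the plan is to treat them separately, leaning on the consistency and equivalence-class machinery set up above and, where convenient, citing \cite{DIopt1}. Claim~1 is the deepest of the four and I expect it to be the main obstacle. My plan is an explicit top-down redistribution: sweep the internal vertices $v$ of $T$ from the root, and at each edge $e \in P(v)$ rewrite the subcolumn of $\Gamma_T(e)$ supported on $X[v]$ so that the partial sums $\sum_{x\in X[w]}\gamma(x,e)$, over $(v,w)\in E(T)$, become proportional across $e$ as $w$ varies; those proportionality constants are the desired consistency constants $k$ of \eqref{def::consistency}. The per-taxon scores are to be preserved by redistributing within each $X[w]$-block in a way that leaves $\sum_{e\in P(x)}\gamma(x,e)\ell(e)$ invariant for every $x$. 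The delicate point will be simultaneously maintaining non-negativity and the neutrality conditions \eqref{div::3} throughout the sweep, so the cleanest route may be to invoke the corresponding result of Manson and Steel~\cite{DIopt1}, to which this proposition is explicitly attributed.

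For claim~2, I would inspect each edge $e=(u,v)\in P(x)$ separately. Whenever $T[v]$ is balanced, conditions \eqref{div::1} and \eqref{div::3} together force $\gamma(x,e)=1/|X[v]|=1/|\hat{X}_{x,e}|$, so the contributions indexed by $\text{LB}(x,T)$ are pinned from below, giving $\varphi_{\hat{T}}(x)\geq \text{LB}(x,T)$. For the upper bound, $\hat{X}_{x,e}$ by definition collects every taxon forced by neutrality and consistency to share $x$'s allocation on $e$, so \eqref{div::1} caps their common value at $1/|\hat{X}_{x,e}|$; summing over $e\in P(x)$ yields $\varphi_{\hat{T}}(x)\leq \text{UB}(x,T)$. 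Claim~3 follows by observing that \eqref{div::1}--\eqref{div::3} together with $\gamma\geq 0$ cut out a bounded convex polytope $\mathcal{Q}\subseteq\mathbb{R}^{n\times|E(T)|}$ --- bounded because \eqref{div::1} and non-negativity confine every $\gamma(x,e)$ to $[0,1]$ --- and $\mathcal{P}(T,\ell)$ is its image under the linear map $\Gamma_T\mapsto\Gamma_T l$, hence a bounded convex polytope itself.

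For claim~4, by claim~1 it suffices to work with consistent diversity indices. Proposition~\ref{prop::degfree} already shows that the consistent matrices form an affine slice of $\mathcal{Q}$ of dimension $|E_f|$ for every $E_f\in\mathcal{F}(T)$, so the linear image has dimension at most $|E_f|$. For the reverse inequality, the plan is to exhibit $|E_f|$ linearly independent tangent directions in the image: for each $e\in E_f$ perturb the single free entry of $\Gamma_T(e)$, propagate the perturbation through the $E_c$-edges dependent on $e$, and record the resulting change in the score vector. The step I expect to require care is verifying linear independence across the choices of $e\in E_f$; this should follow because the dominant contribution of an $e$-perturbation lives on the coordinates indexed by $X[v]$ with $e=(u,v)$, and these supports are either disjoint or nest in a root-to-leaves triangular pattern that can be resolved by ordering the edges of $E_f$ by depth.
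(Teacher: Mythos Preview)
The paper does not prove this proposition at all: it is introduced as a summary of results ``investigated by Manson and Steel~\cite{DIopt1}'' and stated without argument. Your fallback option of simply invoking \cite{DIopt1} is therefore exactly what the paper does, and your sketches for claims~2--4 already go beyond the paper's own treatment.

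Two remarks on those sketches. For the upper bound in claim~2, your edge-by-edge inequality $\gamma(x,e)\leq 1/|\hat{X}_{x,e}|$ tacitly assumes the index is consistent: the set $\hat{X}_{x,e}$ is defined via neutrality \emph{and} consistency conditions, and a non-consistent index need not satisfy the latter, so an individual coefficient $\gamma(x,e)$ can exceed $1/|\hat{X}_{x,e}|$. The fix is to first apply claim~1 to pass to a consistent index with the same score vector, and then run your per-edge argument there. For claim~4, the ``at least $|E_f|$'' direction requires that the linear map $\Gamma_T\mapsto\Gamma_T l$ does not annihilate any of your $|E_f|$ perturbation directions; this is where positivity of the edge lengths on $U_f(T)$ (and the dependent edges in $U_c(T)$) enters, and you should either state that hypothesis explicitly or defer to \cite{DIopt1}. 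The paper itself later re-derives claim~4 independently in Proposition~\ref{prop::basis}.2 by exhibiting an explicit basis, which is a more constructive version of the perturbation idea you outline.
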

Propositions~\ref{prop::MS}.1 to~\ref{prop::MS}.4 refer to Propositions 11, 5, 7 and Theorem 12 in~\cite{DIopt1}, respectively.

\section{On the combinatorial properties of diversity indices}\label{sec4}
In this section we give a compact description of the diversity index polytope $\mathcal{P}(T,\ell)$ by providing all facet-defining inequalities of an embedding in an appropriate vector space. To this end, we assume that all diversity indices are consistent. \CorrB{This assumption alters the definition of $\mathcal{P}(T,\ell)$ to only contain vectors of diversity index scores which are the image of consistent diversity indices.} In general this assumption does not hold. However, as we will see, consistent diversity indices are sufficient to describe all facets of $\mathcal{P}(T,\ell)$. First, we use Proposition~\ref{prop::MS}.2 to characterize the set of extreme points of $\mathcal{P}(T,\ell)$. The proof is straightforward using the properties of $\mathcal{P}(T,\ell)$ (see Appendix~\ref{proof::STLc1}).
\begin{proposition}\label{prop::STlc1}
Let $\hat{T}=(T,\ell)$ be a rooted phylogenetic $X$-tree and let $E_f\in\mathcal{F}(T)$. Then, the set of extreme points of $\mathcal{P}(T,\ell)$ is given by
\begin{align*}
\mathrm{ext}(T,\ell)=&\left\{\Gamma_Tl\,:\,\text{choose a maximum or minimum value}\right.\\
&~~~~~~~~~\left.\text{for every free choice in }\Gamma_T(e),\,e\in E_f\right\}.
\end{align*}
\end{proposition}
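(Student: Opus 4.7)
The plan is to view $\mathcal{P}(T,\ell)$ as the image of a box under an affine map and appeal to the standard extreme-point transport principle. Fix $E_f \in \mathcal{F}(T)$. By Proposition~\ref{prop::degfree} (and the surrounding discussion culminating in $d_{\hat{T}}=|E_f|$), any consistent matrix $\Gamma_T$ is uniquely determined by one scalar free parameter $f_e$ per edge $e \in E_f$. Nonnegativity of the entries of $\Gamma_T(e)$ together with the convexity condition~\eqref{div::1} confines each $f_e$ to a closed bounded interval $I_e = [m_e,M_e]$. Hence the consistent matrices form the image of the box $Q := \prod_{e \in E_f} I_e$ under an affine parametrization $f \mapsto \Gamma_T(f)$, and composing with the linear edge-length aggregation $\Gamma_T \mapsto \Gamma_T\, l$ gives an affine surjection $\Psi : Q \to \mathcal{P}(T,\ell)$. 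By Proposition~\ref{prop::MS}.1, restricting to consistent diversity indices costs no extreme points, and by Proposition~\ref{prop::MS}.3 the target is indeed a polytope.

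For the inclusion $\mathrm{ext}(\mathcal{P}(T,\ell)) \subseteq \mathrm{ext}(T,\ell)$, I would invoke the standard fact that every extreme point of $\Psi(Q)$ lies in $\Psi(\mathrm{ext}(Q))$. The box $Q$ has as its extreme points precisely the corners $f^*$ with $f^*_e \in \{m_e, M_e\}$ for every $e \in E_f$, which are exactly the maximum/minimum choices described in the statement.

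For the reverse inclusion, take $f^* \in \mathrm{ext}(Q)$ and $s^* := \Psi(f^*)$, and suppose $s^* = \tfrac{1}{2}(s^1 + s^2)$ with $s^1, s^2 \in \mathcal{P}(T,\ell)$. Writing $s^i = \Psi(f^i)$ and using the affinity of $\Psi$, one obtains $\Psi(\bar f) = s^*$ for $\bar f = \tfrac{1}{2}(f^1 + f^2) \in Q$. The task is then to deduce $f^1 = f^2 = f^*$, whence $s^1 = s^2 = s^*$. I expect this to be the main obstacle, because $\Psi$ need not be injective when some edge lengths vanish. I would handle it by observing that each extreme choice $f^*_e \in \{m_e,M_e\}$ saturates a nonnegativity inequality on $\Gamma_T(e)$: concretely, it forces $\gamma(x,e) = 0$ for every $x$ in one of the two pendant subtrees rooted at the children of the head of $e$. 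Propagating these zero equalities through the descent condition~\eqref{div::2} and the consistency condition~\eqref{def::consistency} forces the corresponding entries of $\Gamma^1$ and $\Gamma^2$ to vanish as well, which together with convexity pins down $f^1_e = f^2_e = f^*_e$ for every $e \in E_f$. Everything else is a routine application of Proposition~\ref{prop::MS} and the parametrization developed in Section~\ref{sec2}.
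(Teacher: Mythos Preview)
Your overall architecture---parametrising consistent $\Gamma_T$ by a box $Q$ and viewing $\mathcal{P}(T,\ell)$ as the affine image $\Psi(Q)$---is precisely the paper's setup, and your forward inclusion $\mathrm{ext}(\mathcal{P})\subseteq\Psi(\mathrm{ext}(Q))$ via the extreme-point transport principle matches the paper's second paragraph. The key observation you invoke for the reverse inclusion, that an extreme free choice forces $\gamma(x,e)=0$ on one side of the split below $e$, is also the paper's.

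The gap is in how you transfer that zero from $\Gamma(f^*)$ to $\Gamma^1$ and $\Gamma^2$. The descent condition~\eqref{div::2} and the consistency condition~\eqref{def::consistency} are constraints \emph{within} a single coefficient matrix; they say nothing across distinct matrices, so ``propagating these zero equalities'' through them cannot by itself force any entry of $\Gamma^1$ or $\Gamma^2$ to vanish. The only bridge you possess is the score-level equation $s^*=\tfrac12(s^1+s^2)$, and your stated target $f^1=f^2=f^*$ is in general false when $\Psi$ is not injective (distinct parameters with the same score). The paper sidesteps this by working one level up: it writes the entrywise identity $\gamma(x,e)=\sum_j\lambda_j\gamma^j(x,e)$, and then $0=\gamma(x,e)$ together with $\gamma^j(x,e)\ge 0$ and $\lambda_j>0$ forces every $\gamma^j(x,e)=0$, whence $\Gamma_T^j(e)=\Gamma_T(e)$ for all $e\in E_f$ and so $\Gamma_T^j=\Gamma_T$. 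Identifying the averaged matrix with the extreme $\Gamma_T$ is legitimate precisely when $\Psi$ is injective, i.e.\ when $\dim\mathcal{P}(T,\ell)=d_{\hat T}=|E_f|$ as in Proposition~\ref{prop::MS}.4; invoke that (or quotient out the coordinates $e\in E_f$ with $\ell(e)=0$, on which $\Psi$ is constant) in place of your propagation step, and the argument goes through.
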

Next, consider the dimension of $\mathcal{P}(T,\ell)$ which we denote by \CorrB{$\dim\mathcal{P}(T,\ell)$}. From Proposition~\ref{prop::MS}.4 we know that \CorrB{$\dim\mathcal{P}(T,\ell)=d_{\hat{T}}$}. This insight enables us to give another characterization of $\mathcal{P}(T,\ell)$ different from Proposition~\ref{prop::STlc1}. Here, we make use of the recursive formula $r_i(T(Z),E_f)$ we derived in the last section to calculate the allocation of edge lengths to different pendant subtrees $T(Z)$ to characterize $\mathcal{P}(T,\ell)$ as an intersection of a finite number of halfspaces.

\subsection{A description of the diversity index polytope by linear inequalities}
First, we define a valid upper bound on the allocation of edge lengths to any subset of taxa: for $Z\subseteq Y\subseteq X$ and $E_f\in\mathcal{F}(T)$, we call
\begin{align}\label{riRecMax}
r\left(T(Z),E_f\right)=\max_{x_i\in Z}~r_i\left(T(Z),E_f\right)-\sum_{z\in Z}\text{LB}\left(z,Y,T\right),
\end{align}
the \emph{maximum allocation for $T(Z)$ with respect to $E_f$}. Example~\ref{ex::ri} has shown that its not clear a-priori which taxa maximize $r_i(T(Z),E_f)$ and subsequent recursion steps because of the dependency of numbers $N(x_i,e,Y)$ on $x_i$ and~$Y$. However, $x_i$ is a maximizer of~\eqref{riRecMax} for a specific choice of $Z$:
\begin{proposition}\label{prop::RecMaxI}
Let $\hat{T}=(T,\ell)$ be a rooted phylogenetic $X$-tree with $d_{\hat{T}}\geq 1$, let $x_i\in X$. Then, there exist $E_f\in\mathcal{F}(T)$ and $Y\subseteq X$ with $|Y|=d_{\hat{T}}$ such that
\begin{align}\label{max::ri}
r\left(T(Y\cup\{x_i\}),E_f\right)=r_i\left(T(Y\cup\{x_i\}),E_f\right)-\sum_{x_j\in Y\cup\{x_i\}}\text{LB}(x_j,Y\cup\{x_i\},T).
\end{align}
Furthermore, there exists a bijective map $\phi :E_f\to Y$ with $e\in P(\phi(e))$ such that 
\begin{align*}
\gamma(\phi(e),e)&=0~&~&\forall\,e\in E_f
\end{align*}
for a diversity index with scores associated to the recursive calculation in~\eqref{max::ri}. Moreover, $z\notin\hat{X}_{y,e}$ for all $y,z\in Y$, $e\in E(T)$.
\end{proposition}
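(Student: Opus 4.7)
The plan is to construct $E_f$, $Y$, and the bijection $\phi$ together by a top-down traversal of $T$, and then verify that $x_i$ realises the maximum in~\eqref{riRecMax} with these choices. The identity $d_{\hat{T}}=|E_f|$ for any $E_f\in\mathcal{F}(T)$, noted just before the proposition, is what makes the cardinality target $|Y|=d_{\hat{T}}$ attainable by pairing one taxon to each free edge via $\phi$.

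First I would fix any $E_f\in\mathcal{F}(T)$ and exploit the following structural observation: for each $e=(u,v)\in E_f$, the subtree $T[v]$ is not $H$-balanced for any $H$, so its children $w_1,w_2$ satisfy $T[w_1]\not\cong T[w_2]$, and consequently every class $\hat{X}_{y,e}$ with $y\in X[v]$ sits entirely in $X[w_1]$ or entirely in $X[w_2]$. I would then process the edges of $E_f$ in an order refining the ancestor--descendant partial order (top-down). For each $e=(u,v)\in E_f$ with $x_i\in X[v]$, let $w$ be the child of $v$ with $x_i\notin X[w]$ and pick $\phi(e)\in X[w]$ from an $\hat{X}$-class not yet used by any $\phi(e')$ already defined; for each $e=(u,v)\in E_f$ with $x_i\notin X[v]$, choose $\phi(e)\in X[v]$ by a canonical descent (for instance, to the leftmost leaf below $v$), again insisting on a fresh $\hat{X}$-class. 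Set $Y:=\phi(E_f)$.

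Injectivity of $\phi$ and condition~(3) would then follow from the structural observation above: the $\hat{X}$-classes attached to distinct $\phi(e)$'s are either nested or disjoint according to the ancestor relation of their edges, and a fresh representative is always available in the designated sibling subtree---otherwise that subtree would be symmetric enough to force $T[v]$ to be $H$-balanced, contradicting $e\in E_f$. I would then define a consistent diversity index $\varphi_{\hat{T}}$ by, at each $e=(u,v)\in E_f$, placing the full one-parameter free mass on the side containing $x_i$ (or, when $x_i\notin X[v]$, on the side not containing $\phi(e)$); by construction $\gamma(\phi(e),e)=0$ for every $e\in E_f$. Expanding~\eqref{riRec} with $Z=Y\cup\{x_i\}$ and $E^-=E_f$ and applying Lemma~\ref{lem::MB} at each recursive step would identify $r_i(T(Y\cup\{x_i\}),E_f)-\sum_{x_j\in Y\cup\{x_i\}}\text{LB}(x_j,Y\cup\{x_i\},T)$ with the total free-edge and non-balanced contribution that $\varphi_{\hat{T}}$ assigns starting from $x_i$. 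The maximality of $x_i$ in~\eqref{riRecMax} then reduces to noting that, for any $x_j\in Y$, the sibling-side placement of $\phi^{-1}(x_j)$ relative to $x_i$ forces the weighting factor $N(x_j,e,Y\cup\{x_i\})$ to be strictly smaller along at least one edge on $P(x_j)\cap E_f$, so $r_j$ is dominated by $r_i$.

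The main obstacle is the bookkeeping behind injectivity and condition~(3): each time a free edge $e=(u,v)$ is processed, one must guarantee that the designated sibling subtree contains a taxon whose $\hat{X}$-class is disjoint from all classes already spent on previously processed edges. I expect this to be the technical heart of the argument, resolved by combining the non-$H$-balanced hypothesis (which forces enough asymmetry between $X[w_1]$ and $X[w_2]$ at each step) with an induction on the depth of $v$. A secondary care point is verifying the maximality comparison between $r_i$ and $r_j$ for $x_j\in Y$, which requires tracking how the weighting factors $N(\cdot,e,Y\cup\{x_i\})$ change as the starting taxon shifts from $x_i$ to a sacrificed taxon and showing that this shift can never introduce new favourable contributions.
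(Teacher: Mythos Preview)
Your construction of $Y$ by top-down assignment is a different route from the paper's, and the difference matters for exactly the step where your proposal is weakest. The paper does not build $Y$ directly. Instead it first fixes $E_f$ and runs the recursion~\eqref{riRec} on the \emph{full} tree $T$ starting from $x_i$, maximising every subsequent recursive call; this produces a diversity index $\varphi_{\hat{T}}^0$ together with the terminal connected components $T_1',\dots,T_d'$ of the recursion. It then partitions $E_f=E_0\cup E_+$ according to whether a child edge of $e$ survives into one of the $T_p'$, and defines $x^e$ for $e\in E_0$ as $\arg\min_{x_j\in X[w_1]}\{s_j^0-\text{LB}(x_j,T)\}$ and for $e\in E_+$ via a short contradiction argument using consistency. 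Because $Y=\{x^e:e\in E_f\}$ is extracted \emph{from} the already-maximised allocation, the maximality of $x_i$ in~\eqref{riRecMax} is close to automatic: the paper just observes that any reallocation favouring some $x_j\in Y$ can only lose mass through the neutrality links among elements of $Y$.

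The gap in your proposal is the maximality step. Your argument that ``$N(x_j,e,Y\cup\{x_i\})$ is strictly smaller along at least one edge on $P(x_j)\cap E_f$'' does not yield $r_j\le r_i$: the weighting factors $N(\cdot,e,Y\cup\{x_i\})$ depend only on the edge $e$ and the set $Y\cup\{x_i\}$, not on which taxon initiates the recursion, so there is no ``smaller'' to compare against. More fundamentally, $r_i$ and $r_j$ traverse different paths and spawn different recursive subproblems, so a pointwise comparison of $N$-factors along $P(x_j)$ says nothing about the full recursion value. To close this you would need to show that the total free-edge contribution reachable from $x_j$ is no larger than from $x_i$, which is precisely what the paper obtains for free by defining $Y$ \emph{after} maximising from $x_i$ on all of $T$. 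Your direct construction has no such built-in optimality, so you would have to supply a separate inductive comparison of $r_i$ versus $r_j$---and that argument is not sketched. The ``fresh $\hat{X}$-class'' bookkeeping you flag as the main obstacle is real but secondary; the paper's $\arg\min$ device and the $E_0/E_+$ split handle it cleanly, and you might consider borrowing that mechanism even within your top-down framework.
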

\begin{proof}
Let $E_f\in\mathcal{F}(T)$ and let $\varphi_{\hat{T}}^0$ be the diversity index defined by coefficients $N\left(x_i,(u,v),X\right)$, $T[v]$ not balanced, in the calculation of 
\begin{align}\label{varphi::max}
\max~r_i\left(T,E_f\right)-\sum_{x_k\in X}\text{LB}(x_k,X,T).
\end{align}
Here, $x_i\in X$ is fixed and we maximize all subsequent recursion steps of $r_i\left(T,E_f\right)$. Let $s^0\in \mathcal{P}(T,\ell)$ be the image of $\varphi_{\hat{T}}^0$ and let $T_1',\dots,T_d'$ be the non-singleton connected components of $T$ which we obtain from the recursion $r_i(T,E_f)$ when it terminates in the calculation of~\eqref{varphi::max}. Let $E_f=E_0\cup E_{+}$ be a bipartition of edges $e=(u,v)\in E_f$ with $e_1=(v,w_1),e_2=(v,w_2)\in E(T)$ into edges
\begin{enumerate}
\item $e\in E_0$ such that $e_1\in E\left(T_p'\right)$ for some $p\in\{1,\dots,d\}$,  
\item $e\in E_{+}$ such that $e_i\notin E\left(T_p'\right)$ for all $i\in\{1,2\}$, $p\in\{1,\dots,d\}$. 
\end{enumerate}
Then, define
\begin{align}\label{bij::1}
x^e&=\,\text{arg}\,\min\limits_{x_j\in X[w_1]}\left\{s_j^0-\,\text{LB}(x_j,T)\right\}~&~&\forall\,e=(u,v)\in E_0,(v,w_1)\in E(T).
\end{align}
By definition of $E_0$, taxa $x^e$ are pairwise distinct and $\gamma(x^e,e)=0$, $e\in E_0$, for diversity index $\varphi_{\hat{T}}^0$. Next, let $e=(u,v)\in E_+$. Recall that in the calculation of~\eqref{varphi::max} edges $e_1$ and $e_2$ are removed for different reasons. One edge, say $e_2$, is removed to be consistent with a maximum allocation of edge $e$ to some taxon $x_k$. The other edge $e_1$ is removed for a maximum allocation of $e_1$ to some taxon $x_j\neq x_k$. For the latter, let $P$ denote the directed path starting in $v$ and ending in $x_j$. Suppose by contradiction $\gamma(x_j,e)>0$. Since $\ell(e)$ is (partially) allocated to a taxon $x_k\in X[w_2]$ by $\varphi_{\hat{T}}^0$, we know that $x_j\in\hat{X}_{x_k,e}$. This means, there exists an edge $e'=(u',v')\in P(x_j)\cap P(x_k)$ such that $T[v']$ is $H$-balanced for some graph $H$ and $v',e',e$ satisfy a consistency condition. Here, $e=e'$ is not possible because $e\in E_f$. However, there exists no edge $e'\in P(x_j)\cap P(x_k)$ with $|P(e')|>|P(e)|$ by our choice of $x_j$ and $x_k$, leading to a contradiction. Thus, $\gamma(x_j,e)=0$. Let $x^e=x_j$. Observe that taxa $x^e$, $e\in E_+$, are pairwise distinct because $x_j$ is a singleton component after the removal of the edges in $P$. Hence, for $Y=\{x^e\,:\,e\in E_f\}$, we obtain a bijective map $\phi:E_f\to Y$ with $e\in P(\phi(e))$ and $\gamma(\phi(e),e)=0$ in $\varphi_{\hat{T}}^0$ for all $e\in E_f$. Since $z\notin\hat{X}_{y,e}$ for all $y,z\in Y$, $e\in E(T)$, by our construction of $Y$, diversity index scores of pairs of taxa $x_k,x_l\in Y$ can be dependent only if a neutrality condition $\gamma(x_k,e_k)=\gamma(x_l,e_l)$ holds with either $e_k\in E_f$ or $e_l\in E_f$. Recall that taxa in $Y$ have maximum diversity index scores in~$s^0$ within the connected components they are representatives of in the recursion~$r_i(T,E_f)$. Hence, changing the allocation of lengths of edges in $E_f$ from $\varphi_{\hat{T}}$ to achieve
\begin{align*}
\max~r_j\left(T(Y\cup\{x_i\}),E_f'\right)-\sum_{x_k\in Y\cup\{x_i\}}\text{LB}(x_k,Y\cup\{x_i\},T).
\end{align*}
for some $E_f'\in\mathcal{F}(T)$ and taxon $x_j\in Y$, $j\neq i$, cannot yield a larger value than~\eqref{varphi::max}. Indeed, the neutrality conditions present for taxa $x_k,x_l\in Y$ can only decrease the sum of allocated lengths of edges in $E_f'$ to taxa in $Y$. Here, $E_f'$ establishes the same edge length allocations to taxa in $Y$ as $E_f$ does to taxa in $X$.
\end{proof}

We will use the construction of $Y$ in Proposition~\ref{prop::RecMaxI} for the rest of this section to ensure that we can identify the maximizer of problem~\eqref{riRecMax} without further calculation. Moreover, the bijective map $\phi:E_f\to Y$ indicates that taxa~$Y$ might be sufficient to describe $\mathcal{P}(T,\ell)$ because there exists a diversity index such that the allocation of $\ell(e)$ for any $e\in E_f$ keeps taxon $\phi(e)$ unaffected. In other words, there appear to be $d_{\hat{T}}=|E_f|$ free choices in the calculation of diversity index scores for taxa $Y$. Indeed, we consider $Y\subseteq X$ with $|Y|=d_{\hat{T}}$ and define the \emph{projection of $\mathcal{P}(T,\ell)$ to $Y$} by
\begin{align*}
\text{proj}_{Y}(T,\ell)=\left\{a\in\mathbb{R}_{\geq 0}^{Y}\,:\,a_i=N(x_i)\cdot\left(s_i-\,\text{LB}(x_i,T)\right)~~\forall\,x_i\in Y,~s\in \mathcal{P}(T,\ell)\right\}.
\end{align*}
Then, for $Y$ such that $\text{proj}_{Y}(T,\ell)$ is a proper projection of $\mathcal{P}(T,\ell)$ to $\mathbb{R}^Y$, i.e., the projection preserves the dimension~$d_{\hat{T}}$, we obtain a set of linear constraints which is sufficient to describe $\mathcal{P}(T,\ell)$:
\begin{proposition}\label{prop::STlc2}
Let $\hat{T}=(T,\ell)$ be a rooted phylogenetic $X$-tree with $d_{\hat{T}}\geq 1$. Let $E_f\in\mathcal{F}(T)$, $Y\subseteq X$, $|Y|=d_{\hat{T}}$ and let $\phi:E_f\to Y$ be a bijective map such that the free choice of numbers $\gamma(\phi(e),e)$, $e\in P(\phi(e))$, does fully specify the corresponding matrix $\Gamma_T$. Then,
\begin{align*}
\text{proj}_{Y}(T,\ell)\subseteq\left\{a\in\mathbb{R}_{\geq 0}^{Y}\,:\,\sum_{x_j\in Z}a_j\leq r\left(T(Z),E_f\right)~~~\forall\,Z\subseteq Y\right\}.
\end{align*}
\end{proposition}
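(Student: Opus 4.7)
The plan is to prove the claim by induction on $|Z|$. Fix $a \in \text{proj}_Y(T, \ell)$ and a preimage $s \in \mathcal{P}(T, \ell)$; by Proposition~\ref{prop::MS}.1 we may assume the underlying diversity index $\varphi_{\hat{T}}$ is consistent. Unpacking the definition of $a_j = N(x_j)(s_j - \text{LB}(x_j, T))$, the goal becomes
$$\sum_{x_j \in Z} N(x_j)\bigl(s_j - \text{LB}(x_j, T)\bigr) \leq r\bigl(T(Z), E_f\bigr).$$
The base case $|Z| = 1$ reduces to the per-taxon bound $s_j \leq \text{UB}(x_j, T)$ of Proposition~\ref{prop::MS}.2, matched against the definitions of $\text{LB}$, $\text{UB}$, and $r_j$.

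For $|Z| > 1$, I would choose $x_i \in Z$ attaining the maximum in the definition of $r(T(Z), E_f)$, with the structural witness supplied by Proposition~\ref{prop::RecMaxI}. Split the left-hand side into the contribution $N(x_i)(s_i - \text{LB}(x_i, T))$ from $x_i$ plus the contribution from $Z \setminus \{x_i\}$, and bound each separately. For the $x_i$-piece, use the descent condition to restrict $s_i$ to a sum over $e \in P(x_i)$; Proposition~\ref{prop::degfree} identifies the balanced-subtree edge terms with $\text{LB}(x_i, T)$; and the convexity condition combined with the neutrality and consistency equivalences encoded by the sets $\hat{X}_{x_i, e}$ bounds the residual by $\sum_{e \in \mathcal{E}^*(x_i, E^-, E_f, Y \setminus Z)} N(x_i, e, Y)\ell(e)$. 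This matches exactly the non-recursive summand appearing in the definition~\eqref{riRec} of $r_i(T(Z), E_f)$.

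For the $Z \setminus \{x_i\}$-piece, partition these taxa along the connected components $T_j(Z - x_i)$ of $F(Z - x_i)$. The convexity condition applied to the edges on $P(x_i)$ limits the total edge length available to be distributed among the other components once $x_i$ is prioritized, and this limitation is precisely the content of the representative choice $\mathcal{I}(x_i, Z, E_f)$ together with the restricted independence set $E_{f|i}$. Applying the inductive hypothesis to each $T_j(Z - x_i)$ with $E_{f|i}$ then produces the recursive summand of~\eqref{riRec}. Adding the two pieces gives $r_i(T(Z), E_f) - \sum_{x_j \in Z}\text{LB}(x_j, Y, T)$, which is $\leq r(T(Z), E_f)$ by maximality.

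The main obstacle will be the bookkeeping in the bound for the $x_i$-contribution: one must verify that the normalization $N(x_i)$ appearing on the left lines up with the weights $N(x_i, e, Y)$ on the right, and that $\mathcal{E}^*(x_i, E^-, E_f, Y \setminus Z)$ captures exactly those edges of $P(x_i)$ whose allocation to $x_i$ is not already blocked by a neutrality equivalence with some taxon outside $Z$ that is already ``consuming'' edge length. Because $\mathcal{I}(x_i, Z, E_f)$ need not be unique, additional care is required to choose representatives whose subtree allocations remain compatible with the convexity bounds derived in the first step; the construction in Proposition~\ref{prop::RecMaxI} supplies a canonical choice with $\gamma(\phi(e), e) = 0$ that I expect to use to resolve this.
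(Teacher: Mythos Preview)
Your route is genuinely different from the paper's and, as written, has a structural gap. You propose to induct on $|Z|$ and, in the step, apply the hypothesis to each component $T_j(Z-x_i)$ with the restricted set $E_{f|i}$. But the hypothesis you are carrying is about $r(T(Z'),E_f)$ for subsets $Z'\subseteq Y$ of the \emph{original} tree with the \emph{full} independent-edge set $E_f$, whereas the recursive summand in~\eqref{riRec} is $r_j(T_j(Z-x_i),E_{f|i})$, computed on a component of the forest with the edges of $P(x_i)$ excised. These are not the same object: $T(Z_j)$ retains the path from $\rho$ down to the component while $T_j(Z-x_i)$ does not, and $E_f\supsetneq E_{f|i}$ in general. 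So the hypothesis does not deliver the bound you need on the $Z\setminus\{x_i\}$-piece, and the edges on $P(x_i)$ that you have already counted toward the $x_i$-piece are precisely the ones missing from the recursive side. The bookkeeping obstacle you flag for the $x_i$-piece (matching $N(x_i)$ against $N(x_i,e,Y)$) is a symptom of the same mismatch.

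The paper sidesteps both issues with one device. Rather than inducting on $|Z|$, it first treats $Z=Y$ by passing to $Y'=\bigcup_{y\in Y}\hat{X}_y$: the disjointness of the $\hat{X}_y$ guaranteed by Proposition~\ref{prop::RecMaxI} turns the weighted sum $\sum_{x_j\in Y}N(x_j)(s_j-\text{LB}(x_j,T))$ into the unweighted $\sum_{x_j\in Y'}(s_j-\text{LB}(x_j,T))$, and simultaneously $N(x_i,e,Y)=N(x_i,e,Y')$, so both sides of the inequality transform identically. For the unweighted version over $Y'$ the bound is then essentially definitional, since $r(T(Y'),E_f)$ was constructed to be the maximum total allocation of non-balanced edge length to the leaves of $Y'$. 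The extension to proper subsets $Z\subsetneq Y$ comes not from descending into~\eqref{riRec} but from the bijection $\phi$: deleting one taxon $\phi(e)$ from $Y$ drops one degree of freedom, producing a smaller instance to which the $Z=Y$ case applies verbatim.
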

\begin{proof}
Let $Y'=\bigcup_{y\in Y}\hat{X}_y$. Since $|Y'|=\sum_{y\in Y}N(y)$ by our choice of $Y$, we have
\begin{align*}
N\left(x_i,e,Y'\right)=\left|\hat{X}_{x_i,e}\cap\bigcup_{y\in Y'}\hat{X}_y\right|/\left|\hat{X}_{x_i,e}\right|=\left|\hat{X}_{x_i,e}\cap\bigcup_{y\in Y}\hat{X}_y\right|/\left|\hat{X}_{x_i,e}\right|=N\left(x_i,e,Y\right).
\end{align*}
Hence, we know that
\begin{align}\label{proj::Yhat}
\sum_{x_j\in Y}N(x_j)\cdot\left(s_j-\,\text{LB}(x_j,T)\right)\leq r\left(T(Y),E_f\right)
\end{align}
is equivalent to
\begin{align}\label{proj::Yprime}
\sum_{x_j\in Y'}s_j-\,\text{LB}(x_j,T)\leq r\left(T(Y'),E_f\right).
\end{align}
Inequality~\eqref{proj::Yprime} holds for all $s\in\mathcal{P}(T,\ell)$ because function $r\left(T(Y'),E_f\right)$ encodes the difference between the maximum and minimum allocation of edge lengths from $T(Y')$ to taxa in $Y'$. By definition of $\phi$, removing one taxon from $Y$ yields an induced subtree $T'$ of $T(Y)$ with $d_{(T',\ell)}=d_{\hat{T}}-1$. Hence, inequality~\eqref{proj::Yhat} holds for all $Z\subseteq Y$, $Z\neq\emptyset$, $s\in\mathcal{P}(T,\ell)$. By definition we can draw the same conclusion for $Z=\emptyset$.
\end{proof}

Proposition~\ref{prop::STlc2} is sufficient to describe $\mathcal{P}(T,\ell)$ as an intersection of halfspaces. However, to prove necessity we embed $\mathcal{P}(T,\ell)$ in a vector space and use a canonical basis therein. 

\subsection{A construction of a basis of diversity indices}
Before we provide a proof for a basis construction for a suitable embedding of $\mathcal{P}(T,\ell)$, we give two preliminary technical results to simplify further arguments in this article: for $x,y\in X$, $E_f\in\mathcal{F}(T)$, $E_c\in\mathcal{C}(T)$ dependent on $E_f$ and weights
\begin{align*}
w_y(x,e)&=\begin{cases}0 &\text{if }y\in\hat{X}_{x,e},\\
1/\left|\hat{X}_{x,e}\right| &\text{otherwise}
\end{cases}~&~&\forall\,e\in E(T),
\end{align*}
we call
\begin{align*}
R\left(x,y,E_f\right)=\sum\limits_{\substack{e=(u,v)\in P(x)\cap P(y)\cap(E_f\cup E_c)\\T[v]\text{ not balanced}}}w_y(x,e)\ell(e)
\end{align*}
the \emph{maximum reallocation from $y$ to $x$ with respect to $E_f$}. Indeed, the quantity $R\left(x,y,E_f\right)$ captures the sum of edge lengths for edges in $P(x)\cap P(y)\cap(E_f\cup E_c)$ which depend on the allocation of edge lengths to $y$ (these edges $e$ neither appear in LB$(x,T)$ nor satisfy $y\in\hat{X}_{x,e}$) normalized by the maximum allocation of these edge lengths to $x$. We are interested in this quantity because the following symmetry relation will be useful when employing parts of our canonical basis to prove which inequalities are facet-defining for $\mathcal{P}(T,\ell)$:
\begin{lemma}\label{lem::Rsym}
Let $T$ by a rooted $X$-tree, $x,y\in X$, $E_f\in\mathcal{F}(T)$ and $E_c\in\mathcal{C}(T)$ dependent on $E_f$. Let $\hat{e}=(u,v)\in P(x)\cap P(y)\cap(E_f\cup E_c)$ with $T[v]$ not balanced and $|P(\hat{e})|$ maximum. Then,
\begin{align*}
\left|\hat{X}_{x,\hat{e}}\right|\cdot R\left(x,y,E_f\right)=\left|\hat{X}_{y,\hat{e}}\right|\cdot R\left(y,x,E_f\right)
\end{align*}
\end{lemma}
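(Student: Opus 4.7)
My strategy is to reduce the claim to an edgewise ratio identity. Let $\mathcal{S}$ denote the set of edges $e=(u_e,v_e)$ appearing with nonzero weight in $R(x,y,E_f)$: namely $e\in P(x)\cap P(y)\cap (E_f\cup E_c)$, $T[v_e]$ not balanced, and $y\notin\hat{X}_{x,e}$. I first observe that $\mathcal{S}$ is symmetric in $x,y$, because the relation ``$\gamma(x,e)=\gamma(z,e)$ forced by neutrality and consistency'' partitions $X$ into equivalence classes, so $y\in\hat{X}_{x,e}$ iff $x\in\hat{X}_{y,e}$. Hence $R(x,y,E_f)$ and $R(y,x,E_f)$ are indexed by the same set $\mathcal{S}$, and it suffices to prove the edgewise identity
\begin{align*}
|\hat{X}_{x,\hat{e}}|\cdot|\hat{X}_{y,e}|=|\hat{X}_{y,\hat{e}}|\cdot|\hat{X}_{x,e}| \qquad \forall\,e\in\mathcal{S},
\end{align*}
since multiplying by $\ell(e)/(|\hat{X}_{x,e}|\cdot|\hat{X}_{y,e}|)$ and summing yields the lemma.

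To prove the edgewise identity, I use the maximality of $|P(\hat{e})|$ to conclude that $v$ (the head of $\hat{e}$) is a descendant of $v_e$ for every $e\in\mathcal{S}$, so $x,y\in X[v]\subseteq X[v_e]$. I would decompose $\hat{X}_{x,e}$ into its intersection with $X[v]$ and its complement (and similarly for $y$), then argue: \emph{(i)} the intersection $\hat{X}_{x,e}\cap X[v]$ equals $\hat{X}_{x,\hat{e}}$, because the neutrality and consistency conditions that force pointwise equalities among taxa of $X[v]$ are the same on $\hat{e}$ and on $e$ (consistency constants at vertices of $P(v)$ on the connecting path transport these equalities uniformly across $X[v]$), and the analogous statement holds for $y$; \emph{(ii)} the enlargement $\hat{X}_{x,e}\setminus X[v]$ consists of images of $\hat{X}_{x,\hat{e}}$ under isomorphisms coming from $H$-balanced pendant subtrees of $T[v_e]$ whose ``active'' copy contains $v$. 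Writing $m_e$ for the total number of such copies, this gives $|\hat{X}_{x,e}|=m_e\cdot|\hat{X}_{x,\hat{e}}|$. Because $y\in X[v]$, the very same isomorphisms act on $y$, producing $|\hat{X}_{y,e}|=m_e\cdot|\hat{X}_{y,\hat{e}}|$. Dividing the two expressions yields the edgewise identity.

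\textbf{Main obstacle.} The delicate part is (ii): verifying that $\hat{X}_{x,e}$ decomposes into exactly $m_e$ isomorphic copies of $\hat{X}_{x,\hat{e}}$ with the \emph{same} multiplicity $m_e$ across all taxa in $X[v]$. This calls for a careful induction on the tree structure between $v$ and $v_e$, invoking Proposition~\ref{prop::degfree} to describe how neutrality (a) combines with consistency at ancestor vertices to produce matched equalities across isomorphic siblings, while simultaneously ruling out new intra-copy equalities or cross-copy pairings that do not correspond to a fixed isomorphism. Once (i) and (ii) are established, the edgewise identity and the preliminary symmetry of $\mathcal{S}$ combine immediately into the claimed equality.
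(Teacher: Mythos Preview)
Your proposal is correct and follows essentially the same route as the paper: both arguments reduce the lemma to the edgewise ratio identity $|\hat{X}_{x,e}|/|\hat{X}_{x,\hat{e}}|=|\hat{X}_{y,e}|/|\hat{X}_{y,\hat{e}}|$ for each relevant edge $e$, after first noting the symmetry of the nonzero-weight condition (your $\mathcal{S}$). The only substantive difference is in how the common multiplicity is obtained: where you propose an induction on the tree structure between $v$ and $v_e$ to extract a generic factor $m_e$, the paper observes directly that the only way $|\hat{X}_{x,e}|$ can strictly exceed $|\hat{X}_{x,\hat{e}}|$ is through $H$-balanced vertices $b$ on the path from $\hat{e}$ up to $e$, and that each such binary branching contributes a factor of $2$ to both $|\hat{X}_{x,\cdot}|$ and $|\hat{X}_{y,\cdot}|$ simultaneously (since $x,y\in X[v]$ lie in the same copy), yielding $m_e=2^k$ explicitly; this shortcut replaces your induction and the appeal to Proposition~\ref{prop::degfree}.
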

\begin{proof}
Let $e=(u',v')\in P(x)\cap P(y)\cap(E_f\cup E_c)$ with $T[v']$ not balanced. Assume $w_y(x,e)=1/\left|\hat{X}_{x,e}\right|$. Then, $w_x(y,e)=1/\left|\hat{X}_{y,e}\right|$. By the consistency conditions, $\left|\hat{X}_{x,\hat{e}}\right|\leq\left|\hat{X}_{x,e}\right|$ and $\left|\hat{X}_{y,\hat{e}}\right|\leq\left|\hat{X}_{y,e}\right|$. Moreover, strict inequality holds only if there exists a vertex $b\in P(u)\setminus P(u')$ which is the root of a $H$-balanced pendant subtree of $T$ for some graph $H$. In this case, for $(a,b)\in E(T)$, $\left|\hat{X}_{x,(a,b)}\right|=2^k\cdot\left|\hat{X}_{x,\hat{e}}\right|$ and $\left|\hat{X}_{y,(a,b)}\right|=2^k\cdot\left|\hat{X}_{y,\hat{e}}\right|$ for some integer $k\geq 1$ because $(a,b)\in P(x)\cap P(y)$. Hence, $\left|\hat{X}_{x,e}\right|/\left|\hat{X}_{x,\hat{e}}\right|=\left|\hat{X}_{y,e}\right|/\left|\hat{X}_{y,\hat{e}}\right|$. Thus, our claim follows.
\end{proof}

Now, to construct a basis for our desired vector space we will use the following technical result (proof in Appendix~\ref{proof::rank}):

\begin{lemma}\label{lem::rank}
For an integer $m\geq 2$, 
\begin{itemize}
\item let $A\in\mathbb{R}^{m\times m}$ such that $a_{ij}=a_{ik}> 0$ for all $i,j,k\in\{1,\dots,m\}$,
\item let $B\in\mathbb{R}^{m\times m}$ such that $b_{11}\geq 0$, $b_{ii}>0$ for $i\in\{2,\dots,m\}$, and $B$ is zero in non-diagonal entries except for at most $m-1$ negative entries and at most one negative entry in both row $i$ and column $i$, $i\in\{1,\dots,m\}$.
\end{itemize}
Then, $A+B$ has full rank.
\end{lemma}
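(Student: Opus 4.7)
The plan is to show that every $y \in \mathbb{R}^m$ with $(A+B)y = 0$ must vanish. Since the hypothesis on $A$ makes each row of $A$ a positive scalar multiple of $\mathbf{1}^T$, I can write $A = u\,\mathbf{1}^T$ for some $u \in \mathbb{R}^m_{>0}$, so the kernel equation rewrites as
\[
By = -\sigma\, u, \qquad \sigma := \mathbf{1}^T y,
\]
and the rank question reduces to analysing a sparse linear system in $y$ parametrised by the scalar $\sigma$.

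Next I plan to exploit the combinatorial structure of the off-diagonal negatives of $B$. The hypothesis forces them to form a ``matching'': a disjoint collection of pairs $(i_r, j_r)$ together with a set of singleton indices, so that each index appears in at most one negative entry, with the unique negative of a pair sitting in row $i_r$ and column $j_r$. Reading off the corresponding row equations of $By = -\sigma u$: a singleton $v$ gives $b_{vv} y_v = -\sigma u_v$; for a pair $(i_r, j_r)$, row $j_r$ contains only its diagonal entry and yields $b_{j_r, j_r} y_{j_r} = -\sigma u_{j_r}$, after which row $i_r$ gives $b_{i_r, i_r} y_{i_r} + b_{i_r, j_r} y_{j_r} = -\sigma u_{i_r}$.

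Solving these in sequence and tracking signs via $u > 0$, $b_{ii} > 0$ for $i \geq 2$, and $b_{i_r, j_r} < 0$, every $y_i$ turns out to be of the form $-\sigma g_i$ with $g_i > 0$. Substituting back into $\sigma = \mathbf{1}^T y$ then produces $\sigma\bigl(1 + \sum_i g_i\bigr) = 0$; since $1 + \sum_i g_i > 0$, the only possibility is $\sigma = 0$, and hence $y = 0$.

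The step I expect to be trickiest is the boundary case $b_{11} = 0$, in which the row-$1$ equation loses its diagonal anchor. I plan a short case split on vertex $1$'s role in the matching structure (isolated singleton, ``row-end'' $i_r$ of a pair, or ``column-end'' $j_r$ of a pair). In each sub-case, the degenerate row-$1$ equation, combined with the equation at the adjacent matched index through the same positivity argument, still forces $\sigma = 0$, after which the consistency identity $\sigma = \mathbf{1}^T y$ pins down $y_1 = 0$ and closes the argument.
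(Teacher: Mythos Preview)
Your proof is correct and takes a genuinely different route from the paper's. The paper proceeds by induction on $m$: it isolates the first row and column (using that row~$1$ and column~$1$ together carry at most one off-diagonal negative, recorded there as the condition $b_2 b_3 = 0$), performs one step of Gaussian elimination, and verifies that the resulting $(m-1)\times(m-1)$ block again satisfies the hypotheses, splitting into cases according to which of $b_1, b_2, b_3$ vanish. Your approach instead exploits the global rank-one structure $A = u\mathbf{1}^T$ to reduce the kernel equation to $By = -\sigma u$ with $\sigma = \mathbf{1}^T y$, and then uses the matching structure of the off-diagonal negatives to solve this sparse system row by row, showing explicitly that each $y_i$ equals $-\sigma g_i$ for some $g_i>0$; the consistency identity $\sigma = \mathbf{1}^T y$ then forces $\sigma = 0$ and hence $y=0$. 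Your method is more direct and avoids the induction entirely; it also makes transparent \emph{why} the hypothesis ``at most one negative in row $i$ and column $i$ combined'' is exactly what is needed --- with longer chains or $2$-cycles among the negatives the sign-tracking breaks down and the statement is in fact false. The paper's inductive reduction is more mechanical and does not require recognising the matching structure up front. Both arguments handle the degenerate case $b_{11}=0$ by a short separate analysis, and your three-way split on the role of vertex~$1$ matches what is needed there.
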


In addition, to simplify our notation of our basis construction and throughout the rest of the article we assume whenever we select a taxon $x_i\in X$ for a rooted phylogenetic $X$-tree $\hat{T}$ that its label $i$ is at most $d_{\hat{T}}+1$.

\begin{proposition}\label{prop::basis}
Let $\hat{T}=(T,\ell)$ be a rooted phylogenetic $X$-tree with $d_{\hat{T}}\geq 1$.
\begin{enumerate}
\item For $x_i\in X$, there exist $E_f\in\mathcal{F}(T)$ and a subset $\left\{s^0,s^1,\dots,s^{d_{\hat{T}}}\right\}\subseteq \mathcal{P}(T,\ell)$ of $d_{\hat{T}}+1$ \CorrB{linearly independent} vectors satisfying
\begin{align*}
s_i^0&=\,\text{UB}\left(x_i,T\right),\\
s_i^j&=\begin{cases}\text{UB}\left(x_i,T\right)-R\left(x_i,x_j,E_f\right) &\text{if }j<i,\\
\text{UB}\left(x_i,T\right)-R\left(x_i,x_{j+1},E_f\right) &\text{otherwise}
\end{cases}~&~&\forall\,j\in\left\{1,\dots,d_{\hat{T}}\right\}.
\end{align*}
In addition, for $j\in\{1,\dots,d_{\hat{T}}+1\},i\neq j$, we have
\begin{align*}
&s_j^0\geq s_j^k,~~~s_k^k>s_j^k~&~&\forall\,k\in\{1,\dots,i-1\},j\neq k,\\
&s_j^0\geq s_j^k,~~~s_{k+1}^k>s_j^k~&~&\forall\,k\in\{i,\dots,d_{\hat{T}}\},j\neq k+1.
\end{align*}
\item \CorrB{$\dim\mathcal{P}(T,\ell)=d_{\hat{T}}$} and the vector space of minimal dimension that includes $\mathcal{P}(T,\ell)$ has dimension $d_{\hat{T}} +1$.
\end{enumerate}
\end{proposition}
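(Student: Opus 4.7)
The plan is to construct the $d_{\hat{T}}+1$ vectors explicitly as images of carefully chosen consistent diversity indices and then to deduce linear independence from Lemma~\ref{lem::rank}. First, I would apply Proposition~\ref{prop::RecMaxI} to $x_i$ to obtain $E_f\in\mathcal{F}(T)$, a subset $Y\subseteq X$ with $|Y|=d_{\hat{T}}$, and a bijection $\phi:E_f\to Y$ satisfying $\gamma(\phi(e),e)=0$ and $z\notin \hat{X}_{y,e}$ for all $y,z\in Y$ and $e\in E(T)$. After relabelling so that $\{x_1,\ldots,x_{d_{\hat{T}}+1}\}=\{x_i\}\cup Y$, I would set $s^0$ to be the image of the consistent diversity index $\varphi^0$ realising the maximum in the proof of Proposition~\ref{prop::RecMaxI}, so that $s_i^0=\text{UB}(x_i,T)$ by construction. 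For each $k\in\{1,\ldots,d_{\hat{T}}\}$, writing $k'=k$ for $k<i$ and $k'=k+1$ for $k\geq i$, and setting $e_k=\phi^{-1}(x_{k'})$, I would define $s^k$ as the image of the consistent diversity index $\varphi^k$ obtained from $\varphi^0$ by re-maximising the allocation to $x_{k'}$ on $e_k$ along with every edge in the equivalence class of $e_k$ and every edge of $E_c$ whose entries of $\Gamma_T$ depend on $\Gamma_T(e_k)$.

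The formula $s_i^k=\text{UB}(x_i,T)-R(x_i,x_{k'},E_f)$ then follows from the definition of $R$ combined with Lemma~\ref{lem::Rsym}, since the re-maximisation subtracts exactly $\sum \ell(e)/|\hat{X}_{x_i,e}|$ from $s_i^0$, summed over $e\in P(x_i)\cap P(x_{k'})\cap(E_f\cup E_c)$ with $T[v]$ not balanced and $x_{k'}\notin\hat{X}_{x_i,e}$; symmetrically, $s_{k'}^k-s_{k'}^0=R(x_{k'},x_i,E_f)>0$. The inequality $s_j^0\geq s_j^k$ for $j\notin\{i,k'\}$ relies on the Proposition~\ref{prop::RecMaxI} condition $z\notin\hat{X}_{y,e}$ for $y,z\in Y$: this excludes any other $Y$-taxon from sharing the equivalence class of $x_{k'}$ at an edge affected by the re-maximisation, so $s_j^k$ either remains $s_j^0$ or loses at edges where $x_j$ lies on the $x_i$-side. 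The strict inequality $s_{k'}^k>s_j^k$ for $j\neq i,k'$ follows since $x_{k'}$ is the unique gainer on the affected edges. For linear independence, I would project $s^0,\ldots,s^{d_{\hat{T}}}$ onto the $d_{\hat{T}}+1$ coordinates indexed by $\{x_i\}\cup Y$, ordering columns so that column~$1$ is $x_i$ and column~$k+1$ is $x_{k'(k)}$, producing a matrix $M$ whose top row is $(\text{UB}(x_i,T),s_{k'(1)}^0,\ldots,s_{k'(d_{\hat{T}})}^0)$ and whose row $k\geq 1$ has strictly decreased entry at column~$1$, strictly increased entry at column~$k+1$, and entries weakly below row~$0$ elsewhere. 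Subtracting row~$0$ from every other row produces a block matrix whose $d_{\hat{T}}\times d_{\hat{T}}$ lower-right submatrix has strictly positive diagonal entries $s_{k'(k)}^k-s_{k'(k)}^0$, together with non-positive off-diagonal entries fitting the $A+B$ decomposition of Lemma~\ref{lem::rank} (the row-constant positive part $A$ and the diagonal-plus-matching-of-negatives part $B$, where the negative off-diagonals arise from the column-$1$ reductions and are matched uniquely per row and column by the $\phi$-bijection). Lemma~\ref{lem::rank} then yields invertibility of this block, and combined with the strictly positive pivot $\text{UB}(x_i,T)$ in the top-left this forces $M$ to have rank $d_{\hat{T}}+1$.

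Part~2 follows quickly: Part~1 places $d_{\hat{T}}+1$ linearly independent vectors in $\mathcal{P}(T,\ell)$, so the minimum dimension of a vector space containing $\mathcal{P}(T,\ell)$ is at least $d_{\hat{T}}+1$; conversely, Proposition~\ref{prop::MS}.4 gives $\dim\mathcal{P}(T,\ell)=d_{\hat{T}}$, and the convexity condition forces $\mathcal{P}(T,\ell)$ to lie on the affine hyperplane $\sum_{x\in X}s_x=\sum_{e\in E(T)}\ell(e)$, which does not pass through the origin once some edge has positive length, so the linear span is exactly one dimension larger than the affine hull. The hardest step I anticipate is the rigorous verification of $s_j^0\geq s_j^k$ for the remaining coordinates: one must trace the propagation of the re-maximisation through every member of $e_k$'s equivalence class and through every dependent edge of $E_c$, using Lemma~\ref{lem::Rsym} to confirm that the aggregate reallocations at $x_i$ and $x_{k'}$ amount exactly to $\mp R(\cdot,\cdot,E_f)$, and invoking Proposition~\ref{prop::RecMaxI}'s disjoint-$\hat{X}$-class property to exclude any unintended gain at another $Y$-taxon.
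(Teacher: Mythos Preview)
Your construction of $s^0,s^1,\ldots,s^{d_{\hat T}}$ from Proposition~\ref{prop::RecMaxI} and the verification of the displayed score formulas are correct and essentially coincide with the paper's approach. The gap is in your linear-independence argument.

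After you subtract row~$0$ from the remaining rows, the lower-right $d_{\hat T}\times d_{\hat T}$ block has strictly positive diagonal and non-positive off-diagonal entries, but this is \emph{not} an $A+B$ decomposition satisfying the hypotheses of Lemma~\ref{lem::rank}: that lemma requires $A$ to be a row-constant \emph{positive} matrix, and once you have subtracted $s^0$ there is no such part left. The paper instead applies Lemma~\ref{lem::rank} to the first $d_{\hat T}+1$ rows of $[\,s^0\mid s^1\mid\cdots\mid s^{d_{\hat T}}\,]$ \emph{before} any row operations, taking $A=[\,s^0\mid s^0\mid\cdots\mid s^0\,]$ (which is row-constant positive) and letting $B$ carry the column differences $s^k-s^0$.

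Even with that correction, your claim that the negative off-diagonals ``are matched uniquely per row and column by the $\phi$-bijection'' is not justified. A single re-maximisation at $e_k$ can decrease the scores of \emph{several} taxa in $Y$ whenever they share a neutrality condition at $e_k$, so a column $s^k-s^0$ may contain more than one negative entry among the $Y$-coordinates, violating the hypothesis on $B$ in Lemma~\ref{lem::rank}. The paper handles this by first imposing an edge ordering with $e_l\notin E(T[v_j])$ for all $l>j$ and then performing explicit column operations of the form
\[
\bigl(s^j-s^0\bigr)\;-\;\frac{s_k^0-s_k^j}{\,s_k^k-s_k^0\,}\,\bigl(s^k-s^0\bigr)
\]
to eliminate the surplus negatives one at a time, arriving at a matrix $B$ with exactly one positive and one negative entry per column to which Lemma~\ref{lem::rank} legitimately applies. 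Your sketch omits this reduction entirely, and without it the lemma cannot be invoked.

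Your Part~2 argument is fine and in line with the paper's.
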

\begin{proof}
\underline{1.:} Let $\varphi_{\hat{T}}^0$, $E_f\in\mathcal{F}(T)$ and $Y=\left\{x^e\,:\,e\in E_f\right\}$ with $|Y|=d_{\hat{T}}$ be the diversity index, set of independent edges and set of taxa, respectively, constructed in the proof of Proposition~\ref{prop::RecMaxI}. Let $s^0$ denote the image of $\varphi_{\hat{T}}^0$ and sort $s^0$ in such a way that $s_i^0$ remains the score of $x_i$ and the scores of taxa in $Y$ are among the first $d_{\hat{T}}+1$ entries of $s^0$. Such an ordering of $s^0$ always exists because $|Y|=d_{\hat{T}}$ and $x_i\neq x^e$ for all $e\in E_f$. Relabel all taxa accordingly such that $s_j^0$ is the score of taxon $x_j$ for all $j\in\{1,\dots,n\}$ and recall from Proposition~\ref{prop::RecMaxI} that $\gamma\left(x^e,e\right)=0$ for all $e\in E_f$ in $\varphi_{\hat{T}}^0$. Next, construct a set of $d_{\hat{T}}$ diversity index scores from~$s^0$: for each $j\in\left\{1,\dots,d_{\hat{T}}+1\right\},j\neq i$, increase $\gamma(x_j,e)$ for $x_j=x^e$, $e\in E_f$, to increase score $s_j^0$. This increase is always possible because $\gamma(x^e,e)=0$. Then, entry $s_k^0$ with $\gamma(x_k,e)>0$ decreases as well as entries $s_l^0$ for which $x_k,x_l$ and $e$ share a neutrality condition. Denote the resulting set of diversity index scores by $\left\{s^1,\dots,s^{d_{\hat{T}}}\right\}$. 
Observe that $s^j_i<$\,UB$(x_i,T)$ for $j<i$ and $s^{j-1}_i<$\,UB$(x_i,T)$ for $j>i$ can occur only if $P(x_i)\cap P(x_j)\neq\emptyset$. In this case we ensure that the increase from $s_j^0$ to $s_j^j$ is maximal and therefore 
\begin{align*}
s_i^j&=\begin{cases}\text{UB}\left(x_i,T\right)-R\left(x_i,x_j,E_f\right) &\text{if }j<i,\\
\text{UB}\left(x_i,T\right)-R\left(x_i,x_{j+1},E_f\right) &\text{otherwise}
\end{cases}~&~&\forall\,j\in\left\{1,\dots,d_{\hat{T}}\right\}.
\end{align*}
Since the transformation from $s^0$ to $s^j$ is defined by an edge $e_j=(u_j,v_j)\in E_f$, we can sort and relabel vectors $s^j$, $j\in\left\{1,\dots,d_{\hat{T}}\right\}$, and the corresponding taxa $x_j$ such that $e_l\notin E(T[v_j])$ for all $l\in\left\{j+1,\dots,d_{\hat{T}}\right\}$. Next, consider the matrix $M=\left[s^1-s^0|\dots|s^{d_{\hat{T}}}-s^0\right]$. By definition, each column of $M$ has exactly one positive entry and at least one negative entry among its first $d_{\hat{T}}+1$ rows. If a column $s^j-s^0$ has more than one negative entry, then there exist taxa $x_k,x_l\in X$ with $k,l\in\{i,j+1,\dots,d_{\hat{T}}\}$ such that $x_k,x_l$ and $e_j$ share a neutrality condition. Otherwise an increase in $\gamma(x^j,e_j)$ cannot decrease the score of more than one taxon by the construction of set $Y$. Since $x_j=x^{e_j}$ and $x_i$ is the only choice for $x_k$ or $x_l$ not in $Y$, we need to have $x_k=x^{e_k}$, $e_k\neq e_j$, or $x_l=x^{e_l}$, $e_l\neq e_j$. For $x_k=x^{e_k}$, $k\in\{j+1,\dots,d_{\hat{T}}\}$, we have
\begin{align*}
s_k^j-s_k^0&<0,~&~s_k^k-s_k^0&>0,\\
s_j^j-s_j^0&>0,~&~s_j^k-s_j^0&=0.
\end{align*}
The last equation follows from the fact that $e_k\notin E(T[v_j])$ and $\gamma(x^j,e_j)=0$ for $\varphi_{\hat{T}}^0$. Hence, the vector
\begin{align*}
\left(s^j-s^0\right)-\frac{s_k^0-s_k^j}{s_k^k-s_k^0}\left(s^k-s^0\right)
\end{align*}
has exactly one less negative entry than the pairwise distinct negative entries in $s^j-s^0$ and $s^k-s^0$ combined. We can repeat our arguments for $s^k-s^0$ in place of $s^j-s^0$ to consecutively remove negative entries from the rows of $M$ by linear column transformations. Thus, we conclude there exists a matrix $B$ with the same rank as the submatrix of $M$ consisting of its first $(d_{\hat{T}}+1)$ rows and columns, and exactly one negative and positive entry in each column. Let $A$ denote the submatrix of the first $(d_{\hat{T}}+1)$ rows of matrix $\left[s^0|\dots|s^0\right]$, and let $B_i$ be the matrix we obtain from $B$ by deleting the $i$-th column and append an all-zero column as the first column for all $i\in\{1,\dots,d_{\hat{T}}\}$. Then, from Lemma~\ref{lem::rank} we deduce that matrices $(A+B),(A+B_1),\dots,(A+B_{d_{\hat{T}}})$ all have full rank. This means, vectors $s^0,s^1,\dots,s^{d_{\hat{T}}}$ are linearly independent.\\~\\
\underline{2.:} For $x_i\in X$, we consider $E_f\in\mathcal{F}(T)$ to obtain a subset $B(x_i)\subset \mathcal{P}(T,\ell)$ of $d_{\hat{T}}+1$ linear independent vectors from~(1). Then we have \CorrB{$\dim\mathcal{P}(T,\ell)\geq d_{\hat{T}}$} because $\mathcal{P}(T,\ell)$ is a convex set. We show that $B(x_i)$ is a maximal linear independent set. 

Let $s\in \mathcal{P}(T,\ell)$. Without loss of generality we assume $s\in\text{ext}(T,\ell)$. For $j\in\{1,\dots,d_{\hat{T}}\}$ and some $e\in E_f$, observe that $s^j-s^0$ encodes an increase of $s_{j}^0$ with $x^e=x_j$ and possibly a decrease of some score $s_{k}^0>\,\text{LB}(x_{k},T)$. Recall that $\gamma(x_{j},e)=0$ for diversity index $\varphi_{\hat{T}}^0$. Hence, any choice of a positive value for $\gamma(x_{j},e)$ properly fits our definition of $s^j$. This means, for $e\in E_f$, $j\in\{1,\dots,d_{\hat{T}}\}$, the free choice of $\gamma(x_j,e)$, while fixing the rest of $\Gamma_T$ to yield the values of $s^0$, fully specifies the score $s^j$. Therefore, we conclude that making appropriate free choices for all $\gamma(x^e,e)$, $e\in E_f$, yields $s=\sum_{j=1}^{d_{\hat{T}}}s^j-(d_{\hat{T}}-1)s^0$. This means, $B(x_i)$ is a maximal linearly independent subset of $\mathcal{P}(T,\ell)$. Thus, we conclude that \CorrB{$\dim\mathcal{P}(T,\ell)=d_{\hat{T}}$}.

Now, we have constructed a basis $B(x_i)$ for the affine hull of $\mathcal{P}(T,\ell)$. However, this affine subspace does not include the null vector. Hence, any vector space that includes $\mathcal{P}(T,\ell)$ has to have dimension greater than \CorrB{$\dim\mathcal{P}(T,\ell)$}. Now, taking the linear span of $B(x_i)$ gives us a basis for the desired vector space with dimension \CorrB{$\dim\mathcal{P}(T,\ell)+1$}. Thus, our claim follows.
\end{proof}

\begin{figure*}[!t]
\centering
\includegraphics[scale=0.4]{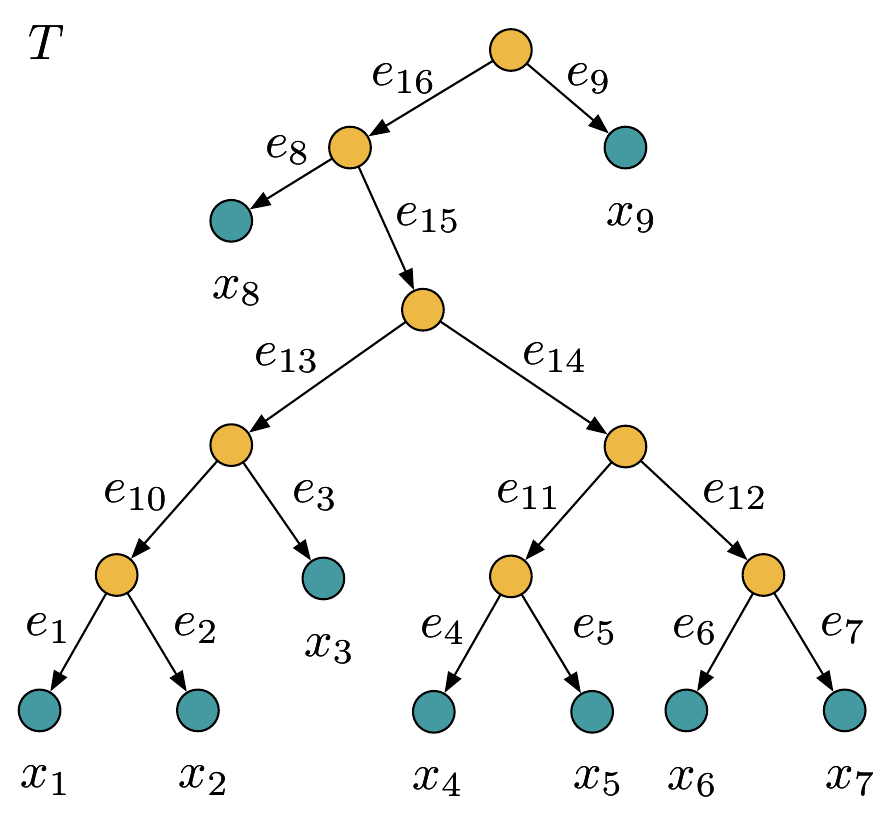}
\caption{A rooted $X$-tree $T$ for $X=\{x_1,x_2,\dots,x_9\}.$}\label{fig::df2}
\end{figure*}

We call any basis $B(x_i)\subset\,\text{ext}(T,\ell)$ of the vector space of minimal dimension that includes $\mathcal{P}(T,\ell)$ which has the same form as constructed in Proposition~\ref{prop::basis}.1 a \emph{canonical basis} and we denote the set of canonical bases by~$\mathcal{B}(T,\ell)$. We illustrate the construction of a canonical basis in the following example:

\begin{exmp}\label{ex::basis1}
Consider the rooted $X$-tree $T$ in Figure~\ref{fig::df2} and any edge length function~$\ell$ on $E(T)$. Let $\varphi_{(T,\ell)}$ be a diversity index. We have $\mathcal{F}(T)=\{\{e_{13},e_{15},e_{16}\}\}$ and therefore $d_{(T,\ell)}=3$. Then, for $x_i=x_1$, Proposition~\ref{prop::RecMaxI} yields the bijection $\phi :\{e_{13},e_{15},e_{16}\}\to\{x_3,x_4,x_8\}$ with $\phi(e_{13})=x_3$, $\phi(e_{15})=x_4$ and $\phi(e_{16})=x_8$. Based on these observations, let $B(x_1)=\{s^0,s^1,s^2,s^3\}\in\mathcal{B}(T,\ell)$. Then, for $i\in\{1,2\}$, $\gamma(x_i,e)=1/2$ with $e\in\{e_{13},e_{15},e_{16}\}$ defines~$s^0$. Furthermore, $\gamma(x_3,e_{13})=1$ defines $s^1-s^0$, $\gamma(x_4,e_{15})=1/4$ defines $s^2-s^0$ and $\gamma(x_8,e_{16})=1$ defines $s^3-s^0$.
\end{exmp}

Notice that in the proof of Proposition~\ref{prop::basis}.1 and Example~\ref{ex::basis1} we have associated a set of independent edges $$E_{B(x_i)}\in\{E_f\,:\,|P(x_i)\cap E_f|\,\text{maximum},\,E_f\in\mathcal{F}(T)\}$$ with each canonical basis $B(x_i)\in\mathcal{B}(T,\ell)$. 
This leads to the following exchange property of canonical bases:

\begin{corollary}\label{cor::canonBasis}
Let $\hat{T}=(T,\ell)$ be a phylogenetic $X$-tree with $d_{\hat{T}}\geq 1$, $x_i\in X$, $B(x_i)=\left\{s^0,s^1,\dots,s^{d_{\hat{T}}}\right\}\in\mathcal{B}(T,\ell)$. Then, there exists $j\in\left\{1,\dots,d_{\hat{T}}+1\right\}$, $j\neq i$, such that interchanging the role of $x_j$ and $x_i$ in $B(x_i)$ yields another basis $B(x_j)=\left\{t^0,t^1,\dots,t^{d_{\hat{T}}}\right\}\in\mathcal{B}(T,\ell)$ with $t_i^0=\,\text{LB}(x_i,T)$.
\end{corollary}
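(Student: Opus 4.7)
The plan is to identify an appropriate $j$ explicitly. If $P(x_i)\cap E_{B(x_i)}=\emptyset$, then $\text{UB}(x_i,T)=\text{LB}(x_i,T)$, so $s_i^0=\text{LB}(x_i,T)$ holds already and any $j$ works; I assume otherwise. Let $e^*$ be the edge in $P(x_i)\cap E_{B(x_i)}$ closest to the root and let $\phi:E_{B(x_i)}\to Y$ be the bijection used to build $B(x_i)$ via Prop~\ref{prop::basis}.1 and Prop~\ref{prop::RecMaxI}. Set $x_j:=\phi(e^*)$. By the construction in the proof of Prop~\ref{prop::RecMaxI}, $x_j$ lies in the child subtree of $e^*$'s head $v$ opposite to $x_i$, so $e^*\in P(x_j)$ as well. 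I then apply Prop~\ref{prop::basis}.1 to $x_j$ to obtain $B(x_j)=\{t^0,\dots,t^{d_{\hat{T}}}\}$, choosing $E_{B(x_j)}$ to contain $e^*$ (permissible since $e^*\in P(x_j)$) and the associated bijection $\phi':E_{B(x_j)}\to Y'$ so that $\phi'(e^*)=x_i$; this is a valid choice because $x_i$ lies in the child subtree of $v$ opposite to $x_j$, which is precisely where the arg-min of the proof of Prop~\ref{prop::RecMaxI} selects $x^{e^*}$ from, and the arg-min can be resolved to $x_i$ within its equivalence class.

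It then remains to verify $t_i^0=\text{LB}(x_i,T)$. Let $c_1,c_2$ denote the children of $v$ with $x_i\in X[c_1]$ and $x_j\in X[c_2]$. In the diversity index $\tilde{\varphi}^0$ realizing $t^0$, maximizing $\gamma(x_j,e^*)$ drives the single free choice at $e^*$ to allocate unit mass to the $c_2$-side, so $\gamma(x,e^*)=0$ for every $x\in X[c_1]$; in particular $\gamma(x_i,e^*)=0$. The consistency condition at $v$, combined with non-negativity of $\gamma$, propagates this zero to every ancestor edge of $e^*$ in $P(x_i)$: for each such edge $e$, $\sum_{x\in X[c_1]}\gamma(x,e)=k\sum_{x\in X[c_1]}\gamma(x,e^*)=0$, whence $\gamma(x_i,e)=0$.

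For the remaining edges of $P(x_i)$ lying below $e^*$ in $T[c_1]$, I will exploit the flexibility in selecting the sets $\mathcal{I}(\cdot,\cdot,\cdot)$ in recursion~\eqref{riRec} (as illustrated in Example~\ref{ex::Ixi}) so that, at every recursion level within the connected component containing $x_i$, a representative different from $x_i$ is chosen. Maximizing that alternative representative's partial score then forces $\gamma(x_i,e')=0$ at every free edge $e'\in P(x_i)$ below $e^*$, and consistency cascades this to every dependent edge in $P(x_i)$ with non-balanced $T[v']$. The main obstacle I foresee is justifying that such a representative distinct from $x_i$ can be chosen at every recursion level; I expect this to follow from the fact that $E_{B(x_i)}$ was chosen to maximize $|P(x_i)\cap E_f|$, for otherwise $x_i$ would sit alone in some equivalence class at a recursion step, yielding an additional free edge in $P(x_i)$ that ought to have been included in $E_{B(x_i)}$ and contradicting the maximality. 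Once the propagation is complete, $\tilde{\varphi}^0(x_i)$ reduces to the balanced-subtree contributions on $P(x_i)$, which is exactly $\text{LB}(x_i,T)$.
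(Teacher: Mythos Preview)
Your overall strategy---select a specific edge of $E_{B(x_i)}$ on $P(x_i)$, take $x_j$ to be the taxon that the bijection $\phi$ assigns to that edge, and then build $B(x_j)$ via Proposition~\ref{prop::basis}.1---matches the paper's. However, you pick $e^*$ to be the edge of $P(x_i)\cap E_{B(x_i)}$ \emph{closest to the root}, whereas the paper takes the opposite extreme: it chooses $(u,v)=\arg\max\{|P(e)\cap P(x_i)|:e\in E_{B(x_i)}\}$, i.e.\ the edge of $E_{B(x_i)}$ lying \emph{deepest} along $P(x_i)$. This single difference is what produces the gap you yourself flag.

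With the paper's deepest-edge choice, once one passes below $(u,v)$ the path $P(x_i)$ carries no further edges of $E_{B(x_i)}$ (by maximality of $|P(x_i)\cap E_{B(x_i)}|$ and the definition of ``deepest''). Hence in the $B(x_j)$ construction, the component on the $x_i$-side of $v$ offers $x_i$ no free choices, and the paper can invoke equation~\eqref{bij::1} directly to conclude $t_i^0=\text{LB}(x_i,T)$ without any further recursion argument. Your upward consistency propagation is correct and nicely explicit, but your downward part---``choose at every recursion level a representative different from $x_i$''---is not justified. The canonical basis construction of Proposition~\ref{prop::RecMaxI} \emph{maximizes} at every recursion step; you do not get to freely pick representatives, only to break ties. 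Your sketch conflates ``$x_i$ is the unique maximizer at some level'' with ``$x_i$ sits alone in an equivalence class,'' and the maximality of $|P(x_i)\cap E_{B(x_i)}|$ rules out only the latter. With the shallowest-edge choice there can be several free edges on $P(x_i)$ below $e^*$, and nothing prevents $x_i$ from being the strict maximizer at some recursion depth, in which case it would receive strictly more than $\text{LB}(x_i,T)$ in $t^0$.

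The fix is simply to reverse your choice of $e^*$ to the deepest edge; then the downward issue disappears and your consistency argument for the edges above $e^*$ (which are then \emph{all} of the non-balanced edges on $P(x_i)$) already finishes the proof.
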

\begin{proof}
Let $(u,v)=\,\text{arg\,max}\left\{|P(e)\cap P(x_i)|\,:\,e\in E_{B(x_i)}\right\}$ and let $x_j=x^{(u,v)}$. By definition of $x^{(u,v)}$ we know that $x_j\neq x_i$. From Proposition~\ref{prop::basis}.1 we obtain a basis $B(x_j)=\left\{t^0,t^1,\dots,t^{d_{\hat{T}}}\right\}\in\mathcal{B}(T,\ell)$. 
By our choice of $x_j$ we deduce that $x_i$ satisfies one of the equations~\eqref{bij::1} for $t^0$. Hence, $x_i$ attains its minimum allocation from $T$ in $t^0$.
\end{proof}

In addition, Example~\ref{ex::basis1} illustrates that the construction of a canonical basis necessitates the existence of the bijection in Propositions~\ref{prop::RecMaxI}. This leads us to combine Propositions~\ref{prop::RecMaxI},~\ref{prop::STlc2} and~\ref{prop::basis} to associate each canonical basis with a valid description of $\mathcal{P}(T,\ell)$:
\begin{corollary}\label{cor::valid}
Let $\hat{T}=(T,\ell)$ be a rooted phylogenetic $X$-tree with $d_{\hat{T}}\geq 1$, let $x_i\in X$, $B(x_i)\in\mathcal{B}(T,\ell)$ and let $Y\subseteq X$ be the taxa associated with the first $d_{\hat{T}}+1$ coordinates in $B(x_i)$. Then, there exists $x_k\in Y$, $k\neq i$, such that
\begin{align*}
\text{proj}_{Y\setminus\{x_k\}}(T,\ell)\subseteq\bigcap_{Z\subseteq Y\setminus\{x_k\},\,x_i\in Z}\mathcal{S}(x_k,Y,Z)
\end{align*}
for
\begin{align*}
\mathcal{S}(x_k,Y,Z)=\left\{a\in\mathbb{R}_{\geq 0}^{Y\setminus\{x_k\}}\,:\,\sum_{x_j\in Z}a_j\leq r_i\left(T(Z),E_{B(x_i)}\right)-\sum_{x_j\in Z}\text{LB}\left(x_j,Y,T\right)\right\}.
\end{align*}
\end{corollary}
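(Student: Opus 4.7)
The plan is to combine Proposition~\ref{prop::STlc2} with Corollary~\ref{cor::canonBasis} and then strengthen the resulting halfspace inequalities by showing that, once $x_k$ is removed, the focal taxon $x_i$ attains the maximum in the definition~\eqref{riRecMax} of $r(T(Z),E_{B(x_i)})$. Concretely, I would first apply Corollary~\ref{cor::canonBasis} to produce $x_k\in Y$, $k\neq i$, associated with the edge $(u,v)\in E_{B(x_i)}$ maximising $|P(e)\cap P(x_i)|$; intuitively, $x_k$ is the unique taxon in $Y\setminus\{x_i\}$ whose root-path shares the deepest ancestor edge with $P(x_i)$.

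Next, I would verify that $Y\setminus\{x_k\}$ satisfies the hypotheses of Proposition~\ref{prop::STlc2}: it has cardinality $d_{\hat T}$, and the bijection $\phi\colon E_{B(x_i)}\to Y\setminus\{x_i\}$ supplied by Proposition~\ref{prop::RecMaxI} is readily modified to a bijection $\phi'\colon E_{B(x_i)}\to Y\setminus\{x_k\}$ by reassigning the edge $\phi^{-1}(x_k)$ (which lies on $P(x_i)$, by the choice of $x_k$) to $x_i$ and leaving every other assignment untouched; the free choice of the numbers $\gamma(\phi'(e),e)$ still fully determines $\Gamma_T$, since the consistency conditions propagate as before. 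Proposition~\ref{prop::STlc2} then yields
\[
\sum_{x_j\in Z} a_j \;\leq\; r(T(Z),E_{B(x_i)})
\]
for every $Z\subseteq Y\setminus\{x_k\}$ and every $a\in\text{proj}_{Y\setminus\{x_k\}}(T,\ell)$, after checking that for the relevant subsets the coefficients $N(x_j,e,Y)$ and $N(x_j,e,Y\setminus\{x_k\})$ agree, because $\hat{X}_{x_k}$ is absorbed into $\bigcup_{y\in Y\setminus\{x_k\}}\hat{X}_y$ through the shared deep ancestor edge that characterises $x_k$.

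The crux of the proof is the claim that for every $Z\subseteq Y\setminus\{x_k\}$ with $x_i\in Z$ the maximum in~\eqref{riRecMax} is attained at $x_i$, i.e.\
\[
r(T(Z),E_{B(x_i)}) \;=\; r_i(T(Z),E_{B(x_i)}) \;-\; \sum_{x_j\in Z}\text{LB}(x_j,Y,T).
\]
Substituting this identity into the inequality above produces precisely the defining inequality of $\mathcal{S}(x_k,Y,Z)$, which concludes the proof. I expect this claim to be the principal obstacle, since Proposition~\ref{prop::RecMaxI} only guarantees the maximality of $r_i$ on the full set $Y\cup\{x_i\}$. My plan is an induction on $|Z|$, unfolding~\eqref{riRec} in parallel for $r_i$ and for an arbitrary competitor $r_j$ with $x_j\in Z\setminus\{x_i\}$: since $B(x_i)$ was chosen so that $|P(x_i)\cap E_f|$ is maximal over $E_f\in\mathcal{F}(T)$, the path $P(x_i)$ already collects as many independent edges as any competing path, and the removal of $x_k$ eliminates the one taxon whose root-path could match this overlap. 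The delicate bookkeeping will be to track, across each split of $T(Z)$ into components of $F(Z-x_j)$, how the sets $\mathcal{I}(x_i,\cdot,E_{B(x_i)})$ and $\mathcal{I}(x_j,\cdot,E_{B(x_i)})$ differ and to invoke the defining property of $x_k$ to close out the inequality.
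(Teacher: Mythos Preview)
Your overall strategy matches the paper's: select $x_k$, build the modified bijection $\phi'$, apply Proposition~\ref{prop::STlc2} to $Y\setminus\{x_k\}$, and then argue that $x_i$ realises the maximum in~\eqref{riRecMax} for every $Z\subseteq Y\setminus\{x_k\}$ containing $x_i$. The difference lies entirely in how you handle this last step. You propose an induction on $|Z|$ that directly pits $r_i$ against each competitor $r_j$ by unfolding the recursion~\eqref{riRec} and tracking the sets $\mathcal{I}(\cdot,\cdot,E_{B(x_i)})$ through the components of $F(Z-x_j)$. The paper instead \emph{reduces} to Proposition~\ref{prop::RecMaxI}: it deletes the leaf $x_k$ together with all taxa sharing a neutrality condition with respect to the edge $\phi^{-1}(x_k)$, obtaining a tree $T'$ with $d_{(T',\ell)}=d_{\hat T}-1$; then $E_{B(x_i)}\setminus\{\phi^{-1}(x_k)\}$ and $Y\setminus\{x_i,x_k\}$ satisfy the hypotheses of Proposition~\ref{prop::RecMaxI} for $T'$, and since $T'(Y\setminus\{x_k\})=T(Y\setminus\{x_k\})$ the identity~\eqref{max::ri} transfers back to $T$. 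Iterating this one-taxon removal handles every $Z$. This route reuses machinery already in place and sidesteps the bookkeeping you yourself flag as delicate; your direct induction would in effect reprove the content of Proposition~\ref{prop::RecMaxI} inside each recursive component.

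One minor remark: invoking Corollary~\ref{cor::canonBasis} to locate $x_k$ is legitimate but heavier than needed. The paper extracts $x_k$ directly from the fact that $|P(x_i)\cap E_{B(x_i)}|$ is maximal over $\mathcal{F}(T)$, which is built into the definition of $E_{B(x_i)}$ and already forces some edge of $E_{B(x_i)}$ to lie on $P(x_i)$; reassigning that edge to $x_i$ gives $\phi'$ immediately.
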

\begin{proof}
From Proposition~\ref{prop::basis}.1 we know that sets $E_{B(x_i)}$ and $Y\setminus\{x_i\}$ fit into Proposition~\ref{prop::RecMaxI} as sets $E_f$ and $Y$, respectively. Hence,
\begin{align}\label{eq::rmaxY}
r\left(T(Y),E_{B(x_i)}\right)=r_i\left(T(Y),E_{B(x_i)}\right)-\sum_{x_j\in Y}\text{LB}\left(x_j,Y,T\right).
\end{align}
Moreover, we obtain a bijective map $\phi :E_{B(x_i)}\to Y\setminus\{x_i\}$ with $e\in P(\phi(e))$ for all $e\in E_{B(x_i)}$ from Proposition~\ref{prop::RecMaxI}. First, this means a free choice of numbers $\gamma(\phi(e),e)$ does fully specify the corresponding matrix $\Gamma_T$ of a diversity index. Then, since $|P(x_i)\cap E_{B(x_i)}|$ is maximum in $\mathcal{F}(T)$ by definition of $E_{B(x_i)}$, there exists a bijective map $\phi':E_{B(x_i)}\to Y\setminus\{x_k\}$ for some $x_k\in Y$, $k\neq i$, such that a free choice of numbers $\gamma(\phi'(e),e)$ does fully specify $\Gamma_T$. Therefore, Proposition~\ref{prop::STlc2} yields
\begin{align*}
\text{proj}_{Y\setminus\{x_k\}}(T,\ell)\subseteq\left\{a\in\mathbb{R}_{\geq 0}^{Y\setminus\{x_k\}}\,:\,\sum_{x_j\in Z}a_j\leq r\left(T(Z),E_{B(x_i)}\right)~~~\forall\,Z\subseteq Y\setminus\{x_k\}\right\}.
\end{align*}
Secondly, we can restrict bijection $\phi$ to have a domain and co-domain of size $d_{\hat{T}}-1$ by removing leaf $x_k$ from $T$ and all taxa which share a neutrality condition with respect to edge $\phi^{-1}(x_k)$. Let $T'$ denote the resulting $X'$-tree (up to edge subdivisions) for which $X\cap X'$ is the set of taxa appearing in both $T$ and $T'$. By construction $d_{(T',\ell)}=d_{\hat{T}}-1$, and therefore $(T',\ell)$, $E_{B(x_i)}\setminus\left\{\phi^{-1}(x_k)\right\}$ and $Y\setminus\{x_i,x_k\}$ fit into Proposition~\ref{prop::RecMaxI} as $\hat{T}$, $E_f$ and $Y$, respectively. Since $T'(Y\setminus\{x_k\})=T(Y\setminus\{x_k\})$, we can conclude from further restrictions of bijection $\phi$ that equation~\eqref{eq::rmaxY} holds for all $Z\subseteq Y$ with $x_i\in Z$. Thus, our claim follows.
\end{proof}

We will use Corollary~\ref{cor::valid} to study a proper projection of $\mathcal{P}(T,\ell)$ for which Proposition~\ref{prop::basis} enables us to assess all independence relations between diversity indices.

\subsection{All facets of the diversity index polytope} 
For $Y\subseteq X$ and $x_k\in Y$, let $\mathcal{Z}(x_k,Y)$ denote the subsets $Z\subseteq Y\setminus\{x_k\}$ such that $Y\setminus Z$ are in the same connected component of $F(Y-Z)$. Now, we can provide the facets of $\mathcal{P}(T,\ell)$:
\begin{theorem}\label{thm::facets}
Let $\hat{T}=(T,\ell)$ be a rooted phylogenetic $X$-tree with $d_{\hat{T}}\geq 1$. Then, for $s\in \mathcal{P}(T,\ell)$, 
\begin{enumerate}
\item the inequalities
\begin{align}
s_i&\geq\,\text{LB}(x_i,T)~&~&\forall\,i\in\{1,\dots,n\}\label{facet1}
\end{align}
are facet-defining for $\mathcal{P}(T,\ell)$.
\item $x_i\in X$, $B(x_i)\in\mathcal{B}(T,\ell)$ and $Y\subseteq X$ the taxa associated with the first $d_{\hat{T}}+1$ coordinates in $B(x_i)$, there exists $x_k\in Y$, $k\neq i$, such that the inequalities
\begin{align}
\sum_{x_j\in Z}N(x_j)\cdot\left(s_j-\,\text{LB}\left(x_j,T\right)\right)\leq r\left(T(Z),E_{B(x_i)}\right)\label{facet2}
\end{align}
for all $Z\in\mathcal{Z}(x_k,Y)$ with $x_i\in Z$ are facet-defining for $\mathcal{P}(T,\ell)$.
\end{enumerate}
\end{theorem}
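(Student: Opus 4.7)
The plan is to exhibit, for each inequality, a set of $d_{\hat{T}}$ affinely independent points of $\mathcal{P}(T,\ell)$ meeting it with equality; since $\dim\mathcal{P}(T,\ell)=d_{\hat{T}}$ by Proposition~\ref{prop::basis}.2, this certifies the inequality as facet-defining. In both parts the certificates are assembled from the canonical basis machinery of Proposition~\ref{prop::basis} together with the exchange property of Corollary~\ref{cor::canonBasis}.

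For part~1, I fix $x_i$; if $P(x_i)\cap U_f(T)=\emptyset$ then $s_i$ is constant on $\mathcal{P}(T,\ell)$ and the inequality is an implicit equality, so I may assume $P(x_i)\cap E_f\ne\emptyset$ for some $E_f\in\mathcal{F}(T)$. I parametrise $\mathcal{P}(T,\ell)$ by the $d_{\hat{T}}$ free choices $(\gamma(x^e,e))_{e\in E_f}$ supplied by Proposition~\ref{prop::degfree}, and let $e^\star$ be the edge of $P(x_i)\cap E_f$ with $|P(e^\star)|$ maximum, i.e.\ the deepest free edge on the path to $x_i$. The condition $s_i=\text{LB}(x_i,T)$ demands $\gamma(x_i,e)=0$ for every $e\in P(x_i)\cap E_f$, but the consistency condition~\eqref{def::consistency}, summed over the child subtree containing $x_i$, forces $\gamma(x_i,e)=0$ on every such $e$ above $e^\star$ as soon as $\gamma(x_i,e^\star)=0$. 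Hence $s_i=\text{LB}(x_i,T)$ collapses to the single linear equation $\gamma(x_i,e^\star)=0$, leaving the remaining $d_{\hat{T}}-1$ free choices on $E_f\setminus\{e^\star\}$ unconstrained and so giving $\dim F_i=d_{\hat{T}}-1$ for $F_i=\mathcal{P}(T,\ell)\cap\{s:s_i=\text{LB}(x_i,T)\}$.

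For part~2, I fix $x_i$, a canonical basis $B(x_i)=\{s^0,\ldots,s^{d_{\hat{T}}}\}$ and $Y$, and take $x_k\in Y$ as supplied by Corollary~\ref{cor::valid}. For $Z\in\mathcal{Z}(x_k,Y)$ with $x_i\in Z$, let $F_Z$ denote the hyperplane defined by equality in~\eqref{facet2}. First, $s^0\in F_Z$, because by Proposition~\ref{prop::RecMaxI} the diversity index $\varphi^0_{\hat{T}}$ attached to $s^0$ simultaneously realises the recursion~\eqref{riRec} on $T(Z)$. For $m\in\{1,\ldots,d_{\hat{T}}\}$ the basis vector $s^m$ is obtained from $s^0$ by raising $\gamma(x_m,e_m)$, which by Lemma~\ref{lem::Rsym} redistributes weight between $x_m$ and the taxa tied to it through the neutrality/consistency class of $e_m$ in exactly the ratios $N(x_j)$ that weight~\eqref{facet2}. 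The defining property of $\mathcal{Z}(x_k,Y)$---that $Y\setminus Z$ lies in a single connected component of $F(Y-Z)$---guarantees that for every $m$ with $x_m\ne x_k$ both the raised taxon and the taxa whose coordinates decrease lie on the same side of the bipartition $Z$ versus $Y\setminus Z$; consequently the weighted sum in~\eqref{facet2} is unchanged by $s^m-s^0$, placing $s^m\in F_Z$. The single basis vector whose perturbation crosses the boundary corresponds to $x_m=x_k$, which is exactly the coordinate dropped by Corollary~\ref{cor::valid}. Removing it leaves $d_{\hat{T}}$ affinely independent points of $B(x_i)$ on $F_Z$, so $\dim F_Z=d_{\hat{T}}-1$.

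The main obstacle is the reallocation bookkeeping in part~2. The maximising set $\mathcal{I}(x_i,\cdot,\cdot)$ is not unique (Example~\ref{ex::Ixi}), so the raised-versus-compensating pair attached to each $s^m$ is only determined up to swaps within neutrality/consistency classes, and I must check that these swaps can be aligned with the bipartition $Z$ versus $Y\setminus Z$. The role of Corollary~\ref{cor::valid} is precisely to produce an $x_k$ that concentrates every cross-boundary perturbation into the single coordinate $s_k$, so that the face obtained after dropping $s_k$ is spanned by the remaining basis vectors; verifying this alignment uniformly over all $Z\in\mathcal{Z}(x_k,Y)$ with $x_i\in Z$ is where the bulk of the work lies.
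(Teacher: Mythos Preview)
Your outline captures the right high-level strategy—exhibit $d_{\hat T}$ independent witnesses on each face—but both parts have genuine gaps, and in Part~2 the gap is structural, not just bookkeeping.

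\textbf{Part 1.} Your reduction of $s_i=\text{LB}(x_i,T)$ to the single equation $\gamma(x_i,e^\star)=0$ relies on the claim that consistency forces $\gamma(x_i,e)=0$ for every free $e$ above $e^\star$. This is true, but it needs the consistency constant $k$ to be nonzero, which in turn requires $\sum_{x\in X[w]}\gamma(x,e^\star)>0$ for the sibling $w$ of the subtree containing $x_i$; you should say why. More seriously, the inference ``one equation in the free choices $\Rightarrow$ codimension one in $\mathcal{P}(T,\ell)$'' presumes that the free choices give a \emph{linear} parametrisation of the polytope. They do not: under consistency, $\gamma(x,e)$ for an edge $e$ higher up is a \emph{product} of split ratios along the path, so the map from free choices to $s$ is multilinear, and a single polynomial equation in that chart can cut out a set of the wrong dimension. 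The paper avoids this entirely by appealing to Corollary~\ref{cor::canonBasis}: it produces a canonical basis $B(x_j)$ with $t_i^0=\text{LB}(x_i,T)$ and then reads off from the inequalities in Proposition~\ref{prop::basis}.1 that $d_{\hat T}$ of its vectors have $i$-th coordinate equal to $\text{LB}(x_i,T)$.

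\textbf{Part 2.} Your plan---drop the single basis vector attached to $x_k$ and keep the rest of $B(x_i)$ on $F_Z$---coincides with the paper's argument only when the recursion~\eqref{riRec} has depth $\delta=1$, which forces $|Z|=d_{\hat T}$ (this is exactly what the paper proves in its base case). For $\delta>1$ the claim that every perturbation $s^m-s^0$ with $x_m\neq x_k$ stays on one side of the bipartition $Z$ versus $Y\setminus Z$ is false in general. Take $x_m\in Y\setminus(Z\cup\{x_k\})$ with $e_m=\phi^{-1}(x_m)\in P(x_i)$; such $m$ exist whenever $|P(x_i)\cap E_f|\geq 2$ and $Z$ is small enough. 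In $s^0$ the edge $e_m$ is allocated to $x_i$, so raising $\gamma(x_m,e_m)$ strictly lowers $s_i$; since $x_i\in Z$ but $x_m\notin Z$, and (by the last clause of Proposition~\ref{prop::RecMaxI}) no two members of $Y$ share a set $\hat X_{\cdot,e}$, the left side of~\eqref{facet2} strictly decreases and $s^m\notin F_Z$. This is precisely why the paper does not reuse $B(x_i)$ for $\delta>1$: it instead runs an induction on $\delta$, building a set $\mathcal S(Z)$ by splicing $s^0$ with vectors drawn from canonical bases $B(x_j)$ of the subtrees $T_j(Z-x_i)$ (the components of $F(Z-x_i)$), together with one extra family coming from the single component of $F(Y-Z)$ containing $Y\setminus Z$. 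The ``alignment'' you defer is therefore not a verification step but a different construction; the vectors you propose to use simply are not all on the face.
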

\begin{proof}
\underline{1.:} By definition we know that inequalities~\eqref{facet1} are valid. For $x_i\in X$, let $B(x_i)=\left\{s^0,s^1,\dots,s^{d_{\hat{T}}}\right\}\in\mathcal{B}(T,\ell)$. Then, we know from Corollary~\ref{cor::canonBasis} that there exists a basis $B(x_j)=\left\{t^0,t^1,\dots,t^{d_{\hat{T}}}\right\}\in\mathcal{B}(T,\ell)$ with $t_i^0=\text{LB}(x_i,T)$. By definition of $B(x_j)$ (see Proposition~\ref{prop::basis}.1), we have 
\begin{align*}
t_l^0&\geq t_l^k,~t_k^k>t_l^k~&~&\forall\,l\in\{1,\dots,d_{\hat{T}}+1\},k\in\{1,\dots,j-1\},l\neq k,\\
t_l^0&\geq t_l^k,~t_{k+1}^k>t_l^k~&~&\forall\,l\in\{1,\dots,d_{\hat{T}}+1\},k\in\{j,\dots,d_{\hat{T}}\},l\neq k+1.
\end{align*}
Hence, $\text{LB}(x_i,T)=t_i^0\geq t_i^k$ for all $k\in\{1,\dots,j-1\},i\neq k$ and $k\in\{j,\dots,d_{\hat{T}}\},i\neq k+1$. This means, $x_i$ attains a score of LB$(x_i,T)$ in exactly $d_{\hat{T}}$ elements of $B(x_j)$. In other words, there exists a subset $B_i\subset B(x_j)$ of $d_{\hat{T}}$ linearly independent vectors with a score of LB$(x_i,T)$ for $x_i$ in every element of $B_i$. This means, $B_i$ constitutes a set of $(d_{\hat{T}}-1)$ affinely independent vectors satisfying~\eqref{facet1} with equality. Thus, inequalities~\eqref{facet1} are facet-defining.\\~\\
\underline{2.:} From Corollary~\ref{cor::valid} we know that there exists $x_k\in Y$, $k\neq i$, such that inequalities~\eqref{facet2} are valid for all $Z\subseteq Y\setminus\{x_k\}$ with $x_i\in Z$. Moreover,
\begin{align}
r\left(T(Z),E_{B(x_i)}\right)=r_i\left(T(Z),E_{B(x_i)}\right)-\sum_{x_i\in Z}\text{LB}(x_i,Y,T).\label{part2::r}
\end{align}
We prove our claim by induction on the recursion depth $\delta$ of formula $r_i(T(Z),E_{B(x_i)})$. Let $Z\in\mathcal{Z}(x_k,Y)$ with $x_i\in Z$. Let $Z_j$ denote the leafset of $T_j(Z-x_i)$ and let $E_f=E_{B(x_i)}$.\\

\noindent\underline{Let $\delta =0$:} Then, $\mathcal{R}\left(x_i,Z\right)=\emptyset$. This means, there exist no taxon $x_j$ with $P(x_j)\cap E_f\neq\emptyset$ which is present in $F(Z-x_i)$. This leads to a contradiction because $d_{\hat{T}}\geq 1$ implies $E_f\neq\emptyset$. Therefore, $\delta =0$ cannot occur.\\

\noindent\underline{Let $\delta =1$:} By definition, for $E^-=E_f\cap E(T(Z))$,
\begin{align*}
&r_i\left(T(Z),E_f\right)\\
&=\sum_{e\in \mathcal{E}^*(x_i,Y,E_f,E^-,T(Z))}N(x_i,e,Y)\cdot\ell(e)+\sum_{x_j\in \mathcal{R}\left(x_i,Z\right)}r_j\left(T_j(Z-x_i),E_{f|i}\right).
\end{align*}
Since $\delta =1$, for $x_j\in \mathcal{R}\left(x_i,Z\right)$, we have $\mathcal{R}\left(x_j,Z\right)=\emptyset$. This means, $T_j(Z-x_i)$ does not contain any edges from $E_f$. Hence, $E_{f|i}=\emptyset$ and no paths in $T$ outside $T_j(Z-x_i)$ contain independent edges in the same equivalence class as independent edges in $T_j(Z-x_i)$. Equivalently, for $E_j^-=E_{f|i}\cap E(T(Z_j))$, we have $$\mathcal{E}(x_j,Y,E_f,E_j^-,T(Z_j))=\emptyset.$$ Thus, we conclude
\begin{align*}
r_j\left(T_j(Z-x_i),E_{f|i}\right)&=\,\text{LB}(x_j,Y,T)~&~&\forall\,x_j\in \mathcal{R}\left(x_i,Z\right).
\end{align*}
This means, for $x_j\in Z$ we have $\hat{X}_{x_j,e}=\hat{X}_{x_j}$ for all $e\in E_f$. Therefore, since $x_i\in Y$,
\begin{align*}
N(x_i,e,Y)=\left|\hat{X}_{x_i,e}\cap\bigcup_{y\in Y}\hat{X}_y\right|/\left|\hat{X}_{x_i,e}\right|=1.
\end{align*}
Hence,
\begin{align}
r_i\left(T(Z),E_f\right)&=\sum_{e\in \mathcal{E}^*(x_i,Y,E_f,E^-,T(Z))}\ell(e)+\sum_{x_j\in\mathcal{R}\left(x_i,Z\right)}\text{LB}(x_j,Y,T)\nonumber\\
&=N(x_i)\cdot\left[\text{UB}(x_i,T)-\,\text{LB}(x_i,T)\right]+\sum_{x_j\in Z}\text{LB}(x_j,Y,T).\label{delta1}
\end{align}
Next, we can apply Lemma~\ref{lem::Rsym} for $x_j,x_k\in Z$, $j\neq k$, to get
\begin{align}\label{Rsym}
N(x_j)\cdot R\left(x_j,x_k,E_f\right)=N(x_k)\cdot R\left(x_k,x_j,E_f\right).
\end{align}
In addition, for $B(x_i)=\left\{s^0,s^1,\dots,s^{d_{\hat{T}}}\right\}$ we know from Proposition~\ref{prop::basis}.1 that
\begin{align*}
s_i^0&=\,\text{UB}\left(x_i,T\right),\\
s_i^h&=\,\text{UB}\left(x_i,T\right)-R\left(x_i,x_h,E_f\right)~~\forall\,h\in\{1,\dots,i-1\},\\
s_i^h&=\,\text{UB}\left(x_i,T\right)-R\left(x_i,x_{h+1},E_f\right)~~\forall\,h\in\{i,\dots,d_{\hat{T}}\}.
\end{align*}
Hence, from~\eqref{part2::r} and~\eqref{delta1} we deduce that
\begin{align}
r\left(T(Z),E_f\right)&=N(x_i)\cdot\left[\text{UB}(x_i,T)-\,\text{LB}(x_i,T)\right]\nonumber\\
&=\sum_{x_j\in Z}N(x_j)\cdot\left(s_j^0-\,\text{LB}(x_j,T)\right).\label{r::sh}
\end{align}
Let $h\in\left\{1,\dots,d_{\hat{T}}\right\}$. Without loss of generality $h\in\{1,\dots,i-1\}$. We have
\begin{align*}
N(x_i)\cdot\left(s_i^h-\,\text{LB}(x_i,T)\right)&=N(x_i)\cdot\left(\text{UB}(x_i,T)-\,\text{LB}(x_i,T)-R\left(x_i,x_h,E_f\right)\right)\\
&=N(x_i)\cdot\left(s_i^0-\,\text{LB}(x_i,T)\right)-N(x_i)\cdot R\left(x_i,x_h,E_f\right).
\end{align*}
The same arguments apply when we reverse the role of $i$ and $h$. Hence, using~\eqref{Rsym} we infer that
\begin{align*}
&N(x_i)\cdot\left(s_i^h-\,\text{LB}(x_i,T)\right)+N(x_h)\cdot\left(s_h^h-\,\text{LB}(x_h,T)\right)\\
=\,&N(x_i)\cdot\left(s_i^0-\,\text{LB}(x_i,T)\right)+N(x_h)\cdot\left(s_h^0-\,\text{LB}(x_h,T)\right).
\end{align*}
In addition, for $j\in\{1,\dots,d_{\hat{T}}+1\}$, $i\neq j\neq h$, we have $s_j^h=s_j^0$ because
\begin{align*}
\text{LB}(x_j,T)=s_j^0\geq s_j^h
\end{align*}
follows from Proposition~\ref{prop::basis}.1. Thus,
\begin{align*}
\sum_{x_j\in Z}s_j^0&=\sum_{x_j\in Z}s_j^h~&~&\forall\,h\in\left\{1,\dots,d_{\hat{T}}\right\},~h\neq k.
\end{align*}
In other words, equation~\eqref{r::sh} holds for $s^0$ and $d_{\hat{T}}-1$ pairwise distinct vectors from $\left\{s^h\,:\,h\in\{1,\dots,d_{\hat{T}}\}\right\}$. Thus, $d_{\hat{T}}$ vectors from $B(x_i)$ satisfy inequality~\eqref{facet2} with equality. Moreover, every connected component of $F(Z-x_i)$ contains at most one taxon from $Z$. Therefore, taxa $Y\setminus Z$ are in the same connected component of $F(Y-Z)$, i.e., $Z\in\mathcal{Z}(x_k,Y)$, if and only if $|Z|=d_{\hat{T}}$. Thus, inequality~\eqref{facet2} is facet-defining for $\mathcal{P}(T,\ell)$.\\

\noindent\underline{$\delta\to\delta +1$:} Without loss of generality $\mathcal{R}\left(x_i,Z\right)=\left\{x_1,\dots,x_{d}\right\}$ with $d\leq |Z|-1$ and $x_j=\text{argmax}~r\left(T_j(Z-x_i),E_{f|i}\right)$ for all $j\in\{1,\dots,d\}$. Then, by our induction hypothesis there exist $|Z_j|$ linearly independent vectors $A(x_j)=\left\{s^{j,0},\dots,s^{j,|Z_j|-1}\right\}\subset B(x_j)$ for some basis $B(x_j)=\left\{s^{j,0},\dots,s^{j,d_j}\right\}\in\mathcal{B}\left(T_j(Z-x_i),\ell\right)$, $d_j=d_{(T_j(Z-x_i),\ell)}$, such that 
\begin{align}\label{eq::IHj}
\sum_{x_l\in Z_j}N(x_l)\cdot\left(s_l-\,\text{LB}(x_l,T)\right)&=r\left(T_j(Z-x_i),E_{f|i}\right)~&~&\forall\,s\in A(x_j).
\end{align}
Consider $B(x_i)=\left\{s^0,s^1,\dots,s^{d_{\hat{T}}}\right\}\in\mathcal{B}(T,\ell)$ and denote the edge $\hat{e}$ defined by taxa $x=x_i$ and $y=x_j$ in Lemma~\ref{lem::Rsym} by $\hat{e}(i,j)$. Then, define a set of vectors
\begin{align*}
\mathcal{S}(Z)=\bigcup_{j=1}^d\left\{t^{j,0},\dots,t^{j,|Z_j|-1}\right\}\cup\{s^0\}\subset\mathcal{P}(T,\ell)
\end{align*}
by setting, for $j\in\{1,\dots,d\}$, $t^{j,0}=s^0$ except for 
\begin{align*}
t^{j,0}_l&=s_l^0-R\left(x_i,x_j,E_f\right)~&~&\forall\,x_l\in\hat{X}_{x_i,\hat{e}(i,j)}\cap Z,\\
t^{j,0}_l&=s_l^0+R\left(x_j,x_i,E_f\right)~&~&\forall\,x_l\in\hat{X}_{x_j,\hat{e}(i,j)}\cap Z
\end{align*}
and, for $j\in\{1,\dots,d\}$, $l\in\{1,\dots,|Z_j|-1\}$, $t^{j,l}=s^0$ except for 
\begin{align*}
t_p^{j,l}&=s_p^{j,l}~&~&\forall\,x_p\in Z_j.
\end{align*}
By definition, $s^0$ equals $s^{j,0}$ when constrained to the pendant subtree $T_j(Z-x_i)$. Hence, analogously as for our induction base $\delta =1$ (see equation~\eqref{r::sh}), we know from~\eqref{eq::IHj} and Proposition~\ref{prop::basis}.1 that
\begin{align*}
\sum_{x_j\in Z}N(x_j)\cdot\left(s_j^0-\,\text{LB}(x_j,T)\right)=r_i\left(T(Z),E_f\right)-\,\sum_{x_j\in Z}\text{LB}(x_j,Y,T).
\end{align*}
For $j\in\{1,\dots,d\}$, we have
\begin{align*}
&\sum_{x_l\in Z}N(x_l)\cdot t^{j,0}_l-\sum_{x_l\in Z}N(x_l)\cdot s^0_l\\
&=\sum_{x_l\in\hat{X}_{x_j,\hat{e}(i,j)}\cap Z}N(x_l)\cdot R\left(x_j,x_i,E_f\right)-\sum_{x_l\in\hat{X}_{x_i,\hat{e}(i,j)}\cap Z}N(x_l)\cdot R\left(x_i,x_j,E_f\right)\\
&=\left|\hat{X}_{x_j,\hat{e}(i,j)}\right|\cdot R\left(x_j,x_i,E_f\right)-\left|\hat{X}_{x_i,\hat{e}(i,j)}\right|\cdot R\left(x_i,x_j,E_f\right)=0
\end{align*}
where the second and third equality follow from our choice of $Y$ and Lemma~\ref{lem::Rsym}, respectively. Furthermore, for $l\in\{1,\dots,|Z_j|-1\}$,
\begin{align*}
&\sum_{x_p\in Z}N(x_p)\cdot\left(t^{j,l}_p-\,\text{LB}(x_p,T)\right)\\
&=\sum_{x_p\in Z\setminus Z_j}N(x_p)\cdot\left(s^0_p-\,\text{LB}(x_p,T)\right)+\sum_{x_p\in Z_j}N(x_p)\cdot\left(s^{j,l}_p-\,\text{LB}(x_p,T)\right)\\
&\substack{\eqref{eq::IHj}\\ =}\sum_{x_p\in Z\setminus Z_j}N(x_p)\cdot\left(s^0_p-\,\text{LB}(x_p,T)\right)+r\left(T_j(Z-x_i),E_{f|i}\right)\\
&\stackrel{\text{Proposition}~\ref{prop::basis}.1}{=}r_i\left(T(Z),E_f\right)-\,\sum_{x_j\in Z}\text{LB}(x_j,Y,T).
\end{align*}
Thus, all vectors in $\mathcal{S}(Z)$ satisfy inequality~\eqref{facet2} with equality. Next, we will show that $\mathcal{S}(Z)$ is a set of linearly independent vectors and subsequently show how to extend the set $\mathcal{S}(Z)$ to a set of $d_{\hat{T}}$ linearly independent vectors satisfying inequality~\eqref{facet2} with equality.

For $j\in\{1,\dots,d\}$, observe that the set $B_j=\left\{t^{j,0},\dots,t^{j,|Z_j|-1}\right\}$, when restricted to taxa in $Z_j$, differs from $A(x_j)=\left\{s^{j,0},\dots,s^{j,|Z_j|-1}\right\}$ only in the fact that $t_l^{j,0}=s_l^{j,0}+R(x_j,x_i,E_f)$ for all $x_l\in\hat{X}_{x_j,\hat{e}(i,j)}$. Recall that $s^{j,0}$ is the image of the diversity index defined by the allocation of edges in the calculation of $r(T_j(Z-x_i),E_{f|i})$. This means, $s_l^{j,0}$ is maximum with respect to diversity scores of taxon $x_l\in\hat{X}_{x_j,\hat{e}(i,j)}$ in $\mathcal{P}\left(T_j(Z-x_i),\ell\right)$. Using the maximality of $s_l^{j,0}$ we conclude that the restriction of $B_j$ to $Z_j$ has the same linear independence relations as $A(x_j)$. Moreover, for $h\in\{1,\dots,d\}$, $h\neq j$, $B_h$ restricted to taxa in $Z_j$ yields $|Z_h|$ copies of $s^0$ restricted to $Z_j$. In addition, the restriction of $s^0$ to taxa in $Z_j$ equals $s^{j,0}$ by definition of bases $B(x_i)$ and $B(x_j)$, and $t^{h,0}$ and $t^{j,0}$ are linear independent by construction. This means, $B_h\cup B_j$ is a set of linearly independent vectors. Thus, $\mathcal{S}(Z)\setminus\{s^0\}$ is a set of linearly independent vectors. Furthermore, $s^0$ is linearly independent from vectors $t^{1,0},\dots,t^{d,0}$ by construction. For $j\in\{1,\dots,d\}$, since the restriction of $s^0$ to taxa in $Z_j$ equals $s^{j,0}$, we know that $\{s^0\}\cup B_j\setminus\{t^{j,0}\}$ is a set of linearly independent vectors. Hence, we conclude that $B_j\cup\{s^0\}$ is a set of linearly independent vectors. Thus, in total we conclude that $\mathcal{S}(Z)$ is a set of 
\begin{align*}
1+\sum_{j=1}^{d}|Z_j|=|Z|
\end{align*}
linearly independent vectors. Since $Y\setminus Z$ are in the same connected component of $F(Y-Z)$, for $Z_{d+1}=Y\setminus Z$ and $x_{d+1}\in Z_{d+1}$, there exists a set of $|Z_{d+1}|$ linearly independent vectors $$A(x_{d+1})=\left\{s^{d+1,0},\dots,s^{d+1,|Z_{d+1}|-1}\right\}\subset B(x_{d+1})\in\mathcal{B}\left(T_{d+1}(Z-x_i),\ell\right).$$ For $l\in\{1,\dots,|Z_{d+1}|-1\}$, set $t^{d+1,l}=s^0$ except for
\begin{align*}
t_p^{d+1,l}&=s_p^{d+1,l}~&~&\forall\,x_p\in Y\setminus Z.
\end{align*}
Then, $\mathcal{S}(Z)\cup\left\{t^{d+1,1},\dots,t^{d+1,|Z_{d+1}|-1}\right\}$ is a set of $|Z|+|Y\setminus Z|-1=d_{\hat{T}}$ linearly independent vectors that satisfy inequality~\eqref{facet2} with equality. Thus, our claim follows.
\end{proof}

\begin{figure}[t]
\centering
\includegraphics[scale=0.4]{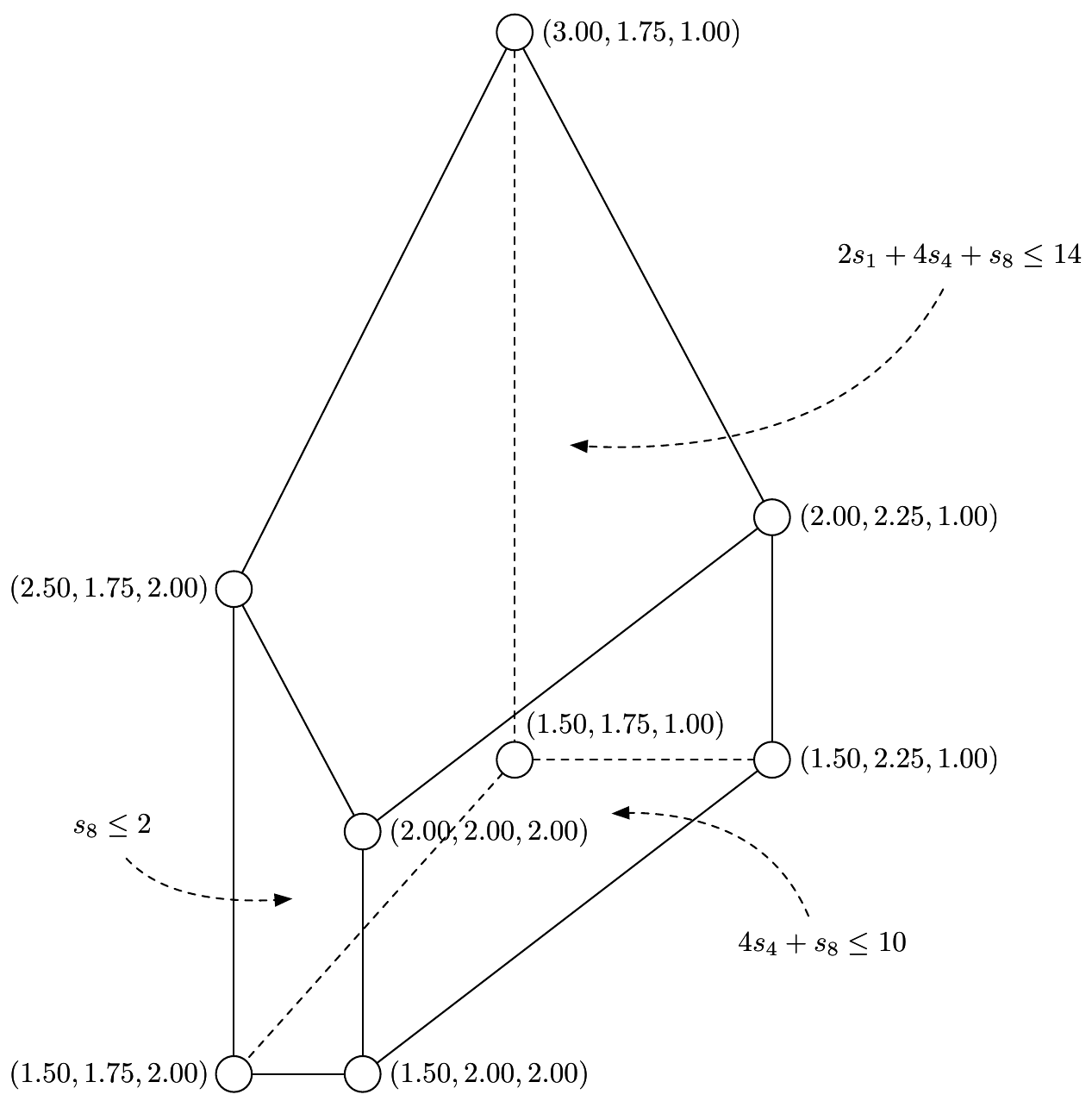}
\caption{The diversity index polytope $\mathcal{P}(T,\ell)$ for the rooted $X$-tree $T$ in Figure~\ref{fig::df2} and edge length function $\ell:E(T)\to\mathbb{R}_{\geq 0}$ with $\ell(e)=1$ such that diversity index scores $s\in\mathcal{P}(T,\ell)$ are projected onto vectors $(s_1,s_4,s_8)$ (using the fact that \CorrB{$\dim\mathcal{P}(T,\ell)=d_{(T,\ell)}=3$}). The polytope $\mathcal{P}(T,\ell)$ can be described by $6$ facets, $3$ of the form~\eqref{facet1} and $3$ of the form~\eqref{facet2}.}\label{fig::3dpoly}
\end{figure}

We illustrate Theorem~\ref{thm::facets} in Figure~\ref{fig::3dpoly}. Specifically, the following example details facets of form~\eqref{facet2}:
\begin{exmp}\label{exmp::facets}
Continuing Example~\ref{ex::basis1}, consider the rooted phylogenetic $X$-tree $(T,\ell)$ in Figure~\ref{fig::df2} and consider $Y=\{x_1,x_3,x_4,x_8\}$ as the first $d_{\hat{T}}+1$ coordinates in $B(x_1)$. Observe that $N(x_3)=N(x_8)=1$, $N(x_1)=2$ and $N(x_4)=4$, and that $x_k=x_3$ is the taxon which fits into Corollary~\ref{cor::valid} (and therefore fits into Theorem~\ref{thm::facets}.2). Let $Z=\{x_1,x_4,x_8\}\in\mathcal{Z}(x_3,Y)$. Then, we have
\begin{align*}
r\left(T(Z),E_f\right)=r_1\left(T(Z),E_f\right)-\sum_{x_j\in Z}\text{LB}\left(x_j,Y,T\right)=\ell(e_{13})+\ell(e_{15})+\ell(e_{16})=3.
\end{align*}
Hence, we know from Theorem~\ref{thm::facets}.2 that
\begin{align*}
\sum_{x_j\in Z}N(x_j)\cdot\left(s_j-\,\text{LB}(x_j,T)\right)=2(s_1-1.5)+4(s_4-1.75)+s_8-1&\leq r\left(T(Z),E_f\right)\\
\Leftrightarrow~~~2s_1+4s_4+s_8&\leq 14
\end{align*}
is a facet of $\mathcal{P}(T,\ell)$. Next, consider $Y$ as the first four coordinates in $B(x_4)$. Then, $x_k\in\{x_1,x_3\}$ fits into Corollary~\ref{cor::valid}. Hence, for $Z=\{x_4,x_8\}\in\mathcal{Z}(x_k,Y)$, we have $$r\left(T(Z),E_f\right)=r_4\left(T(Z),E_f\right)-\sum_{x_j\in Z}\text{LB}\left(x_j,Y,T\right)=\ell(e_{15})+\ell(e_{16})=2$$ and therefore Theorem~\ref{thm::facets}.2 yields facet
\begin{align*}
\sum_{x_j\in Z}N(x_j)\cdot\left(s_j-\,\text{LB}(x_j,T)\right)=4(s_4-1.75)+s_8-1&\leq r\left(T(Z),E_f\right)\\
\Leftrightarrow~~~4s_4+s_8&\leq 10.
\end{align*}
If instead we choose $Z=\{x_1,x_4\}\notin\mathcal{Z}(x_k,Y)$, then Theorem~\ref{thm::facets}.2 is not applicable but we know from Corollary~\ref{cor::valid} that $$r\left(T(Z),E_f\right)=r_4\left(T(Z),E_f\right)-\sum_{x_j\in Z}\text{LB}\left(x_j,Y,T\right)=\ell(e_{13})+\ell(e_{15})+\ell(e_{16})=3$$ and inequality
\begin{align}
\sum_{x_j\in \{x_1,x_4\}}N(x_j)\cdot\left(s_j-\,\text{LB}(x_j,T)\right)=2(s_1-1.5)+4(s_4-1.75)&\leq r\left(T(Z),E_f\right)\nonumber\\
\Leftrightarrow~~~2s_1+4s_4&\leq 13\label{ex::nofacet}
\end{align}
is valid. Indeed, inequality~\eqref{ex::nofacet} is not a facet because $2s_1+4s_4=13$ is the intersection of facets $2s_1+4s_4+s_8=14$ and $s_8=\,\text{LB}(x_8,T)=1$ (see Theorem~\ref{thm::facets}.1). 

Lastly, consider $Y$ as the first four coordinates in $B(x_8)$. Then, $x_k\in Y\setminus\{x_8\}$ fits into Corollary~\ref{cor::valid}. Hence, for $Z=\{x_8\}\in\mathcal{Z}(x_k,Y)$, we have $$r\left(T(Z),E_f\right)=r_8\left(T(Z),E_f\right)-\sum_{x_j\in Z}\text{LB}\left(x_j,Y,T\right)=\ell(e_{16})=1$$ and therefore Theorem~\ref{thm::facets}.2 yields 
\begin{align*}
\sum_{x_j\in Z}N(x_j)\cdot\left(s_j-\,\text{LB}(x_j,T)\right)=s_8-1&\leq r\left(T(Z),E_f\right)~~~\Leftrightarrow~~~s_8\leq 2.
\end{align*}
Figure~\ref{fig::3dpoly} illustrates the three facets from this example. The plane described by equation~\eqref{ex::nofacet} intersects with $\mathcal{P}(T,\ell)$ only in points $\alpha (3,1.75,1)+(1-\alpha)(2,2.25,1)$ for $\alpha\in[0,1]$.
\end{exmp}

We can generalize our observations in Example~\ref{exmp::facets} to prove that Theorem~\ref{thm::facets} gives a characterization of all facets of $\mathcal{P}(T,\ell)$. To this end, let $\mathcal{Z}(T,\ell)$ denote the set of subsets $Z\in\mathcal{Z}(x_k,Y)$, $x_i\in Z$, for which Theorem~\ref{thm::facets}.2 is applicable.

\begin{theorem}\label{thm::compact}
Let $\hat{T}=(T,\ell)$ be a rooted phylogenetic $X$-tree with $d_{\hat{T}}\geq 1$. Then, there exist $n+|\mathcal{Z}(T,\ell)|$ linear inequalities which constitute a minimal compact description of the polytope $\mathcal{P}(T,\ell)$.
\end{theorem}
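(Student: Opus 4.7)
The plan is to assemble the two families of facet-defining inequalities from Theorem~\ref{thm::facets} into a single list and then verify that the list is pairwise distinct, complete, and minimal. By Theorem~\ref{thm::facets}.1, the $n$ inequalities $s_i \geq \text{LB}(x_i,T)$ are facet-defining. Allowing the pair $(x_i, B(x_i))$ to range over $X \times \mathcal{B}(T,\ell)$, with $Y$ the taxa associated with the first $d_{\hat{T}}+1$ coordinates of $B(x_i)$, Theorem~\ref{thm::facets}.2 yields a facet-defining inequality of form~\eqref{facet2} for each $Z \in \mathcal{Z}(x_k,Y)$ with $x_i \in Z$ for which the hypothesis applies; by the definition of $\mathcal{Z}(T,\ell)$, collecting these inequalities over all choices produces exactly $|\mathcal{Z}(T,\ell)|$ facet-defining inequalities. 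Altogether, we obtain $n + |\mathcal{Z}(T,\ell)|$ facet-defining inequalities.

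Next I would verify that the listed inequalities are pairwise distinct. Each inequality of form~\eqref{facet1} has coefficient support on a single coordinate $s_i$, while each inequality of form~\eqref{facet2} has support determined by a subset $Z$ that, by construction of $\mathcal{Z}(T,\ell)$, is distinct; moreover the right-hand side $r(T(Z), E_{B(x_i)})$ distinguishes different $Z$. Hence the two families are disjoint and intra-family distinct.

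For sufficiency, let $P \subseteq \mathbb{R}^n$ be the intersection of the half-spaces defined by the $n + |\mathcal{Z}(T,\ell)|$ inequalities. Validity gives $P \supseteq \mathcal{P}(T,\ell)$, and both polytopes share the affine hull of $\mathcal{P}(T,\ell)$, which has dimension $d_{\hat{T}}$ by Proposition~\ref{prop::basis}.2. Since a convex polytope is uniquely determined within its affine hull by the intersection of its facet-defining half-spaces, the equality $P = \mathcal{P}(T,\ell)$ holds provided the list exhausts the facets of $\mathcal{P}(T,\ell)$. Minimality then follows immediately: each listed inequality defines a distinct $(d_{\hat{T}}-1)$-dimensional face, so removing any of them strictly enlarges $P$, and no shorter description exists.

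The main obstacle is the completeness step — arguing that no facet of $\mathcal{P}(T,\ell)$ has been missed by Theorem~\ref{thm::facets}. The natural approach is to take an arbitrary facet-defining inequality $\alpha^{\top} s \leq \beta$, examine the $d_{\hat{T}}$ affinely independent extreme points in $\text{ext}(T,\ell)$ on which it is tight (Proposition~\ref{prop::STlc1}), and show via the canonical-basis analysis of Proposition~\ref{prop::basis} and the exchange property of Corollary~\ref{cor::canonBasis} that $\alpha^{\top} s \leq \beta$ must coincide, up to a positive scalar, with either a lower-bound constraint of form~\eqref{facet1} or a maximal-allocation constraint of form~\eqref{facet2} indexed by some $Z \in \mathcal{Z}(T,\ell)$. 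This classification step is where the combinatorial structure of $\mathcal{Z}(T,\ell)$ — that $Y \setminus Z$ stays in a single connected component of $F(Y-Z)$ — is used in reverse to recover the correct index set from the support of $\alpha$.
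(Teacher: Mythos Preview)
Your proposal correctly assembles the facet list from Theorem~\ref{thm::facets} and correctly identifies completeness as the crux, but the completeness argument you sketch is not the one the paper uses, and as written it is only a plan, not a proof. You propose to start from an arbitrary facet-defining inequality $\alpha^{\top}s\leq\beta$, look at its tight extreme points, and reverse-engineer a set $Z\in\mathcal{Z}(T,\ell)$ via Proposition~\ref{prop::basis} and Corollary~\ref{cor::canonBasis}. That classification ``from scratch'' is substantially harder than what is needed, and you do not carry it out.

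The paper exploits that a complete (but redundant) half-space description is already available: by Corollary~\ref{cor::valid} (built on Proposition~\ref{prop::STlc2}), every facet of $\mathcal{P}(T,\ell)$ must appear among the inequalities~\eqref{facet2} for some $Z\subseteq Y\setminus\{x_k\}$ with $x_i\in Z$, together with the non-negativity constraints of the projection. So completeness is \emph{not} about recognising an unknown $\alpha$; it is about showing that the inequalities in that finite list with $Z\notin\mathcal{Z}(x_k,Y)$ are redundant. The paper does this by a direct combinatorial decomposition: if $Z\notin\mathcal{Z}(x_k,Y)$, let $W_k$ be the connected component of $x_k$ in $F(Y-Z)$; then $Y\setminus W_k\in\mathcal{Z}(x_k,Y)$, and the equality version of~\eqref{facet2} for $Z$ is the intersection of the facet for $Y\setminus W_k$ with lower-bound equalities~\eqref{facet1} on the remaining components. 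Hence inequality~\eqref{facet2} for such $Z$ is not a facet. This is the step you are missing, and it is exactly where the connectivity condition defining $\mathcal{Z}(x_k,Y)$ is used --- forwards, to split $Y\setminus Z$ into pieces, not backwards as you suggest.
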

\begin{proof}
First observe that the union of the descriptions of all projections of $\mathcal{P}(T,\ell)$ to $d_{\hat{T}}$ taxa in Corollary~\ref{cor::valid} contains all facets of $\mathcal{P}(T,\ell)$. Hence, for all $Z\subseteq Y\setminus\{x_k\}$, $x_i\in Z$, we need to prove or disprove that the inequalities in Corollary~\ref{cor::valid} are facet-defining for $\mathcal{P}(T,\ell)$. Theorem~\ref{thm::facets}.1 proves that the non-negativity constraints of the projected diversity index scores are facet-defining. In addition, if $Z\in\mathcal{Z}(x_k,Y)$, $x_i\in Z$, then we obtain facet-defining inequalities~\eqref{facet2} from Theorem~\ref{thm::facets}.2. Otherwise, $Z\notin\mathcal{Z}(x_k,Y)$, $x_i\in Z$, for any choice of $x_k$ which fits into Corollary~\ref{cor::valid}. In this case we show that
\begin{align}\label{cor::com::nofacet}
\sum_{x_j\in Z}N(x_j)\cdot\left(s_j-\,\text{LB}(x_j,T)\right)\leq r\left(T(Z),E_{B(x_i)}\right)
\end{align}
is not facet-defining for $\mathcal{P}(T,\ell)$. Let $W_k\subset Y\setminus Z$ be the taxa which are in the same connected component of $F(Y-Z)$ as $x_k$. Then, $Y\setminus W_k\in\mathcal{Z}(x_k,Y)$. Hence, we know from Theorem~\ref{thm::facets}.2 that inequality
\begin{align}\label{cor::com::facet}
\sum_{x_j\in Y\setminus W_k}N(x_j)\cdot\left(s_j-\,\text{LB}(x_j,T)\right)\leq r\left(T(Y\setminus W_k),E_{B(x_i)}\right)
\end{align}
is facet-defining for $\mathcal{P}(T,\ell)$. Furthermore, taxa $Y\setminus(Z\cup W_k)$ are the taxa in the connected components of $F(Y-Z)$ different from $W_k$. Let $X_1,\dots,X_p$ denote the taxa present in these components, i.e., $\bigcup_{j=1}^pX_j=Y\setminus(Z\cup W_k)$. For $j\in\{1,\dots,p\}$, let $T_j$ denote the connected component in $F(Y-Z)$ with leafset $X_j$. If $\mathcal{F}(T_j)=\emptyset$, then
\begin{align}\label{cor::com::nofacet2}
\sum_{x_l\in X_j}N(x_l)\cdot\left(s_l-\,\text{LB}(x_l,T)\right)\leq r\left(T_j,\emptyset\right)
\end{align}
holds with equality, induced by facets~\eqref{facet1}. Otherwise, we repeat our arguments for $T_j$ instead of $T$. Hence, by induction we conclude that inequality~\eqref{cor::com::nofacet2} is not facet-defining for $\mathcal{P}(T_j,\ell)$. Furthermore, equation~\eqref{cor::com::nofacet} is the intersection of equation~\eqref{cor::com::facet} and equations~\eqref{cor::com::nofacet2} for all $j\in\{1,\dots,p\}$. Thus, inequality~\eqref{cor::com::nofacet} is not facet-defining for $\mathcal{P}(T,\ell)$.
\end{proof}

\section{Conclusions and future research}\label{sec5}
We conclude from Theorem~\ref{thm::compact} that there exists a minimal compact description of the diversity index polytope of polynomial size. So if we characterize a selection criterion for a diversity index by a linear function, then the calculation of the optimal index under this selection criterion can be done in polynomial time by linear programming. 
We envisage two immediate applications 
within this framework, though more are likely. Both contain properties of diversity indices for which precise optimal solutions are unknown, but where finding optima would prove useful to conservation planners. 

The first concerns the extent to which sets of, say, $k$ species with the largest diversity index scores are able to represent the phylogenetic diversity present in the tree as a whole \cite{semple23, redding08}. The separation of these two aspects of phylogenetic diversity could be quantified using a linear difference function $\Delta$ of distinct measures involving diversity indices as our selection criterion. Because such two measures of diversity have different but justifiable underlying assumptions, conservation planners would prefer that they were aligned as much as possible. It is straightforward to calculate the difference $\Delta$ for a given diversity index.
\begin{figure}[t]
\centering
\includegraphics[scale=0.4]{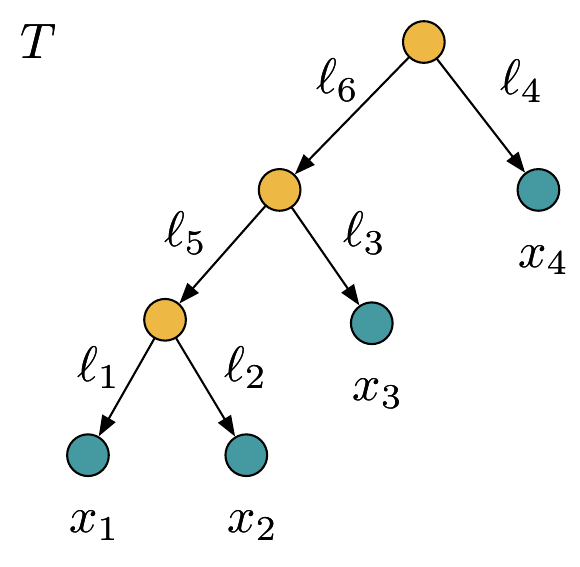}
\caption{A rooted phylogenetic $X$-tree on four leaves with edge lengths $\ell_1,\dots,\ell_5$.}\label{fig::Delta_example}
\end{figure}
For example, for the rooted phylogenetic $X$-tree $(T,\ell)$ in Figure~\ref{fig::Delta_example}, the diversity index scores of the Fair Proportion Index are given by
\begin{align*}
\left(s_1^{\text{FP}},\dots,s_4^{\text{FP}}\right)=\left(\ell_1+\frac{\ell_5}{2}+\frac{\ell_6}{2},\ell_1+\frac{\ell_5}{2}+\frac{\ell_6}{2},\ell(\ell_3)+\frac{\ell_6}{3},\ell_4\right).
\end{align*}
Then, for $x_i,x_j\in X=\{x_1,\dots,x_4\}$, 
\begin{align*}
\Delta(x_i,x_j)=\,\text{phylogenetic diversity of $\{x_i,x_j\}$ }-s_i^{\text{FP}}-s_j^{\text{FP}}
\end{align*}
could be a suitable choice for the linear difference function $\Delta$ on $k=2$ species. Subsequently, we can find $k$ species on a given tree that minimize $\Delta$. However, it is not immediately clear how one would choose a diversity index to minimize $\Delta$ without first knowing which $k$ species will be selected. Taking the perspective of robust optimization~\cite{ben09}, and using the diversity index polytope as a region of uncertainty, we can solve this problem while remaining agnostic to the selected set of $k$ species. In our small example the robust version of $\min\{\Delta(x_i,x_j)\,:\,x_i,x_j\in X\}$ would be
\begin{align*}
\max_{x_i,x_j\in X}\min_{s\in\mathcal{P}(T,\ell)}\,\text{phylogenetic diversity of $\{x_i,x_j\}$ }-s_i-s_j.
\end{align*} 

The second application involves the way that extinctions of species impact diversity index scores of survivors. The demise of a species results in the removal of that leaf and incident edge from the phylogenetic tree. For example, modelled by a stochastic birth-death process~\cite{mooers12}. In doing so, the diversity index scores of remaining leaves are recalculated to reflect the new tree structure. However, this recalculation can lead to quite drastic changes in index scores. Of particular concern, a ranking based on these scores can sometimes become completely reversed after certain extinctions, causing great upset to conservation priorities~\cite{gumbs18}. Finding optimal diversity indices that best preserve rankings in the presence of extinction events would be a key benefit of the present construction. This problem requires further structure that must be added to the diversity index polytope to capture the dependencies between species introduced by a ranking. In this context, approaches like the rotation method introduced by Bolotashvili and Kovalev~\cite{bolotashvili88} to study rankings in the linear ordering polytope~\cite{bolotashvili99} can inform the study of diversity rankings.  

Overall, the results in this articles offer a way of rigorously codifying $\mathcal{P}(T,\ell)$. This opens the door for an analysis of the properties of diversity indices on a broad scale unlike what has previously been attained. In other words, our results introduce the methodological advantages of polyhedral combinatorics to solve problems in conversation biology.

\section{Acknowledgements}
We thank the anonymous reviewers for their detailed feedback which helped to improve the article.
 
\appendix
\section{Proof of Proposition~\ref{prop::STlc1}}\label{proof::STLc1}~\\
Statement of Proposition~\ref{prop::STlc1}: let $\hat{T}=(T,\ell)$ be a rooted phylogenetic $X$-tree and let $E_f\in\mathcal{F}(T)$. Then, the set of extreme points of $\mathcal{P}(T,\ell)$ is given by
\begin{align*}
\mathrm{ext}(T,\ell)=&\left\{\Gamma_Tl\,:\,\text{choose a maximum or minimum value}\right.\\
&~~~~~~~~~\left.\text{for every free choice in }\Gamma_T(e),\,e\in E_f\right\}.
\end{align*}
\begin{proof}
By definition, ext$(T,\ell)\subseteq \mathcal{P}(T,\ell)$. If $d_{\hat{T}}=0$, then $|\mathcal{P}(T,\ell)|=1$ and therefore our claim holds. Hence, assume $d_{\hat{T}}\geq 1$. Let $s=\Gamma_Tl\in\text{ext}(T,\ell)$. Suppose, with a view to contradiction, that $s$ is not an extreme point of $\mathcal{P}(T,\ell)$. Since $\mathcal{P}(T,\ell)$ is convex by Proposition~\ref{prop::MS}.2, there exist $\Gamma_T^jl\in \mathcal{P}(T,\ell)$, $\Gamma_T^j\neq\Gamma_T$, $\lambda_j>0$, $j\in\{1,\dots,k\}$, $k\geq 2$, such that $$\Gamma_Tl=\sum_{j=1}^k\lambda_j\Gamma_T^jl,~~~\sum_{j=1}^k\lambda_j=1.$$ Hence,
\begin{align*}
\gamma(x_i,e)&=\sum_{j=1}^k\lambda_j\gamma^j(x_i,e)~&~&\forall\,i\in\{1,\dots,n\}.
\end{align*}
For $e\in E_f$, there exists a taxon $x\in X$ such that $\gamma(x,e)=0$. To see this, observe that the minimum value for any free choice in $\Gamma_T(e)$ is zero and the maximum value for any free choice of $\gamma(y,e)$, $y\in X$, does imply $\gamma(z,e)=0$ for at least one $z\in X$, $z\neq y$. This means $\sum_{j=1}^k\lambda_j\gamma^j(x,e)=0$. Since $\lambda_j>0$ for all $j\in\{1,\dots,n\}$, we obtain $\gamma^j(x,e)=0$. Therefore, for $j\in\{1,\dots,n\}$, $\Gamma_T(e)=\Gamma_T^j(e)$. However, for $j\in\{1,\dots,n\}$, we assumed $\Gamma_T\neq\Gamma_T^j$, leading to a contradiction. Thus, $s$ is an extreme point of $\mathcal{P}(T,\ell)$.

Conversely, let $s=\Gamma_Tl\in \mathcal{P}(T,\ell)\setminus\text{ext}(T,\ell)$. This means, for at least one edge $e\in E_f$, no free choice in $\Gamma_T(e)$ is maximum or minimum. Let $\Gamma_T^1l,\Gamma_T^2l\in \mathcal{P}(T,\ell)$ such that, for some fixed $x\in X$, $\gamma^1(x,e)$ and $\gamma^2(x,e)$ are given by a maximum and minimum free choice, respectively, and $\Gamma_T^1(e')=\Gamma_T^2(e')=\Gamma_T(e')$ for all $e'\in E(T)\setminus\{e\}$. Hence, for some $\alpha >0$, $\Gamma_T(e)=\alpha\Gamma_T^1(e)+(1-\alpha)\Gamma_T^2(e)$, meaning $s$ is a convex combination of two diversity index scores which are pairwise distinct from $s$. Thus, $s$ is not an extreme point of $\mathcal{P}(T,\ell)$ because $\mathcal{P}(T,\ell)$ is convex.
\end{proof}

\section{Proof of Lemma~\ref{lem::rank}}\label{proof::rank}~\\
Statement of Lemma~\ref{lem::rank}: for an integer $m\geq 2$, 
\begin{itemize}
\item let $A\in\mathbb{R}^{m\times m}$ such that $a_{ij}=a_{ik}> 0$ for all $i,j,k\in\{1,\dots,m\}$,
\item let $B\in\mathbb{R}^{m\times m}$ such that $b_{11}\geq 0$, $b_{ii}>0$ for $i\in\{2,\dots,m\}$, and $B$ is zero in non-diagonal entries except for at most $m-1$ negative entries and at most one negative entry in both row $i$ and column $i$, $i\in\{1,\dots,m\}$.
\end{itemize}
Then, $A+B$ has full rank.
\begin{proof}~
\begin{description}
\item[Case 1:] $B$ is invertible. Since we can write $A=(a_{11},\dots,a_{mm})(1,\dots,1)^T$, we get
\begin{align*}
\det(A+B)=\det(B)\cdot(1+(1,\dots,1)^TB^{-1}(a_{11},\dots,a_{mm}))
\end{align*}
from the matrix determinant lemma. Hence, we can prove $\det(A+B)\neq 0$ by showing $B^{-1}(a_{11},\dots,a_{mm})\in\mathbb{R}_{>0}^m$.  Suppose by contradiction that the unique solution $x\in\mathbb{R}^m$ to the system $Bx=(a_{11},\dots,a_{mm})$ is non-positive. Observe that rows $i$ of $B$ of the form $b_{ii}x_i=a_{ii}>0$ cannot occur for $x_i\leq 0$. Hence, there exists a negative non-diagonal entry in each row of $B$. However, $B$ contains at most $m-1$ negative entries, leading to a contradiction. Therefore, all entries of vector $B^{-1}(a_{11},\dots,a_{mm})$ are positive. Thus, $A+B$ has full rank.
\item[Case 2:] $B$ is singular. Then, rank$(B)\leq m-1$. We will show that $\dim(\ker(B))\leq 1$.\linebreak Then, by the rank-nullity theorem rank$(B)=m-1$ follows. To this end, consider $x\in\ker(B)$. Without loss of generality $b_{11}>0$ and we write $B=D-C$ for a diagonal matrix~$D$ with non-negative entries and a sparse matrix~$C$ with at most one non-zero entry per row and column and at most $m-1$ non-zero entries in total. Then, $Dx=Cx$ or equivalently, for all $i\in\{1,\dots,m\}$,
\begin{align}\label{def::x}
x_{i}=\begin{cases}
\frac{C_{ij}}{D_{ii}}x_j &\text{if there exists $j\in\{1,\dots,m\}$ such that $C_{ij}\neq 0$},\\
0 &\text{otherwise.}
\end{cases}
\end{align}
Let $i_1,\dots,i_{k}$ denote all indices $i\in\{1,\dots,m\}$ for which there exists $j\in\{1,\dots,m\}$ such that $C_{ij}\neq 0$ and let $j_1,\dots,j_k$ denote the corresponding indices $j$. Since $C$ has the same non-zero-entry-pattern as a permutation matrix except for some all-zero rows and columns, it follows that $x\neq 0$ is possible only if
\begin{align*}
\prod_{i_h\in\{i_1,\dots,i_k\}}\frac{C_{i_hj_h}}{D_{i_hi_h}}=1.
\end{align*}
This means, $x\neq 0$ is unique if it exists, i.e., $\dim(\ker(B))\leq 1$. We conclude that rank$(B)=m-1$. \CorrB{In addition, we observe that $x$ can be written as $(x_1,c_2x_1,\dots,c_mx_1)$ for non-negative scalars $c_i$, $i\in\{2,\dots,m\}$, which are defined by~\eqref{def::x}. Since $x\in\ker(B)$ and $B$ is singular, we know that $x\neq 0$. Hence, it follows that $x_1\neq 0$. Therefore, $(A+B)x=Ax=(a_{11},a_{21},\dots,a_{m1})\cdot\sum_{i=1}^mx_i\neq 0$. Thus, $\ker(A+B)=\{0\}$, i.e., $A+B$ has full rank.}
\end{description}
\end{proof}


\begin{thebibliography}{10}

\bibitem{EDGEofExistence_2017}
Aug 2017, \url{https://www.edgeofexistence.org/projects/}.

\bibitem{ben09}
{\sc A.~Ben-Tal, A.~Nemirovski, and L.~E. Ghaoui}, {\em Robust Optimization},
  Princeton University Press, 2009.

\bibitem{bolotashvili88}
{\sc G.~Bolotashvili and M.~Kovalev}, {\em The partial order polytope},
  Proceedings of the VIII conference "Problems in theoretical cybernetics",
  (1988).

\bibitem{bolotashvili99}
{\sc G.~Bolotashvili, M.~Kovalev, and E.~Girlich}, {\em New facets of the
  linear ordering polytope}, SIAM Journal on Discrete Mathematics, 12 (1999),
  pp.~326--336.

\bibitem{semple23}
{\sc M.~Bordewich and C.~Semple}, {\em Quantifying the difference between
  phylogenetic diversity and diversity indices}, Journal of Mathematical
  Biology, 88 (2024), pp.~1--25.

\bibitem{crozier97}
{\sc R.~H. Crozier}, {\em Preserving the information content of species:
  genetic diversity, phylogeny, and conservation worth}, Annual Review of
  Ecology and Systematics, 28 (1997), pp.~243--268.

\bibitem{daly2018ecological}
{\sc A.~J. Daly, J.~M. Baetens, and B.~De~Baets}, {\em Ecological diversity:
  measuring the unmeasurable}, Mathematics, 6 (2018), p.~119.

\bibitem{faith92}
{\sc D.~Faith}, {\em Conservation evaluation and phylogenetic diversity},
  Biological Conservation, 61 (1992), pp.~1--10.

\bibitem{fischer23}
{\sc M.~Fischer, A.~Francis, and K.~Wicke}, {\em Phylogenetic diversity
  rankings in the face of extinctions: {T}he robustness of the fair proportion
  index}, Systematic Biology, 72 (2023), pp.~606--615.

\bibitem{spicer13}
{\sc K.~J. Gaston and J.~I. Spicer}, {\em Biodiversity: {A}n introduction},
  John Wiley \& Sons, Oxford, 2013.

\bibitem{gumbs18}
{\sc R.~Gumbs, C.~L. Gray, O.~R. Wearn, and N.~R. Owen}, {\em Tertapods on the
  {EDGE}: Overcoming data limitations to identify phylogenetic conservation
  priorities}, PLoS One, 13 (2018), p.~e0194680.

\bibitem{hartmann06}
{\sc K.~Hartmann and M.~Steel}, {\em Maximizing phylogenetic diversity in
  biodiversity conservation: greedy solutions to the noah's ark problem},
  Systematic Biology, 55 (2006), pp.~644--651.

\bibitem{iucn06}
{\sc IUCN}, {\em Red list of threatened species}, http://www.iucnredlist.org,
  (2006).

\bibitem{manson24}
{\sc K.~Manson}, {\em The robustness of phylogenetic diversity indices to
  extinctions}, Journal of Mathematical Biology, 89 (2024), p.~5.

\bibitem{DIopt1}
{\sc K.~Manson and M.~Steel}, {\em Spaces of phylogenetic diversity indices:
  combinatorial and geometric properties}, Bulletin of Mathematical Biology, 85
  (2023).

\bibitem{mooers12}
{\sc A.~Mooers, O.~Gascuel, T.~Stadler, H.~Li, and M.~Steel}, {\em Branch
  lengths on birth-death trees and the expected loss of phylogenetic
  diversity}, Systematic Biology, 61 (2012), pp.~195--203.

\bibitem{moulton2024phylogenetic}
{\sc V.~Moulton, A.~Spillner, and K.~Wicke}, {\em Phylogenetic diversity
  indices from an affine and projective viewpoint}, Bulletin of Mathematical
  Biology, 86 (2024), p.~103.

\bibitem{myers89}
{\sc N.~Myers}, {\em Conservation for the Twenty-first Century}, no.~42, Oxford
  University Press, New York, 1989, ch.~A major extinction spasm: predictable
  and inevitable?, p.~49.

\bibitem{redding08}
{\sc D.~W. Redding, K.~Hartmann, A.~Mimoto, D.~Bokal, M.~DeVos, and A.~{\O}.
  Mooers}, {\em Evolutionarily distinctive species often capture more
  phylogenetic diversity than expected}, Journal of theoretical biology, 251
  (2008), pp.~606--615.

\bibitem{ricotta2005through}
{\sc C.~Ricotta}, {\em Through the jungle of biological diversity}, Acta
  biotheoretica, 53 (2005), pp.~29--38.

\bibitem{Singh02}
{\sc J.~S. Singh}, {\em The biodiversity crisis: a multifaceted review},
  Current Science, 82 (2002), pp.~638--647.

\bibitem{swenson2018phylogenetic}
{\sc N.~G. Swenson and S.~J. Worthy}, {\em Phylogenetic resolution and metrics
  of biodiversity and signal in conservation}, in Phylogenetic diversity:
  applications and challenges in biodiversity science, Springer, 2018,
  pp.~93--110.

\bibitem{tucker17}
{\sc C.~M. Tucker, M.~W. Cadotte, S.~B. Carvalho, T.~J. Davies, S.~Ferrier,
  S.~A. Fritz, R.~Grenyer, M.~R. Helmus, L.~S. Jin, A.~O. Mooers, and
  S.~Pavoine}, {\em A guide to phylogenetic metrics for conservation, community
  evology and macroecology}, Biological Reviews, 92 (2017), pp.~698--715.

\bibitem{wright90}
{\sc R.~I. Vane-Wright, C.~J. Humphries, and P.~H. Williams}, {\em What to
  protect? systematics and the agony of choice}, Biological Conservation, 55
  (1991), pp.~235--254.

\end{thebibliography}
\end{document}